\DeclareSymbolFontAlphabet{\mathbbm}{bbold}
\DeclareSymbolFontAlphabet{\mathbb}{AMSb}%
\DeclareMathAlphabet{\mathmybb}{U}{bbold}{m}{n}
\newcommand*{\bbOmega}{\mathmybb{\Omega}}
\newtheorem{thm}{Theorem}[section]
 \newtheorem{cor}[thm]{Corollary}
 \newtheorem{lem}[thm]{Lemma}
 \newtheorem{prop}[thm]{Proposition}
 \theoremstyle{definition}
 \newtheorem{defn}{Definition}[section]
 \theoremstyle{remark}
 \newtheorem{rem}{Remark}[section]
  \newtheorem*{trem*}{Draft remark}
 \newtheorem{ex}[thm]{Example}
 \numberwithin{equation}{section}
\def\al{\alpha}
\def\th{\theta}
\def\Th{\Theta}
\def\vphi{\varphi}
\def\la{\lambda}
\def\ga{\gamma}
\def\Ga{\Gamma}
\def\ka{\ka}
\def\Om{\Omega}
\def\si{\sigma}
\def\De{\Delta}
\def\ka{\kappa}
\def\om{\omega}
\def\epb{\bbespilon}
\def\mub{\bbmu}
\DeclareMathOperator{\im}{Im}
\DeclareMathOperator{\re}{Re}
\DeclareMathOperator{\dom}{dom}
\DeclareMathOperator{\ran}{ran}
\DeclareMathOperator{\Mul}{\mathfrak{m}}
\DeclareMathOperator{\Gr}{Gr}
\DeclareMathOperator{\Ncone}{\bbomega}
\DeclareMathOperator{\curl}{curl}
\DeclareMathOperator{\curlm}{\mathbf{curl}}
\DeclareMathOperator{\Div}{div}
\DeclareMathOperator{\grad}{\mathbf{grad}}
\DeclareMathOperator{\curln}{\curlm_0}
\DeclareMathOperator{\Grad}{\mathbf{Grad}}
\DeclareMathOperator{\Curl}{\mathbf{Curl}}
\DeclareMathOperator{\Dev}{\mathbf{\De}_\mathrm{t}^{\pa \Om}}
\DeclareMathOperator{\Var}{\mathrm{Var}}
  \def\CC{\mathbb{C}}  
\def\FF{\mathbb{F}} \def\GG{\mathbb{G}} \def\HH{\mathbb{H}} \def\II{\mathbb{I}}
\def\KK{\mathbb{K}} \def\LL{\mathbb{L}} \def\NN{\mathbb N}
\def\PP{\mathbb{P}}  \def\RR{\mathbb{R}} \def\SS{\mathbb{S}}
 \def\VV{\mathbb{V}}\def\ZZ{\mathbb{Z}}
\def\Ff{\mathfrak{F}}
\newcommand{\x}{\mathbf{x}}
\def\pa{\partial}
\def\ii{\mathrm{i}}
\def\uph{\upharpoonright}
\def\wh{\widehat}
\def\wt{\widetilde}
\def\<{\langle}
\def\>{\rangle}
\def\imb{\hookrightarrow}
\def\ver{\big\arrowvert\!}
\def\A{\mathcal{A}}
\def\E{\mathbf{E}}
\def\H{\mathbf{H}}
\def\n{\mathbf{n}}
\def\af{\mathfrak{a}}
\def\bfr{\mathfrak{b}}
\def\ef{\mathfrak{e}}
\def\hf{\mathfrak{h}}
\def\Kc{\mathcal{K}}
\def\Tc{\mathcal{T}}
\def\uu{\mathbf{u}}
\def\vv{\mathbf{v}}
\def\ww{\mathbf{w}}
\def\Hc{\mathfrak{X}}
\def\Z{z}
\def\Zc{\mathcal{Z}}
\def\M{\mathcal{M}}
\def\Hs{\mathfrak{H}}
\def\A{\mathcal{A}}
\def\LLt{\LL^2_{\mathrm{t}} (\pa \Om)}
\def\LLT{\LL^2_{\mathrm{t}}}
\def\loc{\mathrm{loc}}
\def\sym{\mathrm{sym}}
\def\comp{\mathrm{comp}}
\def\imp{\mathrm{imp}}
\def\epr{\epb}
\def\Gb{G}
\def\G{\mathcal{G}}
\def\Yc{\mathfrak{Y}}
\def\Sop{S_{-,+}}
\def\Soin{S_{+,-}}
\def\Sdiv{\mathcal{S}_\ga}
\def\U{\mathcal{U}}
\def\Upi{\U_\pi}
\def\Uop{U}
\def\Himp{\HH_{\imp}}
\def\Fr{\mathrm{F}}
\def\KN{\mathrm{K}}
\def\Es{\mathfrak{E}}
\def\Lc{\mathcal{L}}
\def\Cay{\mathrm{Cay}}
\def\pOmp{[\pa \Om]_{\mathrm{p}}}
\def\pOma{[\pa \Om]_{\mathrm{a}}}
\begin{document}
\title{M-dissipative boundary conditions and boundary tuples for Maxwell operators} 
\author{}
\date{}
\maketitle

\vspace{-8ex}
{ \center{ \large
Matthias Eller$^\text{ a}$ and Illya M. Karabash$^\text{ b,c,*}$\\[3ex]
  }
}  
  
  {\small \noindent
$^{\text{a}}$ Department of Mathematics and Statistics, Georgetown University, Washington, DC 20057 \\[1mm]  
$^{\text{b}}$ Fakultät für Mathematik, TU Dortmund, Vogelpothsweg 87, 44227 Dortmund, Germany\\[1mm]
 $^{\text{c}}$ Institute of Applied Mathematics and Mechanics of NAS of Ukraine,
Dobrovolskogo st. 1, Slovyans'k 84100, Ukraine\\[0.2ex]
$^{\text{*}}$ Corresponding author: i.m.karabash@gmail.com\\[.2ex]
E-mails: mme4@georgetown.edu, i.m.karabash@gmail.com
}

\vspace{2ex}

\begin{abstract}
For Maxwell operators $(\E,\H) \to (\ii \epb^{-1} \nabla\times \H, -\ii \mub^{-1} \nabla \times \E)$ in Lipschitz domains, we describe all m-dissipative boundary conditions and apply this result to generalized impedance and Leontovich boundary conditions including the cases of singular, degenerate, and randomized impedance coefficients. 
To this end we construct Riesz bases in the trace spaces associated with the $\curlm$-operator and introduce a modified version of boundary triple adapted to the specifics of Maxwell equations, namely,
to the mixed-order duality of the related trace spaces.  This provides a translation of the problem to operator-theoretic settings of abstract Maxwell operators. In particular, we show that Calkin reduction operators are naturally connected with Leontovich boundary conditions and provide an abstract version of impedance boundary condition applicable to other types of wave equations. Taking  
Friedrichs and Krein-von Neumann extensions of related boundary operators, it is possible to associate m-dissipative Maxwell operators to arbitrary non-negative measurable impedance coefficients. 
\end{abstract}

\vspace{1ex}
\noindent
MSC-classes: 
35Q61 
47B44 
35F45   
35J56   
78A25  
58J90 
34G10 
\\[0.5ex]
Keywords:  m-accretive operators, absorbing boundary conditions, random boundary conditions, electromagnetic field, Maxwell system, spaces of boundary values, boundary triple, abstract impedance boundary condition, Hodge decomposition, mixed-oder duality 

\tableofcontents

\section{\label{s:Int} Introduction}


The evolution of the electromagnetic field in a domain $\Omega \subset \RR^3$ is governed by Maxwell's equations
\[ \quad
  \epr e_t = \nabla \times h, \qquad \mub h_t =- \nabla\times e,
\]
where the electric field $e=e(t,\x)$ and the magnetic field $h=h(t,\x)$ are 
$\CC^3$-valued functions. The $3\times 3$-matrix-functions $\mub=\mub(\x)$ and $\epr=\epr(\x)$ are the magnetic permeability and electric permittivity, respectively, which we suppose to be  time-independent. 
Assuming that $e(t,\x)=e^{-i\kappa t}\E(\x)$, $h(t,\x) = e^{-i\kappa t}\H(\x)$, the vector fields $\E$ and $\H$ satisfy the time-harmonic Maxwell equations \[
  \kappa \epr \E =\mathrm{i}\nabla \times \H, \quad \kappa  \mub \H=- \mathrm{i} \nabla \times \E.
\]
These equations can be understood as an eigenvalue problem for the Maxwell operator $\{\E,\H\} \to \{\ii \epb^{-1} \nabla\times \H, -\ii \mub^{-1} \nabla \times \E\}$. 

The primary goal of this work is to determine all boundary conditions imposed on the boundary $\pa \Omega$ of $\Om$ which make this operator $m$-dissipative. The second goals is to minimize the assumptions on the regularity and topology of $\pa \Omega$ and on the regularity of specific coefficients in boundary conditions and in Maxwell's equations. We apply our results to various classes of impedance, Leontovich, and mixed boundary conditions. 

In particular, the only our assumption on $\Omega$ is that it is a Lipschitz domain, i.e., $\Omega$ is an nonempty open connected bounded set with a Lipschitz continuous boundary $\pa \Om$ \cite{M03} (in particular, 
$\pa \Om$ may consist of finitely many connected components, which are not necessarily simply-connected). The constitutive tensor-fields $\mub (\x)$ and $\epr (\x)$ are represented by arbitrary uniformly positive definite and essentially bounded $3\times3$-matrix-functions.

To achieve the above aims, we modify  various tools of the operator extension theory and introduce abstract versions of Maxwell operators. With the use of the theory of differential operators on Lipschitz manifolds and corresponding Hodge decompositions, the abstract results written in terms of generalized boundary tuples are translated then to the results on concrete boundary conditions.

M-dissipative boundary conditions naturally appear in the theory of linear evolution equations because they ensure well-posedness and uniform stability. More precisely, an operator $T$ in a Hilbert space $\Hc$ is m-dissipative exactly when $(-\ii)T$ is a generator of strongly continuous semigroup of contractions \cite{P59}. This provides an elegant semigroup approach to
dynamic Maxwell equations and to modeling of leaky optical cavities. Note that we use the Physics conventions \cite{E12} that put the numerical range of a dissipative operator into the closed lower complex half-plane $\overline{\CC}_-$, and that in the PDE theory a related result is well-known in a more general context of Banach spaces under the name of the Lumer-Phillips theorem.  

While a general description m-dissipative boundary conditions for Maxwell systems has not been available even in the case of smooth $\pa \Om$, some classes of selfadjoint and m-dissipative boundary conditions for the time-harmonic Maxwell equations are well known,  for example, the  condition $\n \times \E =0$ which, roughly speaking, models the boundary as a perfect conductor.  Here $\n$ is the exterior normal vector field of $\Omega$ along $\pa \Om$. This boundary condition leads to a self-adjoint operator \cite{BS87,M03,ACL17}.
(Rigorously, $\n \times \E =0$ have to be supplemented by $\n \cdot (\mub \H  )=0$ to obtain the perfect conductor condition). 

 Another physical relevant example is the \emph{Leontovich boundary condition} \cite{LL84,K94,YI18}  $\n\times \E = \Z (\x) \pi_\top (\H)$ where the \emph{impedance coefficient} $\Z$ is a complex valued function on $\pa \Om$ with non-negative real part and $\pi_\top (\H)= \n\times (\H\times \n)$ is the tangential component of the magnetic field $\H$ on $\partial\Omega$.  Kapitonov   \cite{K94} showed how to assign naturally an m-dissipative Maxwell operator to this condition as long as $\pa \Om$ is smooth and $\Z \in C^1(\partial\Omega)$. On the other hand Lagnese and Leugering \cite[Lemma 7.2.2.1]{LL04} using the density result of Belgacem et al. \cite{BBCD97} proved that as long as $\Z \in L^\infty(\partial\Omega,\mathbb{R})$ is strictly positive on $\partial\Omega$, the condition $\n\times \E = \Z (\x) \pi_\top (\H)$ produces an m-dissipative operator  (formally, $\pa \Om$ in \cite{LL04} is piecewise smooth, but actually the proof  can be easily adapted to Lipschitz domains).
The case of constant impedance $\Z \equiv \alpha>0$ is sometimes referred to as the transparent or absorbing boundary condition since it is an approximation of the Silver-Müller radiation condition.   

Of particular interest are \emph{mixed boundary conditions} that in the context of this paper always understood as the mix of the conservative condition $\n \times \E =0$ on a part $\pOmp$ of the boundary with an  absorbing or Leontovich boundary condition on the rest $\pOma = \pa \Om \setminus \pOmp$
(in other papers, the same name is often used also for other types of mixed conditions).

If Kapitonov's  regularity assumptions \cite{K94} are not fulfilled, the question of m-dissipativity of mixed boundary conditions and well-posedness of associated Maxwell systems become substantially more difficult \cite{M03,ACL17}. Certain results are obtained under the assumptions that the impedance coefficient on $\pOma$ is uniformly strictly positive and bounded and, additionally,  $\Om$ and $\pOma$  are `good enough' (in particular, the boundary of $\pOma$ is piecewise smooth) and have no certain undesirable interplays with possible ``pathologies'' of $\pa \Om$ (see the discussion in the recent monograph \cite[Section 5.1.2]{ACL17}).
As a particular applications of our general results we show how to approach the m-dissipativity for arbitrary measurable subsets $\pOma$ of the Lipschitz manifold $\pa \Om$ and arbitrary measurable nonnegative $\Z (\cdot)$, see Section \ref{s:discussion}.
 
 Our interest is motivated by contemporary studies in radiophysics, photonics, and optical engineering that involve singular structures (like fractal antennas \cite{LM12}) or modeling of leaky optical cavities \cite{M03,MSG17}, which is needed, in particular, for the rigorous formulation for Maxwell systems of nonselfadjoint eigenvalue optimization problems   \cite{CZ94,K13a,AK17,MSG17,EK21}.
In photonics context, leaky cavities are
 surrounded by a medium with an uncertain structure, e.g., due to uncertain coupling or random fabrication errors \cite{MSG17}. The latter rises a natural question of an accurate randomization of dissipative boundary conditions (see Section \ref{s:discussion}).


Additional conditions of type  $\n \cdot (\mub \H ) =0$ are closely connected with the `divergence-free' conditions $\Div (\epb(\x) \E (\x)) = 0$,  $\Div (\mub(\x) \H (\x)) = 0$, and corresponding Weyl decompositions, which in the m-dissipative case are treated briefly by Remark \ref{r:div=0} (cf. the selfadjoint case in \cite{BS87}). However, the condition $\n \cdot (\mub \H  )=0$ on $\pa \Om$ (or on a part of $\pa \Om$) remains outside  the scope of the paper.

With the `divergence-free' conditions the Maxwell system
becomes elliptic, which makes it possible, in the case where $\pa \Om$ and the coefficients  are smooth enough, to apply the theory of boundary value problems for elliptic equations \cite{G09}. However, photonics and optical engineering applications, as well as numerical methods, often involve  polyhedral domains and discontinuous coefficients, which makes the use of the elliptic theory difficult.       

\textbf{Notation and terminology.} 
If a linear space $\wt \Hc$ is a subset of a linear space $\Hc$, we say that $\wt \Hc$ is a subspace of $\Hc$. If 
a subspace $\wt \Hs$ of a Hilbert space $\Hs$ is closed in $\Hs$ and we want to emphasize this, we say that $\wt \Hs$ is a closed or Hilbert subspace of $\Hs$; in this case, $\Hs \ominus \wt \Hs$ is the orthogonal complement of $\wt \Hs$ in $\Hs$.  The orthogonal sum of Hilbert spaces $\Hs_{1,2}$,
is denoted by $\Hs_1 \oplus \Hs_2$.
A norm $\| \cdot \|_{\wt \Hs}$ on a linear space $\wt \Hs$ is called Hilbertian if there is an inner product $(\cdot|\star)_{\wt \Hs}$ defined on $\wt \Hs \times \wt \Hs$ such that $\| \cdot \|_{\wt \Hs}^2 = (\cdot|\cdot)_{\wt \Hs}$. 

Let $\Hc_j$, $j=1, \dots, 4$, be linear spaces.
An operator $A: \dom A \subseteq \Hc_1 \to \Hc_2$ from $\Hc_1$ to $\Hc_2$ is understood as a linear mapping defined on a subspace $\dom A \subseteq \Hc_1$ ($\dom A$ is called the domain of $A$) and having the range $\ran A =\{ A f : f \in \dom A\}$ in $\Hc_2$.  If $\dom A = \Hc_1$ and we want to emphasize this, we write $A: \Hc_1  \to \Hc_2$. 
A restriction $B=A \uph_{\wt \Hs}$ of an operator $A$ to $\wt \Hs \subseteq \dom A$ is an operator $B: \wt \Hs \subseteq \Hc_3 \to \Hc_4$ defined by $Bf =Af$, $f \in \wt \Hs$, where $\wt \Hs = \dom B$ is a subspace of $\Hc_3$ (and so $\dom B \subseteq \Hc_1 \cap \Hc_3$) and  the image $\ran B$ is a subspace of $\Hc_4$ (and so $\ran B \subseteq \Hc_2 \cap \Hc_4$). If $\Hc_3 = \Hc_1$ and $\Hc_4 =\Hc_2$, we  do not specify explicitly the spaces $\Hc_{3,4}$;  in this case an extension of $B$ is an operator $A$ such that $B$ is a restriction of $A$ (so $A$ is simultaneously an extension and a restriction of itself).  An extension $\wt A$ of a symmetric densely defined operator $A$  is called \emph{admissible} if $\Gr A \subseteq \Gr \wt A \subseteq \Gr A^*$.
By $\ker A := \{f \in \dom A \ : \ Af=0 \}$, we denote the kernel of $T$. 

Operators $A$ from a Hilbers space $\Hc_1$ to a Hilbert space $\Hc_2$ are sometimes identified with their graphs $\Gr A := \{ \{f,A f\} \ : \ f \in \dom A\} \subset \Hc_1 \oplus \Hc_2$. 
While such identification allows one to consider an operator as a particular case of a linear relation from $\Hc_1$ to $\Hc_2$, we avoid the use of this operator theory identification since it is not standard for the PDE theory. 
A linear relation $\Th$ from $\Hc_1$ to $\Hc_2$ is a subspace of the orthogonal sum $\Hc_1 \oplus \Hc_2$ (for related basic definitions see \cite{BHdS20,DM95,DM17,HW12} and Section \ref{s:MBT-BT}). 

A nonnegative symmetric operator $A$ in $\Hs$ is called \emph{positive} if $\ker A = 0$, and is called \emph{uniformly positive} if $A \ge c I_\Hs$ for a certain constant $c >0$, where $\ge$ is the standard partial ordering of  nonnegative symmetric operators and $I_\Hs f = f$, $f \in \Hs$.

\section{Main results and methods of the paper}

\subsection{Main results on m-dissipative Maxwell operators}
\label{s:Mresults}

Let $T:\dom T \subseteq \Hc \to \Hc$ be an operator in a Hilbert space  $\Hc$. The following  definitions fix our particular choice of basic conventions.

\begin{defn}[cf. \cite{Kato13,E12}] \label{d:dis}
An operator $T$ is called \emph{dissipative} 
if $\im (Tf|f)_{\Hc} \le 0$ 
for all $f \in \dom T$. 
A dissipative operator is \emph{maximal dissipative} if it is not a proper restriction of another dissipative operator.
An operator $T$ is called  \emph{m-dissipative} 
if $\CC_+ := \{\la \in \CC : \im \la >0\}$ 
is a subset of its resolvent set 
and 
$\| (T- \la I_\Hc)^{-1}\| \le (\im \la)^{-1}$ for all $\la \in \CC_+$.
An operator $T$ is called \emph{accretive} or \emph{m-accretive} if $(-\ii) T$ is dissipative or, resp., m-dissipative. 
A closeable operator $T$ is \emph{essentially m-dissipative} (\emph{essentially m-accretive}) if its closure $\overline{T}$ is m-dissipative (resp., m-accretive).
\end{defn}

\begin{defn}[cf. \cite{P59,E12,Kato13}] \label{d:contraction}
An operator $T:\dom T \subseteq \Hc \to \Hc$ is called \emph{contractive in $\Hc$} if 
$\| T h \|_{\Hc} \le \|h\|_\Hc$ for all $h \in \dom T$ (contractive operators are not necessarily closed or densely defined). A contractive operator $K$ in $\Hc$ is said to be a \emph{contraction on} $\Hc$ if $\dom K = \Hc$.
\end{defn}

We always assume that $\Om$ is a Lipschitz domain in $\RR^3$, i.e., $\Om$ is an open nonempty bounded connected set with the boundary $\pa \Om$ satisfying Lipschitz regularity condition \cite{M03}. 
The outward unit vector $\n (\x)$ normal to $\partial \Omega$ at $\x$ is defined for almost all (a.a.) 
$\x \in \pa \Om$ (w.r.t. the surface measure of $\pa \Om$). The resulting measurable  and essentially bounded $\RR^3$-vector field $ \n (\cdot)$ belongs to the space $L^\infty  (\pa \Om, \RR^3)$.

By $\LL^2 (\Om)=L^2 (\Om,\CC^3)$ we denote the Hilbert space of complex 3-D vector fields in $\Om$ equipped with the standard sesquilinear inner product 
$
(  \uu | \vv )_{\LL^2 } =  \int_{\Om} \uu \cdot \overline{\vv} =  \int_{\Om} (  \uu  (\x) | \vv (\x) )_{\CC^3} $.

The space 
$ 
\HH (\curlm,\Om) := \{ \uu \in \LL^2 (\Om) \ : \ \nabla \times \uu \in \LL^2 (\Om) \}, 
$
where $\nabla \times \uu$ is understood in the distribution sense,
is the domain (of definition) of the operator $\curlm : \uu \mapsto  \nabla \times \uu$ acting in $\LL^2 (\Om)$. 
The space $\HH (\curlm,\Om)$ and other vector spaces built as domains of operators are assumed to be equipped with the graph norms. 

The space $
\HH_0 (\curlm, \Om) 
$
is the closure in $
\HH (\curlm,\Om) $ of the space $C^\infty_0 (\Om; \CC^3)$ of compactly supported in $\Om$ smooth  $\CC^3$-vector-fields.  
The operator $\curln$ is defined as the closure in $\LL^2 (\Om)$ of the densely defined restriction $\curlm \uph_{C_0^\infty (\Om; \CC^3)}$.
So $\curlm=\curln^*$ and  $\curln=\curlm^*$ are closed operators, and $\curln$ additionally is symmetric. In particular, $\HH (\curlm,\Om) $ and $ \HH_0 (\curlm, \Om) = \dom \curln $
are Hilbert spaces.

Let us consider the material tensor fields $\epb$ (dielectric permittivity), $\mub$ (magnetic permeability), and an abstract tensor field $\bbxi$ that are given by essentially bounded matrix-valued functions $\epb (\cdot), \mub (\cdot), \bbxi (\cdot) \in L^\infty (\Om, \RR_{\sym}^{3\times3})$,
where $\RR_{\sym}^{3\times3}$ is the Banach space of $3\times3$ real-valued symmetric matrices (the choice of a norm in $\RR_{\sym}^{3\times3}$ is not important). 
We always assume  that there exists a constant $c>0$ such that 
$\epb (\x) \ge c \II $, $\mub (\x) \ge c \II$, and $\bbxi (\x) \ge c \II$ for a.a. $\x \in \Om$.

The `weighted' Hilbert space $ \LL^2_{\epb,\mub} = \LL^2_{\epb,\mub} (\Om)$ coincides with the orthogonal sum $\LL^2 (\Om) \oplus \LL^2 (\Om) $ as a linear space, but is equipped with the energy norm $ \| \cdot \|_{\LL^2_{\epb,\mub} }$ defined by
$
 \| \{\E,\H\} \|_{\LL^2_{\epb,\mub}}^2 =  ( \epb \E | \E)_{\LL^2 (\Om) }^2 + 
 ( \mub \H|\H)_{\LL^2 (\Om)}^2 .
$
In $ \LL^2_{\epb,\mub} (\Om)$, we consider the symmetric Maxwell operator 
\begin{gather}\label{e:iM}
\M \begin{pmatrix} \E \\ \H \end{pmatrix} = \begin{pmatrix} 0 & \ii \epb^{-1} \curln \\
-\ii \mub^{-1} \curln & 0 \end{pmatrix} \begin{pmatrix} \E \\ \H \end{pmatrix} , 
\quad 
\{  \E , \H \} \in \dom \M = \HH_0 (\curlm,\Om)^2 , 
\end{gather}
where $\HH_0 (\curlm,\Om)^2 = \HH_0 (\curlm,\Om) \times \HH_0 (\curlm,\Om)$.
This closed symmetric densely defined operator corresponds to a transparent nonhomogeneous (generally, anisotropic) medium \cite{ACL17}. 

The adjoint in $\LL^2_{\epb,\mub} (\Om)$ operator $\M^*$ 
has the same differential expression as $\M$, but the wider domain, $ \dom \M^* = \HH (\curlm,\Om)^2$, which is maximal natural in $\LL^2_{\epb,\mub} (\Om)$ for its differential expression. 
Let $\Hs_0$ be a linear space. Consider operators 
$\G_j:\dom \G_j \subseteq \dom \M^* \to \Hs_0$, $j=0,1$, and 
an 'abstract boundary condition' $  \G_0 f +  \G_1 f = 0 $.

The \emph{Maxwell operator $\wt M$ defined by the condition}  $  \G_0 f +  \G_1 f = 0 $ is the  restriction of the operator $\M^*$ to the set $\dom \wt \M := \{ f \in \dom \G_0 \cap \dom \G_1 \ : \    \G_0 f +  \G_1 f = 0 \}$.  
One of the main results of this paper, the description of all boundary conditions defining m-dissipative Maxwell operators, is given by Theorem \ref{t:m-dMax}. 

The formulation of this result uses the following objects and notions.
The space 
\[
\LLT = \LLt = \{ \vv \in L^2 (\pa \Om,\CC^3) \ : \ \n \cdot \vv = 0 \text{ a.e.}\}
\]
 is the $L^2$-space of the tangential vector fields  on the Lipschitz manifold $\pa \Om$.
The spaces $\HH^{-1/2} (\curl_{\pa \Om})=\HH^{-1/2} (\curl_{\pa \Om}, \pa \Om)$ and $\HH^{-1/2} (\Div_{\pa \Om})=\HH^{-1/2} (\Div_{\pa \Om}, \pa \Om)$ will be called \emph{the trace spaces of $\HH (\curlm,\Om)$}. 
They consist of  tangential vector-fields on $\pa \Om$ (generally, of negative order of regularity)  and can be defined via the surface scalar curl-operator $\curl_{\pa \Om}$  and the surface divergence $\Div_{\pa \Om}$ (see \cite{BCS02,BHPS03,M04,ACL17} and also Section \ref{s:curl}). 
The spaces $\HH^{-1/2} (\curl_{\pa \Om})$ and $\HH^{-1/2} (\Div_{\pa \Om})$ are dual to each other w.r.t. the pivot space $\LLt$ in the generalized `mixed-order norm' sense. We call this generalized duality \emph{m-order duality}, see the explanations in Section \ref{s:OT} and \ref{s:MDAbstract}. The space $\HH^{-1/2} (\curl_{\pa \Om})$ is the image of $\HH (\curlm, \Om)$ under the tangential component projection trace $\pi_\top (\uu) = - \n \times (\n  \times \uu\!\uph_{\pa \Om})$ and  
\begin{gather*}
\text{the norm $|\uu|_\pi  := \| \pi_\top^{-1} \uu \|_{\HH (\curlm, \Om)/\HH_0 (\curlm, \Om)}$ makes $\HH^{-1/2} (\curl_{\pa \Om})$ a Hilbert space;
}
\end{gather*}
similarly, the space 
$\HH^{-1/2} (\Div_{\pa \Om})$ is the image of $\HH (\curlm, \Om)$ under the the tangential trace $\ga_\top (\uu) = - \n  \times \uu\!\uph_{\pa \Om} $ (roughly speaking, $ \HH^{-1/2} (\Div_{\pa \Om}) =$ ``$\n \times$''\,$ \HH^{-1/2} (\curl_{\pa \Om}))$ and  
\begin{gather*}
\text{the norm $|\uu|_\ga  := \| \ga_\top^{-1} \uu \|_{\HH (\curlm, \Om)/\HH_0 (\curlm, \Om)}$ makes $\HH^{-1/2} (\Div_{\pa \Om})$ a Hilbert space.
}
\end{gather*}
These facts and the equivalence of norms $|\cdot|_{\pi(\ga)}$ to the original graph norms of the trace spaces follow from the results of  Buffa et al. \cite{BCS02} and Mitrea \cite{M04}, see (\ref{e:TrTh}) in Section \ref{s:curl}.
The $\LLT$-pairing-adjoint operator $V^\#$ is understood in the sense of Section \ref{s:MDAbstract}. Recall that $I_{\LLT}$ is the identity operator in $\LLt$.

\begin{thm} \label{t:m-dMax} 
Let $V$ be a certain fixed  linear homeomorphism from $\LLt $ to $\HH^{-1/2} (\curl_{\pa \Om})$ and let $V^\#$ be its $\LLt$-pairing-adjoint. Then the two following statements are equivalent:
\item[(i)] An extension $\wh \M$ of $\M$ is an m-dissipative operator in $\LL^2_{\epb,\mub} (\Om)$.
\item[(ii)] There exist a contraction 
$K$ on $\LLt$ such that $\wh \M$ is the Maxwell operator defined by the boundary condition
$(I_{\LLT}+K) V^{-1}  \pi_\top   \H + (I_{\LLT}-K) V^\# \ga_\top  \E  = 0$.

This equivalence establishes a 1-to-1 correspondence between m-dissipative extensions $\wh \M$ of $\M$ and contractions $K$ on $\LLt$. Moreover, $\wh \M$ is selfadjoint if and only if the corresponding contraction $K$ is a unitary operator. 
\end{thm}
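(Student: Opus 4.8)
The plan is to recognize the triple $(\LLt; a, b)$, with $a := V^{-1}\pi_\top\H$ and $b := V^\#\ga_\top\E$, as a concrete instance of the modified (m-order) boundary tuple of Section~\ref{s:MDAbstract} and then to invoke the abstract parametrization of m-dissipative restrictions of $\M^*$. The first step is a Green (Lagrange) identity for $\M^*$. For $f = \{\E,\H\}\in\dom\M^* = \HH(\curlm,\Om)^2$ one computes, in the weighted inner product, $(\M^* f | f)_{\LL^2_{\epb,\mub}} = \ii(\curlm\H|\E)_{\LL^2} - \ii(\curlm\E|\H)_{\LL^2}$, and integration by parts for $\curlm$ (valid on Lipschitz domains in the duality of the trace spaces) yields
\[
\im (\M^* f | f)_{\LL^2_{\epb,\mub}} = \re \langle \ga_\top \E, \pi_\top \H \rangle ,
\]
the pairing being the m-order duality between $\HH^{-1/2}(\Div_{\pa\Om})$ and $\HH^{-1/2}(\curl_{\pa\Om})$ with pivot $\LLt$. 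By the defining property of the pairing-adjoint $V^\#$ this boundary form collapses onto the pivot space, $\langle\ga_\top\E,\pi_\top\H\rangle = \langle\ga_\top\E, V a\rangle = (b | a)_{\LLt}$, so that $\im(\M^* f|f) = \re(a|b)_{\LLt}$.

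The second step is to verify that the boundary map $\Lambda: f \mapsto \{a, b\}$ sends $\dom\M^*$ onto $\LLt\oplus\LLt$ with $\ker\Lambda = \dom\M$. Surjectivity follows because $\pi_\top$ maps $\HH(\curlm,\Om)$ onto $\HH^{-1/2}(\curl_{\pa\Om})$ and $\ga_\top$ onto $\HH^{-1/2}(\Div_{\pa\Om})$ by the trace theorem~(\ref{e:TrTh}), while $V^{-1}$ and $V^\#$ are homeomorphisms onto $\LLt$; since $a$ depends only on $\H$, $b$ only on $\E$, and $\dom\M^*$ is a product, the two components range independently. The kernel statement is exactly $\pi_\top\H = 0$ and $\ga_\top\E = 0$, i.e. $\E,\H\in\HH_0(\curlm,\Om)$. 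Together with the identity of the first step this exhibits $(\LLt; a, b)$ as an m-dissipative boundary tuple, so that the admissible extensions $\wh\M$ of $\M$ (restrictions of $\M^*$) correspond bijectively to linear relations $\Th\subseteq\LLt\oplus\LLt$ through $\dom\wh\M = \{f : \{a,b\}\in\Th\}$, and $\wh\M$ is m-dissipative precisely when $\Th$ is maximal dissipative for the form $\re(a|b)$.

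The third step is the Cayley transform. Writing the boundary condition of (ii) in the coordinates $\{a,b\}$ as $(I_{\LLT}+K)a + (I_{\LLT}-K)b = 0$ and setting $u = a+b$, $v = a-b$, the condition reads $u = -Kv$; since $\re(a|b) = \tfrac14(\|u\|_{\LLT}^2 - \|v\|_{\LLT}^2)$, one gets $\re(a|b) = \tfrac14(\|Kv\|^2 - \|v\|^2)$. Hence the associated relation is dissipative iff $\|Kv\|\le\|v\|$ for all $v$, i.e. iff $K$ is contractive, and it is maximal (equivalently $\wh\M$ is m-dissipative) iff $K$ is everywhere defined on $\LLt$, which is the content of the abstract correspondence between maximal dissipative relations and contractions on the pivot space. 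The map $K\mapsto\wh\M$ is injective: the relation determined by $K$ is $\{\tfrac12\{(I_{\LLT}-K)v, -(I_{\LLT}+K)v\} : v\in\LLt\}$, and equality of two such relations forces $v_1=v_2$ and then $K_1=K_2$ by adding and subtracting the two defining identities; surjectivity is built into the construction. This yields the claimed 1-to-1 correspondence.

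Finally, $\wh\M$ is self-adjoint iff it is symmetric and m-dissipative, i.e. iff $\re(a|b)=0$ on $\Th$ together with maximality; by the formula of the third step $\re(a|b)=0$ is equivalent to $K$ being isometric, and maximality then upgrades this to $K$ surjective, hence unitary. The main obstacle is conceptual rather than computational: because the two trace spaces are genuinely different (m-order dual, not equal), the naive boundary-triple calculus does not apply, and the whole argument hinges on the homeomorphisms $V, V^\#$ correctly transporting the mixed-order boundary pairing to the standard inner product on the pivot $\LLt$, and on the closed-range and surjectivity properties of the Lipschitz tangential traces $\pi_\top,\ga_\top$ --- the nontrivial analytic input supplied by~(\ref{e:TrTh}).
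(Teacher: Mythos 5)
Your overall route is the same as the paper's: exhibit $\{V^{-1}\pi_\top\H,\,V^{\#}\ga_\top\E\}$ as a surjective boundary map that transports the mixed-order Green identity onto the pivot space $\LLt$, and then run the Kochubei/Cayley-transform parametrization of maximal dissipative boundary relations by contractions. Your Green identity, the surjectivity and kernel verification via (\ref{e:TrTh}), and the computation $\re(a|b)=\tfrac14(\|Kv\|^{2}-\|v\|^{2})$ are correct and reproduce in explicit coordinates what the paper assembles from Theorem \ref{t:MaxwellBT}, Corollary \ref{c:MBTm-D} and Lemma \ref{l:OpExt}. One point you pass over in step two: for the implication (i)$\Rightarrow$(ii) you must know that \emph{every} m-dissipative extension of the symmetric operator $\M$ is automatically admissible, i.e.\ a restriction of $\M^{*}$, before you may encode it by a relation $\Th$; this is true but not obvious (the paper invokes \cite{K77}), and your argument assumes it silently.

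The genuine error is in your final paragraph. It is false that a symmetric m-dissipative operator is selfadjoint, and false that maximality of the dissipative relation ``upgrades'' an isometric $K$ to a unitary one. Take $K$ a non-surjective isometry on the infinite-dimensional space $\LLt$ (a unilateral shift with respect to the basis of Theorem \ref{t:basis}, say). The associated relation $\Th_K=\bigl\{\{\tfrac12(I_{\LLT}-K)v,\,-\tfrac12(I_{\LLT}+K)v\}\,:\,v\in\LLt\bigr\}$ in your coordinates is still maximal dissipative, so $\wh\M$ is m-dissipative, and it satisfies $\re(a|b)\equiv 0$, so $\wh\M$ is symmetric; yet $\Th_K\neq\Th_K^{\#}$ and $\wh\M$ is a maximal symmetric, non-selfadjoint extension --- the boundary analogue of $-\ii\,d/dx$ on a half-line, which is symmetric and m-dissipative with deficiency indices $(1,0)$. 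The correct argument, as in the paper (Remark \ref{r:OpRel}\,(i) together with Lemma \ref{l:OpExt}\,(vii)), is that $\wh\M=\wh\M^{*}$ if and only if $\Th=\Th^{\#}$, and one must actually compute the pairing-adjoint relation $\Th^{\#}$ (whose Cayley transform is governed by $K^{*}$) rather than deduce selfadjointness from symmetry plus maximal dissipativity.
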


Theorem \ref{t:m-dMax} is particular case of more general Theorem \ref{t:absM-dis} (see  Section \ref{s:mdM}).

To make the description of Theorem  \ref{t:m-dMax} concrete, let us give here an explicit example of a linear homeomorphism $V: \LLt \to \HH^{-1/2} (\curl_{\pa \Om})$. Its construction is based on the Hodge decompositions of $\LLt$ and of the trace spaces of $\HH (\curlm,\Om)$. 

The space $\LLt$ admits the orthogonal (Hodge) decomposition \cite[formula (4.6)]{T83} 
\begin{gather} \label{e:HDL2}
\LLt = \grad_{\pa \Om} H^1 (\pa \Om) \oplus \KK_1  (\pa \Om) \oplus \curlm_{\pa \Om} H^1  (\pa \Om),  
\end{gather}
where  $\KK_1 = \KK_1 (\pa \Om) := \{ \vv \in \LLt \; : \; 0=\Div_{\pa \Om } \vv = \curl_{\pa \Om } \vv \}$
is the cohomology space  of $\pa \Om $, $\grad_{\pa \Om}$ is  the tangential gradient,  and $\curlm_{\pa \Om} $ is the  surface vector  curl-operator. The space  $\KK_1 (\pa \Om)$ consists of harmonic tangential vector fields and is finite-dimensional. (Its dimensionality  $\dim \KK_1 (\pa \Om)$ is equal to the 1st Betti number $b_1 (\pa \Om)$ of $\pa \Om$ and is related to the number of cuts necessary to make $\pa \Om$ simply connected.)

Let us denote by $\KK_0 = \KK_0 (\pa \Om)$ the space of locally constant scalar functions on $\pa \Om$. Then 
\[
\text{$\KK_0  = \ker \grad_{\pa \Om} = \ker \curlm_{\pa \Om}  $ and $\dim \KK_0  \in \NN $ is the 0-th Betti number $b_0 (\pa \Om)$ of $\pa \Om$}
\]
(i.e., $\dim \KK_0$ is the number of connected components of $\pa \Om$).
One can consider $\grad_{\pa \Om}$ and $\curlm_{\pa \Om}$ as operators defined on
the Hilbert factor-spaces $H^s_{\pa \Om} := H^s (\pa \Om)/\KK_0$ where $H^s (\pa \Om)$ are standard Hilbertian Sobolev spaces on $\pa \Om$ of regularity $s \in [1/2,1]$. 
In particular, $\grad_{\pa \Om}$ and $\curlm_{\pa \Om}$ map $H^1_{\pa \Om}$ homeomorphically to the closed subspaces $\grad_{\pa \Om} H^1 (\pa \Om)$ and $\curlm_{\pa \Om} H^1 (\pa \Om)$ of $\LLt$. We denote by 
\[
\text{$\Grad_{\pa \Om}^{-1}:\grad_{\pa \Om} H^1 (\pa \Om) \to H^1_{\pa \Om}$ and $\Curl_{\pa \Om}^{-1}:\curlm_{\pa \Om} H^1 (\pa \Om) \to H^1_{\pa \Om}$ }
\] the corresponding inverse homeomorphisms, and by $\De_{\pa \Om}$ the selfadjoint Laplace-Beltrami operator in the factor-space $L^2 (\pa \Om) /\KK_0$ (see Section \ref{s:Upi}).

\begin{thm} \label{t:Upi}
There exists a homeomorphism $\Upi:\HH^{-1/2} (\curl_{\pa \Om}) \to \LLt$ such that its inverse has the form
\begin{gather*} 
\Upi^{-1}  = \grad_{\pa \Om} \De_{\pa \Om}^{1/4} \Grad_{\pa \Om}^{-1} \dot + I_{\KK_1}\dot + \curlm_{\pa \Om} \De_{\pa \Om}^{-1/4} \Curl_{\pa \Om}^{-1} .
\end{gather*} 
Moreover,  the $\LLT$-pairing adjoint $ \Upi^{\#}$ to $\Upi$ is a homeomorphism from $\LLt$ to $\HH^{-1/2} (\Div_{\pa \Om}) $ of the  form  
\begin{gather*} 
 \Upi^{\#}  = \grad_{\pa \Om} \De_{\pa \Om}^{-1/4} \Grad_{\pa \Om}^{-1} \dot +  I_{\KK_1} \dot + \curlm_{\pa \Om} \De_{\pa \Om}^{1/4} \Curl_{\pa \Om}^{-1} .
\end{gather*} 
(Here and below the decompositions of operators defined 
in $\LLt$ are given w.r.t. the Hodge decomposition (\ref{e:HDL2}) of $\LLt$ and the direct Hodge decompositions of the corresponding trace spaces, see \cite{BCS02,BHPS03} and Section \ref{s:Upi} for details.)
\end{thm}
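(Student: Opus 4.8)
The plan is to reduce everything to a block-diagonal computation with respect to the three Hodge summands, exploiting the self-adjoint functional calculus of the Laplace--Beltrami operator $\De_{\pa\Om}$ on the factor space $L^2(\pa\Om)/\KK_0$. There $\De_{\pa\Om}$ is uniformly positive, so with the scale $H^{t}_{\pa\Om}=\dom\De_{\pa\Om}^{t/2}$ the fractional power $\De_{\pa\Om}^{s}$ shifts regularity by $-2s$ homeomorphically. The single substantive input I would borrow from \cite{BCS02,M04} (recorded as (\ref{e:TrTh})) is the direct Hodge decompositions of the trace spaces
\[
\HH^{-1/2}(\curl_{\pa\Om}) = \grad_{\pa\Om} H^{1/2}_{\pa\Om} \oplus \KK_1 \oplus \curlm_{\pa\Om} H^{3/2}_{\pa\Om}, \qquad \HH^{-1/2}(\Div_{\pa\Om}) = \grad_{\pa\Om} H^{3/2}_{\pa\Om} \oplus \KK_1 \oplus \curlm_{\pa\Om} H^{1/2}_{\pa\Om},
\]
where $H^{3/2}_{\pa\Om}:=\dom\De_{\pa\Om}^{3/4}$ is understood spectrally and the trace norms are equivalent to the sums of the Sobolev norms of the scalar potentials together with the finite-dimensional norm on $\KK_1$. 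Establishing this equivalence is exactly the mixed-order structure: since $\curl_{\pa\Om}(\grad_{\pa\Om} H^{1/2}_{\pa\Om})=0$ while $\curl_{\pa\Om}(\curlm_{\pa\Om} H^{3/2}_{\pa\Om})=\De_{\pa\Om} H^{3/2}_{\pa\Om}=H^{-1/2}_{\pa\Om}$, the defining condition forces the curl-potential to sit at $H^{3/2}$ and the gradient-potential at $H^{1/2}$. This identification rests on the delicate trace theory on Lipschitz manifolds, and it is the part I expect to be the main obstacle; once it is granted, the remainder is routine functional calculus.

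First I would read off from the stated formula that $\Upi^{-1}$ is block-diagonal with respect to the decomposition (\ref{e:HDL2}) of $\LLt$ and the first decomposition above, acting as $\grad_{\pa\Om}\De_{\pa\Om}^{1/4}\Grad_{\pa\Om}^{-1}$ on the gradient block, as the identity on $\KK_1$, and as $\curlm_{\pa\Om}\De_{\pa\Om}^{-1/4}\Curl_{\pa\Om}^{-1}$ on the curl block. Each block is a homeomorphism: $\Grad_{\pa\Om}^{-1}$ and $\Curl_{\pa\Om}^{-1}$ are homeomorphisms onto $H^1_{\pa\Om}$ by construction, the maps $\De_{\pa\Om}^{1/4}:H^1_{\pa\Om}\to H^{1/2}_{\pa\Om}$ and $\De_{\pa\Om}^{-1/4}:H^1_{\pa\Om}\to H^{3/2}_{\pa\Om}$ are homeomorphisms by the spectral theorem, and $\grad_{\pa\Om}$, $\curlm_{\pa\Om}$ are homeomorphisms onto the corresponding trace-space summands by the norm equivalence recalled above. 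Since the three summands are topologically complemented on both sides, the block-diagonal operator $\Upi^{-1}$ is a homeomorphism, and hence so is $\Upi=(\Upi^{-1})^{-1}$.

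Then I would compute $\Upi^{\#}$ directly from its defining relation $(\Upi u \,|\, w)_{\LLT}=\langle u, \Upi^{\#} w\rangle$, where $\langle\cdot,\cdot\rangle$ is the m-order duality between the two trace spaces extending the $\LLT$-inner product. On $\LLt$ the three Hodge types are mutually orthogonal, and the same holds for the duality pairing because $\curl_{\pa\Om}\grad_{\pa\Om}=0$, $\Div_{\pa\Om}\curlm_{\pa\Om}=0$, and $\KK_1\subseteq\ker\Div_{\pa\Om}\cap\ker\curl_{\pa\Om}$; integration by parts on the closed surface $\pa\Om$ then annihilates all cross terms, so $\Upi^{\#}$ is block-diagonal as well. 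Inverting $\Upi^{-1}$ blockwise shows that $\Upi$ acts as $\grad_{\pa\Om}\De_{\pa\Om}^{-1/4}\Grad_{\pa\Om}^{-1}$ on gradients and as $\curlm_{\pa\Om}\De_{\pa\Om}^{1/4}\Curl_{\pa\Om}^{-1}$ on curls, and on the gradient block the identity $(\grad_{\pa\Om} a \,|\, \grad_{\pa\Om} b)_{\LLT}=(\De_{\pa\Om} a \,|\, b)$ together with self-adjointness of the fractional powers yields, for $u=\grad_{\pa\Om} p$ and $w=\grad_{\pa\Om} r$,
\[
(\Upi u \,|\, w)_{\LLT} = (\grad_{\pa\Om}\De_{\pa\Om}^{-1/4} p \,|\, \grad_{\pa\Om} r)_{\LLT} = (\De_{\pa\Om}^{3/4} p \,|\, r) = \langle \grad_{\pa\Om} p, \grad_{\pa\Om}\De_{\pa\Om}^{-1/4} r\rangle,
\]
identifying the gradient part of $\Upi^{\#}$ with $\grad_{\pa\Om}\De_{\pa\Om}^{-1/4}\Grad_{\pa\Om}^{-1}$. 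The curl block follows symmetrically from $(\curlm_{\pa\Om} a \,|\, \curlm_{\pa\Om} b)_{\LLT}=(\De_{\pa\Om} a \,|\, b)$, giving $\curlm_{\pa\Om}\De_{\pa\Om}^{1/4}\Curl_{\pa\Om}^{-1}$, while on $\KK_1$ the pairing reduces to the $\LLT$-inner product so the block is $I_{\KK_1}$. Assembling the blocks produces the stated formula for $\Upi^{\#}$, and its homeomorphism property onto $\HH^{-1/2}(\Div_{\pa\Om})$ follows from the same blockwise argument applied to the second decomposition above.
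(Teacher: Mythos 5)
Your proposal is correct, and it rests on the same single external input as the paper: the direct Hodge decompositions (\ref{e:HDcurl})--(\ref{e:HDdiv}) of the trace spaces with the potentials sitting in the spectrally defined scale $H^{s}_{\De_{\pa\Om}}$. Where you diverge is in the mechanism. The paper (Section \ref{s:Upi}, Theorem \ref{t:basis}, Corollaries \ref{c:DevDuality} and \ref{c:Upiga}, Remark \ref{r:Upi}) first builds explicit eigenfunction systems $\vv_j=\la_j^{-1}\grad_{\pa\Om}u_j,\ \la_j^{-1}\curlm_{\pa\Om}u_j$, shows they form an orthonormal basis of $\LLt$ and Riesz bases $\ww_j^{\mp,\pm}$ of the trace spaces, renorms the trace spaces to make these orthonormal, and then obtains the formula for $\Upi^{\#}$ \emph{for free} from the abstract identity $\Uop_{\Hs_{-,+}\to\Hs}^{\#}=\Uop_{\Hs\to\Hs_{+,-}}$ of Lemma \ref{l:propertiesHadj}. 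You instead work blockwise on the three Hodge summands and compute $\Upi^{\#}$ by hand from the defining relation, using $(\grad_{\pa\Om}a\,|\,\grad_{\pa\Om}b)_{\LLT}=(-\De_{\pa\Om}a\,|\,b)$ and its $\curlm_{\pa\Om}$ analogue together with the vanishing of the cross terms. Your route is shorter and avoids the Riesz-basis and renormalization apparatus, which is not strictly needed for this theorem; the paper's detour buys the bases and the norms $\|\cdot\|_{\pi},\|\cdot\|_{\ga}$, which are reused heavily later (Corollary \ref{c:DevDuality}, the unitary reduction tuples of Corollary \ref{c:curlRT}, Corollary \ref{c:f(De)}). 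Two small points of hygiene: the operator $\De_{\pa\Om}$ is \emph{nonpositive} and invertible, so your identities should carry a minus sign (i.e. $(\grad_{\pa\Om}a\,|\,\grad_{\pa\Om}b)_{\LLT}=(-\De_{\pa\Om}a\,|\,b)$, $(\Upi u\,|\,w)_{\LLT}=(-\De_{\pa\Om}^{3/4}p\,|\,r)$) and the fractional powers are to be read as powers of $-\De_{\pa\Om}$ -- a looseness the paper shares -- and your heuristic $\curl_{\pa\Om}\curlm_{\pa\Om}=\De_{\pa\Om}$ likewise should be $-\De_{\pa\Om}$; neither affects the argument since the signs cancel consistently on both sides of the duality identity.
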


This theorem follows from Corollary \ref{c:Upiga} and Remark \ref{r:Upi}. 
Theorem \ref{t:Upi} is applied to the boundary condition in Theorem \ref{t:m-dMax} by taking $V^{-1} = \Upi$ and $V^\# = (\Upi^\#)^{-1}$.

Let us consider boundary conditions of another form
$\Zc \pi_{\top} \H + \ga_{\top} \E  = 0 $
with an operator $\Zc:\dom \Zc \subseteq \HH^{-1/2} (\curl_{\pa \Om}) \to \HH^{-1/2} (\Div_{\pa \Om})$. Conditions of this class are called in \cite[Sections 1.6.1 and 8.3.3]{ACL17} generalized impedance boundary conditions (see  \cite{ELN,YI18} for other generalizations of impedance and Leontovich boundary conditions). 

In the next result a characterization of all 
m-dissipative generalized impedance boundary conditions is given. We use 
\[
\text{the $\LLT$-duality pairing $\<\cdot|\star\>_{\LLT}$ of the spaces $\HH^{-1/2} (\curl_{\pa \Om})$ and $\HH^{-1/2} (\Div_{\pa \Om}) $}
\] 
constructed on the base of the inner product $(\cdot|\star)_{\LLt}$ of \emph{the pivot space} 
$\LLt$, see Remark \ref{r:<>H} and Corollary \ref{c:DevDuality}.
We say that an operator $\Zc:\dom \Zc \subseteq \HH^{-1/2} (\curl_{\pa \Om}) \to \HH^{-1/2} (\Div_{\pa \Om})$ is \emph{accretive} if $\re \<\Zc \uu | \uu \>_{\LLT} \ge 0$ for $\uu \in \dom \Zc$, and is \emph{maximal accretive} if it has no proper accretive extensions. The following result is a particular case of Corollary \ref{c:I-T}. 

\begin{cor} \label{c:Z}
The Maxwell operator defined by $\Zc \pi_{\top} \H + \ga_{\top} \E  = 0 $ is m-dissipative if and only if 
$\Zc:\dom \Zc \subseteq \HH^{-1/2} (\curl_{\pa \Om}) \to \HH^{-1/2} (\Div_{\pa \Om})$ is closed and maximal accretive.
\end{cor}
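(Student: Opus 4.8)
The plan is to derive Corollary~\ref{c:Z} from Theorem~\ref{t:m-dMax} by an operator Cayley transform that converts the parametrization by contractions $K$ into a parametrization by accretive impedance operators $\Zc$. By Theorem~\ref{t:m-dMax}, the operator $\wh\M$ defined by $\Zc\pi_\top\H+\ga_\top\E=0$ is m-dissipative if and only if there is a contraction $K$ on $\LLt$ producing the \emph{same} domain, i.e.\ cutting out the same set of boundary data $(\pi_\top\H,\ga_\top\E)$ via $(I_{\LLT}+K)V^{-1}\pi_\top\H+(I_{\LLT}-K)V^\#\ga_\top\E=0$. Hence it is enough to show that such a $K$ exists precisely when $\Zc$ is closed and maximal accretive.

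First I would transport $\Zc$ to the pivot space $\LLt$ by setting $\wh\Zc:=V^\#\Zc V$, a (not necessarily densely defined) operator in $\LLt$ with $\dom\wh\Zc=V^{-1}\dom\Zc$. The defining property of the $\LLT$-pairing-adjoint (Remark~\ref{r:<>H}) gives $\re(\wh\Zc x\,|\,x)_{\LLt}=\re\<\Zc Vx\,|\,Vx\>_{\LLT}$ for $x\in\dom\wh\Zc$, so $\wh\Zc$ is accretive in $\LLt$ if and only if $\Zc$ is accretive in the $\LLT$-duality sense; and since $V$ and $V^\#$ are homeomorphisms, $\wh\Zc$ is closed (resp.\ maximal accretive) exactly when $\Zc$ is. Writing $x=V^{-1}\pi_\top\H$ and using $\ga_\top\E=-\Zc\pi_\top\H$, the impedance condition becomes $V^\#\ga_\top\E=-\wh\Zc x$; substituting this into the boundary condition of Theorem~\ref{t:m-dMax} and using that $I_{\LLT}\pm K$ commute yields $(I_{\LLT}+K)x=(I_{\LLT}-K)\wh\Zc x$, that is, $\wh\Zc=(I_{\LLT}+K)(I_{\LLT}-K)^{-1}$ with $\dom\wh\Zc=\ran(I_{\LLT}-K)$. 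This is exactly the Cayley transform of the contraction $K$.

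It then remains to invoke the classical bijection furnished by the Cayley transform. Because $w\mapsto(1+w)/(1-w)$ carries the closed unit disk onto the closed right half-plane, an everywhere-defined contraction $K$ yields an m-accretive $\wh\Zc$, i.e.\ a closed maximal accretive operator (the surjectivity $\ran(\wh\Zc+I_{\LLT})=\LLt$ following from $(\wh\Zc+I_{\LLT})(I_{\LLT}-K)\phi=2\phi$). Conversely, starting from a closed maximal accretive $\Zc$ I would form $\wh\Zc=V^\#\Zc V$ and set $K:=(\wh\Zc-I_{\LLT})(\wh\Zc+I_{\LLT})^{-1}$; m-accretivity of $\wh\Zc$ puts $-1$ in its resolvent set and makes $K$ a contraction defined on all of $\LLt$, while the two boundary conditions coincide by the computation above. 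Feeding this $K$ into Theorem~\ref{t:m-dMax} gives the m-dissipativity of $\wh\M$, and the forward implication reads the closed m-accretive $\wh\Zc$ (hence $\Zc$) off from the $K$ supplied by the theorem.

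The main obstacle is the domain bookkeeping inside the Cayley transform, intertwined with the mixed-order duality. One must check that $\dom\wh\Zc=\ran(I_{\LLT}-K)$ is exactly the set on which the impedance relation is imposed and that no multivalued part intrudes: the impedance form presupposes that $\Zc$ is single-valued, which corresponds precisely to $\ker(I_{\LLT}-K)=\{0\}$, whereas contractions with $1$ an eigenvalue give a genuinely multivalued boundary relation lying outside the impedance class; thus the asserted equivalence is the restriction of the full correspondence of Theorem~\ref{t:m-dMax} to this subclass. The second delicate point is that accretivity and maximality must survive conjugation by the homeomorphisms $V,V^\#$ in the mixed-order duality, and this is precisely where the identity $\re(\wh\Zc x\,|\,x)_{\LLt}=\re\<\Zc Vx\,|\,Vx\>_{\LLT}$ and the construction of the pairing $\<\cdot|\star\>_{\LLT}$ (Remark~\ref{r:<>H}, Corollary~\ref{c:DevDuality}) are essential, since they reduce duality-sense accretivity to ordinary accretivity in the Hilbert space $\LLt$.
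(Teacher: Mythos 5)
Your argument is correct, but it is routed differently from the paper's. The paper obtains Corollary \ref{c:Z} as a special case of Corollary \ref{c:I-T}, whose proof goes directly through the relation calculus: the impedance condition defines $\M_{\Gr(-\ii\Zc)}$, Corollary \ref{c:MBTm-D} reduces m-dissipativity to maximal dissipativity of the linear relation $\Gr(-\ii\Zc)$, Lemma \ref{l:I-Ta} transports maximal accretivity of $\Gr\Zc$ to the pivot space via $V,V^\#$ (your identity $\re(\wh\Zc x|x)_{\LLT}=\re\<\Zc Vx|Vx\>_{\LLT}$ is exactly (\ref{e:VV++}), i.e.\ Lemma \ref{l:OpExt}(ii)--(iii)), and Remark \ref{r:m-D2} (Phillips) converts ``closed and maximal accretive'' into ``m-accretive''. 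The contraction $K$ never appears explicitly in that chain. You instead invoke the finished parametrization of Theorem \ref{t:m-dMax} and then identify, by hand, which contractions $K$ produce single-valued impedance relations; this forces you to verify the extra facts that the relation $\{(x,y):(I+K)x=(I-K)y\}$ equals $\{((I-K)f,(I+K)f):f\in\LLt\}$ (so single-valuedness is exactly $\ker(I-K)=\{0\}$), that same domain implies same boundary-data relation (which holds because $\Ga$ is surjective), and the classical bijection between contractions with $1\notin\sigma_p(K)$ and m-accretive operators. All of these check out, including the domain bookkeeping $\dom\wh\Zc=\ran(I-K)$ and the surjectivity argument $(\wh\Zc+I)(I-K)\phi=2\phi$. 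What your route buys is the explicit formula $K=(\wh\Zc-I)(\wh\Zc+I)^{-1}$ linking the two parametrizations — essentially the content the paper develops separately in Lemma \ref{l:OpExt}(vi)--(viii), Theorem \ref{t:I-Text} and the operator $\Kc_\Z$ of Theorem \ref{t:Kz}; what the paper's route buys is that it bypasses the single-valuedness discussion entirely by staying at the level of relations, and it extends verbatim to maximal accretive \emph{relations} (cf.\ Remark \ref{r:2ndCh}), which your operator-Cayley-transform argument would need the multivalued case to cover.
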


\begin{rem} \label{r:2ndCh}
It is easy to see that (unlike Theorem \ref{t:m-dMax})  Corollary \ref{c:Z} does not cover all m-dissipative boundary conditions for Maxwell operators. This can be corrected, roughly speaking, by the replacement of operators $\Zc$ with maximal accretive linear relations from $\HH^{-1/2} (\curl_{\pa \Om})$ to $ \HH^{-1/2} (\Div_{\pa \Om})$, see Theorem \ref{t:MaxwellBT}.
\end{rem}

We use Corollary \ref{c:Z} and the abstract descriptions of Remark \ref{r:2ndCh} for the study of Leontovich boundary conditions. The functional space $\Himp (\curlm, \Om)$ usually associated \cite{M03,LL04} with various surface impedance boundary conditions is defined by 
\begin{equation} \label{e:Himp}
\Himp (\curlm, \Om)  = \{ \uu \in \HH (\curlm, \Om) : \ga_\top \uu \in \LLt  \} = \{ \uu \in \HH (\curlm, \Om)  : \pi_\top \uu \in \LLt \}.
\end{equation}
Consider the restrictions $\pi_{\top,2}(\ga_{\top,2}) \, : \, \Himp (\curlm, \Om) \to \LLt $ of the tangential-component trace and the tangential trace, i.e.,
$\pi_{\top,2} := \pi_\top \uph_{\Himp (\curlm, \Om)}, $ and $\ga_{\top,2} := \ga_\top \uph_{\Himp (\curlm, \Om)} .$

Let $\Z : \pa \Om \to \ii \overline{\CC}_- $ be a measurable function (w.r.t. the surface measure of $\pa \Om$), where $\overline{\CC}_\pm = \{z\in \CC: \pm \im z \ge 0\}$ and $\ii \overline{\CC}_- = \{z\in \CC:\re z \ge 0\}$. Then we say  (cf. \cite{LL84,K94}) that 
\begin{gather} \label{e:GIBC}
\Z (\x) \pi_{\top,2} \H (\x)+ \ga_{\top,2} \E (\x) = 0 \text{ a.e. \ \ is \emph{a Leontovich-type boundary condition}}
\\
\text{and that the Maxwell operator defined by  (\ref{e:GIBC}) is a \emph{Leontovich-type operator} $\Lc_\Z$.} \label{e:Lz}
\end{gather}
The function $\Z (\cdot)$ is called the \emph{impedance coefficient}.
(In the mathematical literature, (\ref{e:GIBC}) with positive $\Z $ is often called impedance boundary condition \cite{M03,ACL17}.)

Conditions of the type (\ref{e:GIBC}) up to our knowledge were independently introduced and studied  in  Radiophysics publications of Shchukin and Leontovich, see \cite{LL84,YI18}. It was important in   Leontovich's settings that the impedance coefficient $\Z (\cdot)$ is allowed to be complex-valued.
A detailed physical explanation can be found, e.g., in the monograph of Landau, Lifshitz, and Pitaevskii \cite[Section 87]{LL84}, where additionally a purely imaginary constant impedance coefficient $\Z \equiv \al \in  \ii \RR_-$ was connected with a superconductivity approximation.  

Leontovich-type operators are always dissipative (which is seen directly by integration by parts), but not necessarily m-dissipative (see Proposition \ref{p:non-m-dis}).

We obtain from Corollary \ref{c:Z} the following characterization of m-dissipativity 
for boundary conditions slightly more general than (\ref{e:GIBC}). 
Let $\Sdiv:\dom \Sdiv \subset \LLt \to \LLt$ be the restriction of the operator $\Upi^{-1} $ 
to $\dom \Sdiv := \HH^{-1/2} (\Div_{\pa \Om}) \cap \LLt $. It is possible to show 
that $\Sdiv$ is a selfadjoint operator in $\LLt$ and that it can be written explicitly w.r.t. the Hodge decomposition of $\LLt$ as
\begin{gather} \label{e:Sga0}
\Sdiv  = \grad_{\pa \Om} \Bigl( \De_{\pa \Om}^{1/4} \uph_{\dom \De_{\pa \Om}^{3/4}} \Bigr) \Grad_{\pa \Om}^{-1} \  \oplus \ I_{\KK_1} \ \oplus \ \curlm_{\pa \Om} \De_{\pa \Om}^{-1/4} \Curl_{\pa \Om}^{-1},  
\end{gather} 
see (\ref{e:Spm}) and Section \ref{s:Upi}.
Let $\Mul_\Z$ be the operator of multiplication on $\Z (\cdot)$ in $\LLt$, i.e.,
\[
\Mul_\Z:\uu (\cdot) \mapsto\Z (\cdot) \uu (\cdot) , \qquad \dom \Mul_\Z :=\{ \uu \in \LLt : \Z (\cdot) \uu (\cdot) \in \LLt \}.
\]

\begin{thm} \label{t:CritGIMC}
(i) Let $\wh \Zc$ be an operator in $\LLt$. Then the Maxwell operator defined by 
$\wh \Zc \pi_{\top,2} \H + \ga_{\top,2} \E  = 0 $
is m-dissipative 
if and only if the operator $\Sdiv \wh \Zc \Sdiv$ in $\LLt$ is m-accretive. 
In particular, the Leontovich-type operator $\Lc_\Z$ is m-dissipative if and only if
$\Sdiv \Mul_\Z \Sdiv$ is m-accretive.

\item[(ii)]If $\Z (\x) \ge 0$ a.e. on $\pa \Om$, then $\Lc_\Z$ is m-dissipative if and only if $\Sdiv \Mul_\Z \Sdiv$ is  selfadjoint .
\end{thm}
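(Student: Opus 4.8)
The plan is to recognise the boundary condition in (i) as a generalized impedance condition of the type covered by Corollary \ref{c:Z}, and then to transport the resulting accretivity criterion from the mixed-order trace duality into the pivot space $\LLt$ by conjugating with the selfadjoint operator $\Sdiv$. Throughout I use that $\Upi$ and $\Upi^{\#}$ are homeomorphisms $\HH^{-1/2}(\curl_{\pa \Om}) \to \LLt$ and $\LLt \to \HH^{-1/2}(\Div_{\pa \Om})$, and that, by comparing the Hodge expansions in Theorem \ref{t:Upi} with (\ref{e:Sga0}), both $\Upi^{-1}$ and $(\Upi^{\#})^{-1}$ restrict to $\Sdiv$ on the elements whose images lie in $\LLt$.

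First I would check that $\wh \Zc \pi_{\top,2} \H + \ga_{\top,2} \E = 0$ is exactly of the form $\Zc \pi_\top \H + \ga_\top \E = 0$ from Corollary \ref{c:Z}, where $\Zc$ is the operator acting as $\wh\Zc$ with domain $\dom \Zc = \{ \uu \in \HH^{-1/2}(\curl_{\pa \Om}) \cap \LLt : \uu \in \dom \wh \Zc\}$. Note that no global inclusion $\LLt \subseteq \HH^{-1/2}(\Div_{\pa \Om})$ is available; what makes $\Zc$ well defined as a map into $\HH^{-1/2}(\Div_{\pa \Om})$ is that along the boundary condition $\ga_\top \E = - \wh \Zc \pi_\top \H \in \LLt$ while always $\ga_\top \E \in \HH^{-1/2}(\Div_{\pa \Om})$, so $\ran \Zc \subseteq \LLt \cap \HH^{-1/2}(\Div_{\pa \Om}) = \dom \Sdiv$. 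Using the $\Himp$-characterization (\ref{e:Himp}), any $\{\E,\H\}$ satisfying either condition automatically has $\H, \E \in \Himp (\curlm, \Om)$, so the two descriptions define the same restriction $\wt \M$ of $\M^*$. Corollary \ref{c:Z} then reduces (i) to the statement that $\Zc$ is closed and maximal accretive with respect to the $\LLT$-pairing.

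The core step is to rephrase ``$\Zc$ closed and maximal accretive'' as ``$\Sdiv \wh \Zc \Sdiv$ m-accretive in $\LLt$''. Since $\Upi^{\#}$ is the $\LLT$-pairing adjoint of $\Upi$, the identity $((\Upi^{\#})^{-1} x | \Upi \uu)_{\LLt} = \< x | \uu \>_{\LLT}$ holds, and hence the conjugation $\Zc \mapsto (\Upi^{\#})^{-1} \Zc \Upi^{-1}$ carries the $\LLT$-pairing accretivity, closedness, and maximal accretivity of an operator $\HH^{-1/2}(\curl_{\pa \Om}) \to \HH^{-1/2}(\Div_{\pa \Om})$ into the $\LLt$-accretivity, closedness, and m-accretivity of an operator in $\LLt$; this is the same mechanism by which Corollary \ref{c:Z} descends from Theorem \ref{t:m-dMax}. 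It then remains to identify $(\Upi^{\#})^{-1} \Zc \Upi^{-1}$ with $\Sdiv \wh \Zc \Sdiv$: for $g \in \dom \Sdiv$ one has $\Upi^{-1} g = \Sdiv g \in \LLt$, then $\Zc \Upi^{-1} g = \wh \Zc \Sdiv g \in \dom \Sdiv$, and finally $(\Upi^{\#})^{-1}$ restricted to $\dom \Sdiv$ acts as $\Sdiv$, giving $(\Upi^{\#})^{-1} \Zc \Upi^{-1} g = \Sdiv \wh \Zc \Sdiv g$ with matching domains. The specialization to $\Lc_\Z$ is the case $\wh \Zc = \Mul_\Z$. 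The step I expect to be the main obstacle is precisely this domain bookkeeping: the operator $\Sdiv$ is unbounded, so while the accretivity inequality transfers painlessly, one must verify that the genuine content — the range/maximality condition — is transported correctly, i.e. that maximal accretivity of $\Zc$ in the weaker $\HH^{-1/2}$ topology corresponds exactly to m-accretivity of $\Sdiv \wh \Zc \Sdiv$ in $\LLt$; the bounded homeomorphisms $\Upi, \Upi^{\#}$ (rather than their unbounded restriction $\Sdiv$) are what make the range condition transfer, and reconciling the two is the delicate point.

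For (ii), once (i) is available the argument is short. If $\Z (\x) \ge 0$ a.e., then $\Mul_\Z$ is a nonnegative selfadjoint operator in $\LLt$, and since $\Sdiv$ is selfadjoint the operator $\Sdiv \Mul_\Z \Sdiv$ is nonnegative and symmetric. For a nonnegative symmetric operator m-accretivity is equivalent to selfadjointness (equivalently, to $\ran (\Sdiv \Mul_\Z \Sdiv + I_{\LLT}) = \LLt$), so (i) immediately yields that $\Lc_\Z$ is m-dissipative if and only if $\Sdiv \Mul_\Z \Sdiv$ is selfadjoint.
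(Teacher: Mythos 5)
Your proof is correct and follows essentially the same route as the paper: the paper derives Theorem \ref{t:CritGIMC} from the abstract Theorem \ref{t:aIBCm-d}, which is proved by identifying the Leontovich-type condition with an impedance-type condition (Lemma \ref{l:M=M}), invoking Corollary \ref{c:I-T} together with Remark \ref{r:I-T} for part (ii), and identifying $V^{\#} Z V$ with $\Soin \wh Z \Soin$ (Lemma \ref{l:SZS=UZU}) --- exactly your three steps carried out with $V=\Upi^{-1}$ and $\Soin=\Sdiv$. The only nitpick is that $\dom \Zc$ should be cut down to those $\uu$ with $\wh \Zc \uu \in \LLt \cap \HH^{-1/2}(\Div_{\pa \Om})$, as in (\ref{e:domZ}), so that $\Zc$ is genuinely an operator into $\HH^{-1/2}(\Div_{\pa \Om})$; as you observe, this does not change the resulting restriction of $\M^*$.
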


This theorem is a particular case of Theorem \ref{t:aIBCm-d}.

\begin{ex} \label{ex:sectorialL}
Assume that there exist constants $c_1$, $c_2 \in (0,+\infty)$ and  $\al_1$, $\al_2  \in [-\pi/2,\pi/2]$ such that $0<c_1 \le |\Z (\x)| \le c_2 $ and 
$-\pi/2 \le \al_1 \le \arg \Z (\x) \le \al_2 < \al_1 + \pi$ a.e. on $\pa \Om$.  Then the  operator $\Lc_\Z$ is m-dissipative (this follows from Corollary \ref{c:sectorialL}). The case $\al_1=\al_2=-\pi/2$ corresponds to superconducting materials on the boundary $\pa \Om$ \cite[Section 87]{LL84}. 
\end{ex}


The case where the set $\pOmp:=\{\x \in \pa \Om : \Z(\x) = 0\}$ is of positive surface measure is connected with mixed boundary conditions in the sense that one  mixes the condition $\n \times \E = 0$ on $\pOmp$ (which is essentially a conservative  perfect conductor condition) and the Leontovich condition (\ref{e:GIBC}) on $\pOma = \pa \Om \setminus \pOmp$ (where $\Z (\x)  \neq 0$). In Proposition \ref{p:non-m-dis}, we show that the Leontovich-type operator itself is not m-dissipative whenever $\pOmp$ contains an open subset of the manifold $\pa \Om$. In the case when $\pa \Om$ is smooth and $\Z \in C^1 (\pa \Om)$, \cite{K94} provides a certain extension of $\Lc_\Z$ to an m-dissipative Maxwell operator. 
From the point of view of well-posedness of dynamical and time-harmonic Maxwell systems, mixed boundary conditions were considered in \cite{M03,ACL17} for polyhedral domains $\Om$ and $\pOmp$ with piecewise smooth boundaries (see the discussion in \cite[Sections 1.6.1 and 5.1.2]{ACL17} and also Section \ref{s:discussion}).

If $\Z (\cdot)$ and/or the boundary of the set $\pOmp$ are not good enough (see Example \ref{ex:FF} with a fat fractal set $\pOma$ and Example  \ref{ex:random} with randomized mixed boundary conditions), a question of m-dissipative extensions of $\Lc_\Z$ is involved. Let us note that electromagnetic properties of fractal structures is a question of interest in contemporary applied Physics (e.g. \cite{LM12}), and that randomized Leontovich-type conditions of Example  \ref{ex:random} is a natural approach for modeling of  the leakage of EM-energy into an uncertain surrounding medium $\RR^3 \setminus \Om$ (cf. \cite{MSG17,EK21}). 

We use Theorem \ref{t:m-dMax} to describe of all m-dissipative extensions  in $\LL^2_{\epb,\mub} (\Om)$ of an arbitrary Leontovich-type operator $\Lc_\Z$. The Cayley transform $\Kc_\Z$ of the dissipative in 
$\LLt$ operator $(-\ii) \Sdiv \Mul_\Z \Sdiv$ is defined by 
$
\Kc_\Z := (-\ii\Sdiv \Mul_\Z \Sdiv + \ii I_{\LLT}) (-\ii\Sdiv \Mul_\Z \Sdiv - \ii I_{\LLT})^{-1} .
$
Note that $\Kc_\Z$ is a contractive operator in $\LLt$ (its domain is not necessarily whole $\LLt$).

\begin{thm} \label{t:Kz} 
Let $\Z : \pa \Om \to \ii \overline{\CC}_- $ be measurable.
Then $\wh \M$ is an m-dissipative extension of the Leontovich-type operator $\Lc_\Z$ if and only if $\wh \M$ is a Maxwell operator defined by 
\begin{gather} \label{e:K+IGa3}
(I_{\LLT}+K) \Upi  \pi_\top   \H +  (I_{\LLT}-K) (\Upi^\#)^{-1}  \ga_\top  \E = 0 
\end{gather}
with a certain contraction $K$ on $\LLt$ such that $K$ is an extension of $\Kc_\Z$.
\end{thm}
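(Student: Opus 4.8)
The plan is to reduce the statement to the abstract parametrization of Theorem~\ref{t:m-dMax} and then to single out, inside that parametrization, exactly the contractions whose boundary condition is compatible with the Leontovich condition (\ref{e:GIBC}). First I would record two reductions. Since $\HH_0(\curlm,\Om)^2 = \dom\M$ is contained in $\dom\Lc_\Z$ (fields with vanishing traces satisfy (\ref{e:GIBC}) and lie in $\Himp(\curlm,\Om)^2$), every extension of $\Lc_\Z$ is an extension of $\M$; hence by Theorem~\ref{t:m-dMax}, applied with $V^{-1}=\Upi$ and $V^\#=(\Upi^\#)^{-1}$ as in Theorem~\ref{t:Upi}, every m-dissipative extension $\wh\M$ of $\Lc_\Z$ equals the operator $\wh\M_K$ defined by (\ref{e:K+IGa3}) for a unique contraction $K$ on $\LLt$. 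It therefore suffices to determine for which $K$ the operator $\wh\M_K$ extends $\Lc_\Z$.

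Next I would translate the Leontovich condition into the abstract boundary variables $x:=\Upi\pi_\top\H$ and $y:=(\Upi^\#)^{-1}\ga_\top\E$ occurring in (\ref{e:K+IGa3}). The key computation uses the explicit forms of $\Upi,\Upi^\#$ in Theorem~\ref{t:Upi} and of $\Sdiv$ in (\ref{e:Sga0}): all three operators act by the same multipliers on the Hodge components, so on the overlap of their domains one has $\Sdiv=\Upi^{-1}=(\Upi^\#)^{-1}$ and $\Upi=\Sdiv^{-1}$. Consequently, for $f=\{\E,\H\}\in\Himp(\curlm,\Om)^2$ one obtains $\pi_\top\H=\Sdiv x$ and $\ga_\top\E=\Sdiv^{-1}y$, so that the Leontovich condition $\Mul_\Z\pi_\top\H+\ga_\top\E=0$ becomes, after applying $\Sdiv$, the abstract relation $y=-\Sdiv\Mul_\Z\Sdiv\,x$. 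Using the surjectivity of the tangential traces onto the trace spaces (the trace theorem (\ref{e:TrTh})) together with the defining property (\ref{e:Himp}) of $\Himp(\curlm,\Om)$, I would verify that $x$ ranges over exactly $\dom(\Sdiv\Mul_\Z\Sdiv)$ as $f$ runs through $\dom\Lc_\Z$.

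With this in hand, write $\Th:=\Sdiv\Mul_\Z\Sdiv$, which is accretive in $\LLt$ by the hypothesis $\re\Z\ge0$. Substituting $y=-\Th x$ into (\ref{e:K+IGa3}) shows that $\wh\M_K\supseteq\Lc_\Z$ holds if and only if $[(I_{\LLT}+K)-(I_{\LLT}-K)\Th]x=0$ for all $x\in\dom\Th$, i.e.\ $K(I_{\LLT}+\Th)x=(\Th-I_{\LLT})x$. Since $\Th$ is accretive, $I_{\LLT}+\Th$ is injective with range $\dom\Kc_\Z$, and a direct manipulation (the same one that verifies $\Kc_\Z$ is contractive) gives $(\Th-I_{\LLT})(I_{\LLT}+\Th)^{-1}=\Kc_\Z$; this is precisely the Cayley transform $(B+\ii I_{\LLT})(B-\ii I_{\LLT})^{-1}$ of $B=(-\ii)\Th$ rewritten in terms of $\Th$. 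Hence, setting $w=(I_{\LLT}+\Th)x$, the displayed condition reads $Kw=\Kc_\Z w$ for all $w\in\dom\Kc_\Z$, that is, $K\supseteq\Kc_\Z$. Both implications follow: if $K\supseteq\Kc_\Z$ then $\wh\M_K$ is m-dissipative by Theorem~\ref{t:m-dMax} and extends $\Lc_\Z$, while conversely every m-dissipative extension of $\Lc_\Z$ arises from such a $K$.

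The main obstacle I expect lies in the middle step: justifying the operator identities $\Sdiv=\Upi^{-1}=(\Upi^\#)^{-1}$ and $\Upi=\Sdiv^{-1}$ on the precise domains involved, and---most delicately---proving that the abstract data $x$ realized by $\dom\Lc_\Z$ fills out all of $\dom\Th$ (equivalently, that $w=(I_{\LLT}+\Th)x$ fills out $\dom\Kc_\Z$), so that $K\supseteq\Kc_\Z$ is genuinely necessary and sufficient rather than mere agreement on a smaller set. This requires combining the mixed-order mapping properties of the Hodge components developed in Section~\ref{s:Upi} with the surjectivity of $\pi_\top,\ga_\top$ from $\Himp(\curlm,\Om)$ onto $\HH^{-1/2}(\curl_{\pa\Om})\cap\LLt$ and $\HH^{-1/2}(\Div_{\pa\Om})\cap\LLt$, respectively.
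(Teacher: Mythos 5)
Your proposal is correct and takes essentially the same route as the paper: the paper also parametrizes all m-dissipative extensions of $\M$ by contractions $K$ on $\LLt$ (Theorem \ref{t:absM-dis}/\ref{t:m-dMax}), translates the Leontovich condition into the boundary relation governed by $\Sdiv \Mul_\Z \Sdiv$ (Lemmata \ref{l:M=M} and \ref{l:SZS=UZU}, using that $\Sdiv$ is the restriction of $\Upi^{-1}=(\Upi^\#)^{-1}$ to $\HH^{-1/2}(\Div_{\pa\Om})\cap\LLt$), and identifies the admissible $K$ as the contractive extensions of the Cayley transform $\Kc_\Z$ via Lemma \ref{l:OpExt}\,(viii) and Theorem \ref{t:I-Text}. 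Your direct substitution $K(I_{\LLT}+\Th)x=(\Th-I_{\LLT})x$ and the surjectivity argument showing that $x$ exhausts $\dom(\Sdiv\Mul_\Z\Sdiv)$ are exactly the content the paper packages into its abstract lemmata, so the delicate points you flag are handled the same way there.
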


This theorem is a particular case of Theorem \ref{t:I-Text}.

If $\Lc_\Z$ is essentially m-dissipative, then its closure $\overline{\Lc_\Z}$ is the only m-dissipative extension. 

\begin{ex} \label{ex:De}
Let $\Dev $ be the selfadjoint Laplace-de Rham operator in $\LLt$ (see Section \ref{s:Upi}). Then the Maxwell operator associated with the boundary condition
$(-\Dev) \pi_{\top,2} \H + \ga_{\top,2} \E = 0 $ is essentially m-dissipative. This statement is valid actually for boundary conditions $f(-\Dev) \pi_{\top,2} \H + \ga_{\top,2} \E  = 0$ with an arbitrary Borel function  $f:[0,+\infty) \to \ii \overline{\CC}_-$ taken of the operator $(-\Dev)$ in the standard sense of \cite{AG}
(see Corollary \ref{c:f(De)}). 
\end{ex}

Theorem \ref{t:aIBCm-d} implies that $\Lc_\Z$ is essentially m-dissipative if and only if the boundary operator $ \Sdiv \Mul_\Z \Sdiv $ is essentially m-accretive. If $\Z$ is `too singular' it is difficult to check essential m-dissipativity of $\Lc_\Z$ and $(-\ii) \Sdiv \Mul_\Z \Sdiv $. Even if $\Z$ is nonnegative, the selfadjoint operators $\Sdiv$ and $\Mul_\Z$ do not generally commute and their product may have very peculiar properties. In such cases, it makes sense to consider specific types of m-dissipative extension of $\Lc_\Z$.

In particular, we introduce in Section \ref{s:discussion} for arbitrary nonnegative impedance coefficients $\Z$ special \emph{F-extensions} $\Lc_{\Z,\Fr}$ and \emph{K-extensions} $\Lc_{\Z,\KN}$
of Leontovich-type operators $\Lc_\Z$ built with the help of the Friedrichs extension $[\Sdiv \Mul_\Z \Sdiv]_{\Fr}$ and the Krein-von Neumann extension $[\Sdiv \Mul_\Z \Sdiv]_{\KN}$ of the boundary operator $\Sdiv \Mul_\Z \Sdiv$. 
The corresponding  m-dissipative Maxwell operators $\Lc_{\Z,\Fr(\KN)}$ are defined by the boundary condition (\ref{e:K+IGa3})
with $K$ equal, roughly speaking,  to  the Cayley transform of $(-\ii)[\Sdiv\Mul_{\Z} \Sdiv]_{\Fr(\KN)}$. 
However,  this procedure works so directly only if the operator $\Sdiv\Mul_{\Z} \Sdiv$ is densely defined. Otherwise, 
$[\Sdiv\Mul_{\Z} \Sdiv]_\Fr$ and, possibly, $[\Sdiv\Mul_{\Z} \Sdiv]_\KN$  are not operators, but selfadjoint linear relations in $\LLt$ 
(it is difficult to exclude this possibility if $\Z$ has no good regularity properties, in particular, in Examples \ref{ex:FF} and \ref{ex:random}). 

The rigorous construction of  F- and K- extensions $\Lc_{\Z,\Fr(\KN)}$ is given in Section \ref{s:discussion}, where we discuss also other approaches. Note that F- and K- extensions cannot be considered as direct analogues of  Friedrichs and Krein-von Neumann extensions
and that Maxwell operators are, generally, not sectorial.

\begin{rem} \label{r:div=0}
The classical Maxwell system includes two (weighted) divergence-free equations
\begin{gather} \label{e:divEH=0}
\Div (\epb(\x) \E (\x)) = 0 \quad \text{ and } \quad \Div (\mub(\x) \H (\x)) = 0  \qquad \text{ a.e. in $\Om$}, 
\end{gather}
which can be taken into account for an arbitrary m-dissipative extension $\wh \M$ of $\M$ by means of orthogonal decomposition of the operator $\wh \M$.
Since Leontovich-type operators require a  decomposition somewhat different from  the decomposition that  \cite{BS87} used for perfect conductor conditions, we consider this procedure in detail.  Let $\bbxi (\cdot)$ be equal to either $\epb (\cdot)$, or $\mub (\cdot)$, and consider
the `weighted' Hilbert space  $ \LL^2_\bbxi (\Om) := (\LL^2 (\Om), \| \cdot \|_{\LL^2_\bbxi})$ with $\| \uu \|_{\LL^2_\bbxi}^2 = (\bbxi \uu , \uu)_{\LL^2 (\Om)}$. Let us consider the closed subspaces of $\LL^2_\bbxi (\Om)$ of gradient and 'weighted' solenoidal vector fields
\[ \text{$ \GG := \{ \uu \in \LL^2 (\Om) \ : \uu = \grad \vphi , \ \vphi \in H^1_\loc (\Om) \}$, \quad
$ \SS^{\bbxi} := \{ \uu \in \LL^2 (\Om) \ : \ \Div (\bbxi \uu) = 0 \} $. }
\]
The space $\GG_0^\bbxi $ is defined as  the closure of $\grad C_0^\infty (\Om)$ in $(\GG , \| \cdot \|_{\LL^2_\bbxi})$.
The orthogonal decomposition \cite{BS87}
$\LL^2_{\bbxi}  (\Om) = \SS^\bbxi   \oplus \GG_0^\bbxi$
is a weighted generalization of one of the versions of the Helmholtz-Weyl decomposition. Then 
$\LL^2_{\epb,\mub} (\Om) =\SS^{\epb,\mub}  \oplus \GG_0^{\epb,\mub}$ with
$\GG_0^{\epb,\mub} := \left\{ \{\E,\H\} \ : \ \E \in \GG_0^\epb ,\  \H \in
\GG_0^\mub \right\} $ and  
$\SS^{\epb,\mub} := \left\{ \{\E,\H\} \ : \ \E \in \SS^\epb ,\  \H \in \SS^\mub \right\}$. 
Recall that, if $\Hc = \Hc_1 \oplus \Hc_2$, where $\Hc_1$ and $\Hc_2$ are reducing subspaces of an operator $T$ \cite{AG}, one says that the decomposition $\Hc = \Hc_1 \oplus \Hc_2$ reduces $T$ and writes $T = T  \ver_{\Hc_1} \oplus T  \ver_{\Hc_2}$,
where the part 
$T_j = T \ver_{\Hc_j}$ of $T$ in $\Hc_j$ is the restriction $T \uph_{\Hc_j \cap \dom T}$ perceived as an operator in $\Hc_j$.
Let $\wt \M $ be an arbitrary dissipative extension of $\M$. 
 Then the Maxwell operators $\wt \M$, $\M$, and $\M^*$ are  reduced by $\LL^2_{\epb,\mub} (\Om) =\SS^{\epb,\mub}  \oplus \GG_0^{\epb,\mub}$ to 
 \begin{gather}
 \text{$\wt \M = \wt \M \ver_{\SS^{\epb,\mub}} \oplus 0$, \quad $\M = \M \ver_{\SS^{\epb,\mub}} \oplus 0$, \quad and $\M^* = (\M \ver_{\SS^{\epb,\mub}})^* \oplus 0$, respectively; } \label{e:whMoplus}
 \\
\text{if $\wt \M$ is m-dissipative, then its part $\wt \M \ver_{\SS^{\epb,\mub}}$ is an m-dissipative operator in $\SS^{\epb,\mub}$.} \label{e:whMS}
\end{gather}
Since, by definition, $\SS^\bbxi$ coincides with $\HH (\Div \bbxi 0,\Om) = \{ \uu \in \LL^2 (\Om) \ : \ \Div (\bbxi \uu) = 0  \}$ as a linear space, we see that in the case of (\ref{e:whMS}) the operator $\wt \M \ver_{\SS^{\epb,\mub}}$ is an m-dissipative operator corresponding to the Maxwell system equipped with the divergence-free conditions (\ref{e:divEH=0}). 
\end{rem}

The proof of (\ref{e:whMoplus})-(\ref{e:whMS}) follows from $\GG_0^{\epb,\mub} \subseteq \ker \M$. Indeed, since $\M$ is a densely defined symmetric operator, one sees that 
$\GG_0^{\epb,\mub}$ and $\ker \M$ are reducing subspaces for $\M$ and  
$\M^*$. This implies for $\M$ and $\M^*$ the decompositions 
(\ref{e:whMoplus}).  Since $\wt M$ is a dissipative extension of $\M$, one has from \cite{K77} that $\Gr \M \subset \Gr \wt \M \subset \Gr \M^*$. This and the decomposition for $\M^*$ imply  $\wt \M = \wt \M \ver_{\SS^{\epb,\mub}} \oplus 0$. If $\wt M$ is m-dissipative, one sees from Definition \ref{d:dis} that $\wt \M \ver_{\SS^{\epb,\mub}} $ is so.

\subsection{Boundary tuples and operator-theoretic tools of the paper}
\label{s:OT}

The PDE nature of the problem of finding of all m-dissipative boundary conditions for Maxwell operators can be separated from its operator theoretic features in several steps.

In the 1st step (see Sections \ref{s:MDAbstract} and \ref{s:Upi}), we put the duality of the trace spaces $\HH^{-1/2} (\curl_{\pa \Om})$ and $\HH^{-1/2} (\Div_{\pa \Om})$ w.r.t. the pivot space $\LLt$ into the abstract framework of  \cite[Appendix to IX.4, Example 3]{RSII}, where a generalization of the notion of rigged Hilbert space is considered from the interpolation point of view. 

We call this generalized type of duality \emph{mixed-order norm duality} (in short, \emph{m-order duality}) to distinguish it from the standard duality of rigged Hilbert spaces.
A standard rigged Hilbert space with the continuous imbeddings $\Hs_+ \imb \Hs \imb \Hs_- $   involves the Hilbert spaces $\Hs_\pm$, which are called the spaces with positive- and negative-order norms, see e.g. \cite{GG91}. 
It is clear from the Hodge decompositions (\ref{e:HDL22})-(\ref{e:HDdiv}) obtained in \cite{C96,BCS02,BHPS03}   that $\HH^{-1/2} (\curl_{\pa \Om})$, $\LLt$, and $\HH^{-1/2} (\Div_{\pa \Om})$ are not ordered by such imbeddings, since, roughly speaking, one part of each of the trace spaces has a norm of positive order, while the other part  a norm of negative order. That is why 
we denote a pair of abstract Hilbert spaces connected by a mixed-order norm duality by $\Hs_{-,+}$ and $\Hs_{+,-}$ and call them mixed-order spaces (in short, \emph{m-order spaces}, see Section \ref{s:MDAbstract}). By 
$\wt \Hs_{\mp,\pm}:= \Hs_{\mp,\pm} \cap \Hs$, their intersections with the pivot space $\Hs$ are denoted. 

We define the $\Hs$-pairing  as  the \emph{duality pairing $\<\cdot|\star\>_{\Hs}$} of the spaces $\Hs_{\mp,\pm}$ that is constructed, roughly speaking, as an extension of the inner product $(\cdot|\star)_{\Hs}$ of $\Hs$  (see  Proposition \ref{p:MixOrder}).
We systematically use $\Hs$-pairing adjoint operators $T^\#$, see  (\ref{e:Am}) and Theorems \ref{t:m-dMax}--\ref{t:Upi}.

In Section \ref{s:MDAbstract}, m-order spaces and their duality is considered from the extrapolation point of view. This process produces various auxiliary operators, which are used  through the rest of the paper as technical tools.
The most important of these auxiliary operators are the homeomorphisms $\Upi$ and $\Upi^\#$ of Theorem \ref{t:Upi} and their abstract versions $\Uop_{\Hs_{-,+} \to \Hs}$ and $\Uop_{\Hs \to \Hs_{+,-}}$, see Proposition \ref{p:MixOrder} and Corollary \ref{c:Upiga}.

In Section \ref{s:Upi}, we construct Riesz bases in the trace spaces $\HH^{-1/2} (\curl_{\pa \Om})$ and $\HH^{-1/2} (\Div_{\pa \Om})$ and introduce in these spaces associated Hilbertian norms $ \| \cdot \|_\pi$ and $ \| \cdot \|_\ga$, which are equivalent to the norms $|\cdot|_{\pi}$ and $|\cdot|_\ga $ of Section \ref{s:Mresults}, but make the homeomorphisms $\Upi$ and $\Upi^\#$ unitary operators and make the Hodge decompositions orthogonal.

In the 2nd step, the integration by parts for the sesquilinear form 
$(\M^* \uu|\vv)_{\LL^2_{\epb,\mub}}$ 
associated with the Maxwell operator $\M^*$
is placed into abstract settings. An abstract approach to the description of boundary value conditions for elliptic partial differential operators (PDOs) traces its origin to works of Calkin, M.~Krein, Birman, Vishik, and Grubb (see the monographs \cite{GG91,G09,P12,DM17,BHdS20}). It is one of the points of the present paper that Calkin's reduction operators (see \cite{C39,HW12} and Section \ref{s:curl}) are especially well suited for writing of an abstract version for Leontovich-type operators, see Section \ref{s:GIBCa} and Corollary \ref{c:curlRT}. (Note that we unite a Calkin reduction operator $G$, its target space $\Hs$, and an associated rotation $W$ into a triple $(G,\Hs,W)$ and call it a \emph{Calkin triple}.) 

For a general description of boundary conditions for PDOs and, in particular, for Maxwell operators, the use of Calkin's reduction operators  based on natural trace maps is difficult  because they usually are not surjective 
onto their target spaces \cite{C39,HW12}. A general description of m-dissipative/selfadjoint boundary conditions require other operator-theoretic notions involving surjective boundary maps.
A powerful and well-developed abstract tool of such type is the notion of boundary triple (or boundary value space), which was introduced by 
Talyush, Kochubei, and Bruk  
for an operator $\A^*$ adjoint to a densely defined symmetric operator $\A$ with equal deficiency indices \cite{K75,GG91} (see also the review \cite{DM95} and the monographs \cite{P12,DM17,BHdS20}). 
Namely, $(\Hs, \wh \Ga_0, \wh \Ga_1)$ is called a \emph{boundary triple for} $\A^*$ if an auxiliary Hilbert space $\Hs$ and 
the maps $\wh \Ga_j: \dom \A^* \to \Hs$, $j=0,1$, are such that 
\begin{multline} \label{e:BTr}
\text{$\wh \Ga : f \mapsto \{ \wh \Ga_0 f ,\wh \Ga_1 f \} $  is a surjective linear operator from $\dom \A^*$ onto $\Hs \oplus \Hs$} \\
\text{and $
(\A^*f|g)_\Hc - (f|\A^*g)_\Hc =  (\wh \Ga_1 f | \wh \Ga_0 g )_\Hs  - (\wh \Ga_0 f | \wh \Ga_1 g )_\Hs $ for all $f,g \in \dom \A^*$.   }
\end{multline}

As soon as a boundary triple is constructed for a differential operator, this  leads to a description of all selfadjoint/m-dissipative extensions via the result of Kochubei \cite{K75}. 
The substantial difficulty of this abstract approach is that boundary triples are often not well adjusted to natural trace maps of the PDO. We refer to \cite{G09,AGW14} and references therein for the review on general linear boundary value problems for  even-order PDOs in domains with smooth boundaries $\pa \Om$ and the use of the techniques of differential and pseudo-differential boundary operators. Substantial efforts aimed on the connection of the PDE and operator-theoretic approaches led to a number of modifications of the notion of boundary triple,
see \cite[Definition 6.1 and Proposition 6.3]{DM95}, \cite{A00}, \cite[Sections 7.4 and 7.6]{DHMdS12}, \cite[Sections 1.2.9 and 3.4]{P12}, \cite[Section 2]{BM14}, and references therein. Up to our understanding, these generalizations of boundary triples are not well suited for the particular case of Maxwell operators. 

We introduce one more modification of the notion of boundary triple, which uses mixed-order duality and is natural from the point of view of the integration by parts for $(\M^* \uu|\vv)_{\LL^2_{\epb,\mub}}$.

\begin{defn} \label{d:MBT}
Let $\A$ be a closed densely defined symmetric operator in a certain Hilbert space $\Hc$.
Let $\Hs_{\mp,\pm}$ be m-order Hilbert spaces dual  to each other w.r.t. a pivot Hilbert space $\Hs$ in the sense of Section \ref{s:MDAbstract}.
 We shall say that $(\Hs_{-,+},\Hs, \Ga_0,\Ga_1)$ is a \emph{mixed-order boundary tuple} (in short, \emph{m-boundary tuple}) for the operator $\A^*$
if the following conditions hold:
\item[(M1)] the map 
$\Ga : f \mapsto \{ \Ga_0 f , \Ga_1 f \} $ is a surjective linear operator from $\dom \A^*$ onto $\Hs_{-,+} \oplus \Hs_{+,-}$; 
\item[(M2)] $
(\A^*f|g)_{\Hc} - (f|\A^*g)_{\Hc} =  \< \Ga_1 f | \Ga_0 g \>_\Hs  - \<\Ga_0 f | \Ga_1 g \>_\Hs $ for all $f,g \in \dom \A^*$.
\end{defn}

\begin{rem}
A  more complete notation for m-boundary tuples is $(\Hs_{-,+},\Hs,\Hs_{+,-},\Gb, W)$ 
because $\Hs_{+,-}$ also participates in the definition.
However, we skip $\Hs_{+,-}$ since its m-order duality to $\Hs_{-,+}$ w.r.t. $\Hs$ defines it essentially  uniquely (uniquely up to the identification of sequences $(u_k)_{k=1}^{\infty} \subset \wt \Hs_{+,-}$ fundamental w.r.t. $\|\cdot\|_{\Hs_{+,-}}$, see Section \ref{s:MDAbstract}).
\end{rem}

\begin{rem} 
A notion of m-boundary tuple is generalization a of the notion of boundary triple. Indeed,  
with every boundary triple $(\Hs,\wh \Ga_0,\wh \Ga_1)$, one can associate a ´trivial´ m-boundary tuple $(\Hs,\Hs,\wh \Ga_0,\wh \Ga_1)$ employing the trivial duality 
$(\Hs_{\mp,\pm}, \| \cdot \|_{\Hs_{\mp,\pm}}) = (\Hs, \| \cdot \|_\Hs)$. On the other hand,
every m-boundary tuple can be regularized to produce a boundary triple, see Proposition \ref{p:BT}. These regularizations are equivalent to a choice of a particular pair of biorthogonal Riesz bases in $\Hs_{\mp,\pm}$. Roughly speaking, m-boundary tuple can be seen as a `coordinate-free' replacement of regularized boundary triples. 
\end{rem}

We show (see Theorem \ref{t:MaxwellBT}) that 
$(\HH^{-1/2} (\curl_{\pa \Om}),\LLt , \pi_\top^{\hf}, \   \ii \ga_\top^\ef )$ with $\pi_\top^{\hf} \{\E,\H\} := \pi_\top \H$ and $\ga_\top^{\ef} \{\E,\H\} := \ga_\top \E$ is an m-boundary tuple for the Maxwell operator $\M^*$.
This m-boundary tuple occurs to be especially convenient for the study of the generalized impedance boundary conditions of \cite[Section 1.6.1]{ACL17}. In particular, Corollary \ref{c:Z} follows from Theorem \ref{t:MaxwellBT}.

It occurs that Leontovich-type boundary conditions (\ref{e:GIBC}) are written in terms of two mutually dual Calkin's reduction operators for the operator $\curlm$, namely, in terms of $\pi_{\top,2}$ and $\ga_{\top,2}$. For the study of associated Leontovich-Maxwell operators, we introduce in Section \ref{s:mrt}  an abstract `$S$-weighted'  Maxwell operators 
$
M  \psi 
:=   
S^{-1} 
\begin{pmatrix}
 0 & \ii A \\  -\ii A & 0 
\end{pmatrix} 
$,
where $A$ is an abstract densely defined symmetric operator, and introduce in Section \ref{s:RT} the notion of a reduction tuple for $A^*$.
A reduction tuple is, roughly speaking, a surjective completion of Calkin's triple. We construct in Proposition \ref{p:M*MBT}
an m-boundary tuple for an abstract Maxwell operator $M^*$ using two mutually dual 
reduction tuples for $A^*$. In Section \ref{s:GIBCa}, this lead to to an abstract version of Leontovich-type boundary conditions and to  complete characterizations of the corresponding m-dissipative and essentially m-dissipative cases in Theorems \ref{t:CritGIMC} and \ref{t:aIBCm-d}.

With minor modifications our description of m-dissipative boundary conditions for abstract Maxwell operators (Theorem \ref{t:absM-dis}) is applicable to other types of wave equations. In this connection, let us note that abstract Maxwell operator $M$ and its adjoint $M^*$ are closely related to abstract Dirac-type operators considered in \cite{GGHT12}.

\section{Trace spaces for $\HH (\curl,\Om)$ and Calkin's triples}\label{s:Mo} 
 
In this section we collect and adapt to our needs the facts concerning trace spaces 
for the space $\HH (\curl,\Om)$ and the notions connected with Calkin's reduction operator.
The Lipschitz boundary $\pa \Om$ of $\Om$ is a 2-D closed surface with bi-Lipschitz  `differentiable-type' structure \cite{ACL17}. 
We use  for $s > 0$ the standard 
Hilbertian Sobolev spaces $\HH^s ( \Om) = W^{s,2} ( \Om, \CC^3)$ and $\HH_0^s ( \Om) = W^{s,2}_0 ( \Om, \CC^3)$, where $\HH_0^s ( \Om)$ is the closure in $\HH^s ( \Om)$ of the subspace $C_0^\infty (\Om,\CC^3)$ of smooth compactly supported in $\Om$ vector-fields. The space of generalized vector-fields $\HH^{-s} ( \Om) $ is dual to $\HH_0^s ( \Om) $ w.r.t. the pivot space $\LL^2 (\Om) = L^2  (\Om, \CC^3)$. 
The analogous spaces of scalar $\CC$-valued functions are denoted  for $s>0$ by  $H^s ( \Om) = W^{s,2} ( \Om; \CC)$ and $H^s_0 ( \Om) = W^{s,2}_0 ( \Om; \CC)$.

\subsection{Integration by parts for $\curlm$ and Calkin's reduction operator}
\label{s:curl}

The continuous imbedding $\HH^1 (\Om) \imb \HH (\curl, \Om) $ is dense (for this and the other basic facts listed below we refer to \cite{C96,BCS02,M03,M04,ACL17}). 
The (vector) trace  $\ga (\uu) = \uu\!\uph_{ \pa \Om}$, the tangential trace $\ga_\top (\uu) = - \n  \times \uu\!\uph_{\pa \Om} $, and the tangential component trace $\pi_\top (\uu) = - \n \times (\n  \times \uu\!\uph_{\pa \Om})$ first defined for $C^\infty (\overline{\Om};\CC^3)$-fields on the closure $\overline{\Om}$ of $\Om$ have unique extensions as continuous operators 
\[
\ga: \HH^1 (\Om) \to \HH^{1/2} (\pa \Om),  \quad
\ga_\top : \HH (\curl, \Om) \to \HH^{-1/2} (\pa \Om) , \quad
\pi_\top : \HH (\curl, \Om) \to \HH^{-1/2} (\pa \Om)  ,
\]
where we use the complex Hilbertian Sobolev spaces $\HH^{s} (\pa \Om) = W^{s,2} ( \pa \Om, \CC^3)$, $s \in (0,1]$, of vector-fields,  and their dual spaces of generalized vector-fields  $\HH^{-s} (\pa \Om)$. The duality is taken w.r.t.
the pivot space $ \LL^2 (\pa \Om)$.
Analogously, $H^s (\pa \Om) = W^{s,2} (\pa \Om; \CC)$, $s \in (0,1]$, are Sobolev spaces of scalar functions on $\pa \Om$, and $H^{-s} (\pa \Om) = W^{-s,2} (\pa \Om; \CC)$ are their dual spaces w.r.t. $L^2 (\pa \Om)$.
The scalar trace 
$\ga_0 (f) = f\!\uph_{ \pa \Om}$ is a continuous operator from 
$H^1 (\Om)$ to $H^{1/2} (\pa \Om)$.

For the boundary spaces $H^{s} (\pa \Om) $ and $\HH^{s} (\pa \Om) $ with $s>0$ and so for their duals, there exist many mutually equivalent  Hilbertian norms generated by various Lipschitz local coordinates. 
Through the paper, we assume that a certain choice of this family of norms is fixed in a consistent way. 

Consider the bounded selfadjoint operator $\uu (\cdot) \mapsto \ii \, \n (\cdot) \times \uu (\cdot)$ in $\LL^2 (\pa \Om)$.  The square of this operator is the orthogonal projection $ P_{\LLT} $ onto 
$ \LLt$. 
The closed subspace $\LLt \subset \LL^2 (\pa \Om)$ is invariant for the operator $\uu (\cdot) \mapsto \n (\cdot) \times \uu (\cdot)$. We denote the restriction of this operator to $\LLt $ by $\n_\times $.
So the operator $\n_\times : \LLt \to \LLt$ defined by $\n_\times \uu = \n  \times \uu $ is a unitary in the Hilbert space $\LLt $ and $(\n_\times)^* = - \n_\times $.

Following \cite{BCS02,M04} (with minor adjustments to the notation of  \cite{M03,ACL17}), let us consider alternative descriptions of the \emph{trace spaces} $\pi_\top \HH (\curl,\Om) := \{ \pi_\top (\uu) \ : \uu \in \HH (\curl,\Om) \}$ and $\ga_\top \HH (\curl,\Om) := \{ \ga_\top (\uu) \ : \uu \in \HH (\curl,\Om) \}$.

The space $H^{3/2} (\pa \Om) := \ga_0 H^2 (\Om)$ is a Hilbert space with the norm 
\[
\| f \|_{H^{3/2} (\pa \Om)} := \inf \{ \| g \|_{H^2 (\Om)} \ : \ f = \ga_0 (g) , \ g \in H^2 (\Om)\}
,\]
 and the space $H^{-3/2} (\pa \Om) $ is the dual of $H^{3/2} (\pa \Om) $ w.r.t. $L^2  (\pa \Om)$.
Equipping the sets 
$\VV_\ga :=  \ga_\top \HH^1 (\Om)$ and $\VV_\pi :=  \pi_\top \HH^1 (\Om)$
 with the norms 
$
\| \vv \|_{\VV_\ga (\VV_\pi)} 
:= \inf \{ \| \uu \|_{\HH^1 (\Om)} \ : \ \uu \in \HH^1 (\Om),  \ga_\top (\uu) = \vv  \text{  (resp., $\pi_\top (\uu) = \vv$) } \}, 
$
one obtains Hilbert spaces. Since $\HH^{1/2} (\pa \Om)$ is dense in $\LL^2 (\pa \Om)$ and $\n_\times \VV_\pi = \VV_\ga $, the spaces $\VV_\ga$ and $\VV_\pi$ are densely and continuously embedded in $\LLt $. 
Let  $\VV'_{\pi (\ga)}$ be the dual Hilbert space to $\VV_{\pi (\ga)}$ w.r.t. $\LLt $.  Note that the spaces $\VV_\pi$ and $\VV_\ga$ are generally different \cite{BCS02} (but in the particular case of smooth $\pa \Om$ they both can be naturally identified to the space of tangential vector fields of order 1/2). 

\begin{rem} \label{r:pairing}
The (sesquilinear) duality pairings $_{\Hs_{\pm}}\< \cdot|\star\>_{\Hs_{\mp}}$ of the rigged Hilbert space $\Hs_+ \imb  \Hs \imb \Hs_-$ will be denoted $\<\cdot|\star\>_\Hs $ independently of a choice of mutually dual spaces $\Hs_\pm$, e.g., the pairing of  $H^{\pm 3/2} (\pa \Om) $ with $H^{\mp 3/2} (\pa \Om) $ is denoted by $\<\cdot|\star\>_{L^2  (\pa \Om) } $, 
the pairing between $\VV_{\pi (\ga)}$ and $\VV'_{\pi (\ga)}$ by  $\<\cdot|\star\>_{\LLT} $. 
 Note that  $\<f|g\>_\Hs $ equals to the scalar product $ (f|g)_\Hs$ if $f,g \in \Hs$.
\end{rem}

The \emph{tangential gradient} $\grad_{\pa \Om}$ and the operator $\curlm_{\pa \Om} $ of \emph{surface vector-curl} can be defined \cite{C96,BCS02,M04} as continuous linear operators from $H^{1/2} (\pa \Om)$ to $\VV'_\ga$ and from $H^{1/2} (\pa \Om)$ to $\VV'_\pi$, respectively, by 
$\grad_{\pa \Om} \uu = \pi_\top (\grad \mathbf{U} ) $ and $ \quad \curlm_{\pa \Om} \uu =  \ga_\top (\grad \mathbf{U} ) $, 
where  $\mathbf{U} \in H^1 (\Om)$ is arbitrary and $\uu = \mathbf{U} \uph_{\pa \Om}$.  
The restrictions of these operators to $H^1 (\pa \Om)$ can be written in local coordinates in the same way \cite{BCS02} as it is done for the case of smooth $\pa \Om$ \cite{C96,M03} (the difference with the smooth case is that matrix-function representations of the metric in local coordinates and corresponding inverses have only $L^\infty$-regularity).
Besides, $\grad_{\pa \Om} \uph_{H^1 (\pa \Om)}$ and  $\curlm_{\pa \Om} \uph_{H^1 (\pa \Om)}  f = - \n \times   \grad_{\pa \Om} \uph_{H^1 (\pa \Om)} f $ are continuous operators from $H^1 (\pa \Om)$ to $\LLt$. Moreover, the norm 
$(\|f \|_{L^2(\pa \Om)}^2 + \| \grad f \|_{\LLT}^2)^{1/2}$ is equivalent to $\|f \|_{H^1 (\pa \Om)}$.
The restriction $\grad_{\pa \Om} \uph_{H^{3/2} (\pa \Om)}$ (restriction $\curlm_{\pa \Om}  \uph_{H^{3/2} (\pa \Om)}$) can \cite{BCS02} and will be considered  as a continuous operator from
$H^{3/2} (\pa \Om)$ to $\VV_\pi$ (resp., to $\VV_\ga$).

The surface divergence $\Div_{\pa \Om}:\VV'_{\pi} \to H^{-3/2} (\pa \Om) $   and the surface scalar curl-operator $\curl_{\pa \Om}: \VV'_{\ga} \to H^{-3/2} (\pa \Om)$ are continuous operators defined by variational formulae 
\begin{align*}
\<\Div_{\pa \Om} \uu | f \>_{L^2 (\pa \Om)}  = & - \< \uu | \grad_{\pa \Om} f \>_{\LLT} \ \text{ and } \  
\<\curl_{\pa \Om} \uu | f \>_{L^2 (\pa \Om)}  = & \< \uu | \curlm_{\pa \Om} f \>_{\LLT} 
\end{align*} 
valid for all  $f \in H^{3/2} (\pa \Om)$
(with the notation of $\#$-adjoint operators of Section \ref{s:MDAbstract}, one can write 
$\Div_{\pa \Om} := - (\grad_{\pa \Om} \uph_{H^{3/2}(\pa \Om)})^\# $ and $\curl_{\pa \Om} := (\curlm_{\pa \Om} \uph_{H^{3/2} (\pa \Om)})^\#$).

One can define the Laplace-Beltrami operator $\De^{\pa \Om}$  as the unique continuous operator from $H^1 (\pa \Om)$  to $H^{-1} (\pa \Om)$  satisfying 
$
\< -\De^{\pa \Om} f   |  g \>_{L^2 (\pa \Om)} 
= 
\< \grad_{\pa \Om} f |  \grad_{\pa \Om} g \> $ 
 for all $g \in 
H^1 (\pa \Om) $
Then the finite-dimensional space of locally constant functions $\KK_0$ is the null space of $\De^{\pa \Om}$. Moreover, $\De^{\pa \Om}$  has a compact resolvent as an operator in $H^{-1} (\pa \Om)$  with the domain $H^1 (\pa \Om)$ since $H^1 (\pa \Om) \imb \imb H^{-1} (\pa \Om)$, where $ \imb \imb$ denotes a compact imbedding.

The paper  \cite{BCS02} defines the spaces 
$ \HH^{-1/2} (\Div_{\pa \Om})  
  :=   \{ \vv \in \VV'_\pi \ :\ \Div_{\pa \Om} \vv \in H^{-1/2} (\pa \Om)\} $ and 
$
\HH^{-1/2} (\curl_{\pa \Om})  
:=   \{ \vv \in \VV'_\ga \ :\ \curl_{\pa \Om} \vv \in H^{-1/2} (\pa \Om)\} $ 
as Hilbert spaces with the  graph norms. 

The following result says that $ \HH^{-1/2} (\Div(\curl)_{\pa \Om})  $ are the trace spaces for $ \HH (\curlm,\Om)$:  
\begin{multline}
\text{$\ga_\top $ and $\pi_\top$ are continuous surjective operators from $ \HH (\curlm,\Om)$} \\ \text{ onto 
$\HH^{-1/2} (\Div_{\pa \Om})$ and
$\HH^{-1/2} (\curl_{\pa \Om})$, respectively (cf. Section \ref{s:Mresults}).}  \label{e:TrTh}
\end{multline} 
This result is well known for smooth boundaries $\pa \Om$ \cite{C96}.  It was obtained by Buffa, Costabel, and Sheen \cite{BCS02} for Lipschitz domains $\Om$ (under an additional restriction that $\pa \Om$ is connected) from a preceding representation theorem of Tartar. The result of Mitrea \cite[Theorem 3.6]{M04} gives the proof of the case $\ga_\top$ for a general Lipschitz domain. 
The case $\pi_\top$ of (\ref{e:TrTh}) for a general Lipschitz domain follows easily from the case $\ga_\top$ proved in \cite{M04}. 



We consider the restrictions 
$\pi_{\top,2} := \pi_\top \uph_{\Himp (\curlm, \Om)}$   and  $\ga_{\top,2} := \ga_\top \uph_{\Himp (\curlm, \Om)}$ 
as operators from $\HH (\curlm, \Om)$ to $\LLt $, see (\ref{e:Himp}).
Since the trace $\pi_\top$ (the trace $\ga_\top$) is bounded from $\HH (\curlm , \Om)$ to $\VV'_{\ga(\pi)}$, one sees from the continuous imbedding $\LLt \imb \VV'_{\ga(\pi)}$ that  $\pi_{\top,2}$  (resp., $\ga_{\top,2}$) is closed as an operator to $\LLt$. 
The domain $\Himp (\curlm, \Om) $ of these two operators equipped with any of the two graph norms (which are equal to each other) becomes a Hilbert space. It follows from \cite{BBCD97} that 
$\HH^1 (\Om) \imb \Himp (\curlm, \Om)$ densely.

The integration by parts for the operator $\curlm$ \cite{C96,BCS02,M03} is given by the following formula  
\begin{gather} \label{e:IntByPpiga}
(\curlm \uu | \vv )_{\LL^2 (\Om)} - ( \uu | \curlm \vv )_{\LL^2 (\Om)}   = \< \pi_\top (\uu) | \ga_\top (\vv) \>_{\LLT} 
=  \< \ga_\top (\overline{\vv}) | \pi_\top (\overline{\uu} \>_{\LLT} , 
\end{gather}
for $\uu \in \HH^1 (\Om)$, $\vv \in \HH (\curlm, \Om)$,
where 
$\< \cdot | \star \>_{\LLT} $ is understood as a pairing of $\VV_\pi $ with $ \VV'_\pi$ in the first case 
and as a pairing of $\VV'_\ga $ with $\VV_\ga$ in the second.

This gives the following description  \cite{C96} of $\dom \curln$:
\begin{align*} 
\HH_0 (\curlm, \Om) = \{ \uu \in \HH (\curlm, \Om) \ :\ \pi_\top (\uu) = 0 \} = \{ \uu \in \HH (\curlm, \Om) \ :\ \ga_\top (\uu)  = 0 \} .
\end{align*}
Combining this with (\ref{e:TrTh}), one sees that the Hilbertian norms 
\begin{gather} \label{e:NoH-1/2}
\text{$|\uu|_\pi := \| \pi_\top^{-1} \uu \|_{\HH (\curlm, \Om) / \HH_0 (\curlm, \Om)} $ and $|\vv|_\ga := \| \ga_\top^{-1} \vv \|_{\HH (\curlm, \Om) / \HH_0 (\curlm, \Om)}$} 
\end{gather}
are equivalent to the original graph norms of the spaces 
 $\HH^{-1/2} (\curl_{\pa \Om})$ and $\HH^{-1/2} (\Div_{\pa \Om})$, respectively.

We will employ systematically the natural identifications between the Hilbert factor-space $\HH (\curlm, \Om) / \HH_0 (\curlm, \Om)$ and  the graph factor-space $\Gr \curlm / \Gr \curln$.
The operators 
\begin{multline*}
\text{
$\pi_\top^{-1}:\HH^{-1/2} (\curl_{\pa \Om}) \to \HH (\curlm, \Om) / \HH_0 (\curlm, \Om)$}
\\ \text{  and 
$\ga_\top^{-1}: \HH^{-1/2} (\Div_{\pa \Om}) \to \HH (\curlm, \Om) / \HH_0 (\curlm, \Om)$ are homeomorphisms.} 
\end{multline*}
Consequently, the sesquilinear forms 
\cite{BCS02,M04} 
$_\pi\!\<\cdot|\star\>_\ga : \HH^{-1/2} (\curl_{\pa \Om}) \times \HH^{-1/2} (\Div_{\pa \Om}) \to \CC$ and 
$_\ga\!\<\cdot|\star\>_\pi :  \HH^{-1/2} (\Div_{\pa \Om}) \times \HH^{-1/2} (\curl_{\pa \Om}) \to \CC$
defined by 
\begin{gather*}
_\pi\!\<\uu|\vv\>_\ga := (\curlm \pi_\top^{-1} \uu | \ga_\top^{-1} \vv )_{\LL^2 (\Om)}  - ( \pi_\top^{-1} \uu | \curlm \ga_\top^{-1} \vv )_{\LL^2 (\Om)}, \label{e:pi<>gaMax1} \\
_\ga\!\<\uu|\vv\>_\pi :=  -  ( \curlm \ga_\top^{-1} \uu | \pi_\top^{-1} \vv )_{\LL^2 (\Om)} + ( \ga_\top^{-1} \uu | \curlm \pi_\top^{-1} \vv )_{\LL^2 (\Om)}
\label{e:pi<>gaMax2}
\end{gather*}
are bounded.
Then the following integration by parts holds automatically
\begin{gather} \label{e:IntByPpiga2}
(\curlm \uu | \vv )_{\LL^2 (\Om)} - ( \uu | \curlm \vv )_{\LL^2 (\Om)}   =_\pi\!\< \pi_\top (\uu) | \ga_\top (\vv) \>_\ga 
= - _\ga\!\< \ga_\top (\uu) | \pi_\top (\vv) \>_\pi 
\end{gather}
for all $ \uu,\vv \in  \HH (\curlm, \Om)$. 
Comparing (\ref{e:IntByPpiga}) with (\ref{e:IntByPpiga2}) one sees
that  
$_{\pi(\ga)}\< \uu | \vv \>_{\ga(\pi)} = ( \uu | \vv )_{\LLT}  $ for all 
$\uu \in \VV_{\pi(\ga)}$, $ \vv  \in \VV_{\ga(\pi)}$. Since $\HH^1 (\Om) \imb \HH (\curlm,\Om)$ densely \cite{C96}, one sees that $\VV_\pi \imb \HH^{-1/2} (\curl_{\pa \Om})$ densely and $\VV_{\ga} \imb \HH^{-1/2} (\Div_{\pa \Om}) $ densely.
This proves the following statement (which, in the case of a connected $\pa \Om$ of topological genus 0, is contained implicitly in \cite{BCS02}).

\begin{prop}[cf. \cite{BCS02}]  \label{p:IntPart} The sesquilinear form $_{\pi}\< \cdot | \star \>_{\ga}$ is a unique bounded extension  to $\HH^{-1/2} (\curl_{\pa \Om}) \times \HH^{-1/2} (\Div_{\pa \Om})$ of the restriction $ ( \cdot | \star )_{\LLT} \uph_{\VV_{\pi} \times \VV_{\ga}} $ (an analogous statement is valid for the sesquilinear form $(-1) _{\ga}\< \cdot | \star \>_{\pi}$).
\end{prop}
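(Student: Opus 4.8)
The plan is to obtain the statement by assembling the two facts already established in the displayed computations preceding it. On the one hand, the sesquilinear form ${}_\pi\!\<\cdot|\star\>_\ga$ is bounded on $\HH^{-1/2} (\curl_{\pa \Om}) \times \HH^{-1/2} (\Div_{\pa \Om})$; on the other hand, the comparison of (\ref{e:IntByPpiga}) with (\ref{e:IntByPpiga2}) shows that it coincides with $( \cdot | \star )_{\LLT}$ on the product $\VV_\pi \times \VV_\ga$. Together these say exactly that ${}_\pi\!\<\cdot|\star\>_\ga$ is a bounded extension of $( \cdot | \star )_{\LLT} \uph_{\VV_\pi \times \VV_\ga}$, so it remains only to prove that such a bounded extension is unique.

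For uniqueness I would use the density $\VV_\pi \imb \HH^{-1/2} (\curl_{\pa \Om})$ and $\VV_\ga \imb \HH^{-1/2} (\Div_{\pa \Om})$ noted just above, together with the elementary principle that a bounded sesquilinear form is determined by its values on a dense product subspace. Concretely, let $B$ be any bounded sesquilinear form on $\HH^{-1/2} (\curl_{\pa \Om}) \times \HH^{-1/2} (\Div_{\pa \Om})$ agreeing with $( \cdot | \star )_{\LLT}$ on $\VV_\pi \times \VV_\ga$; then $B$ and ${}_\pi\!\<\cdot|\star\>_\ga$ agree on $\VV_\pi \times \VV_\ga$. Given $\uu \in \HH^{-1/2} (\curl_{\pa \Om})$ and $\vv \in \HH^{-1/2} (\Div_{\pa \Om})$, pick $\uu_n \in \VV_\pi$ and $\vv_n \in \VV_\ga$ converging to $\uu$ and $\vv$ in the respective norms, and write $B(\uu_n,\vv_n) - B(\uu,\vv) = B(\uu_n - \uu, \vv_n) + B(\uu, \vv_n - \vv)$; bounding each summand by the continuity constant of $B$ (the convergent sequence $(\vv_n)$ being norm-bounded) one gets $B(\uu,\vv) = \lim_n B(\uu_n,\vv_n)$, and the identical estimate for ${}_\pi\!\<\cdot|\star\>_\ga$ produces the same limit. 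Hence $B = {}_\pi\!\<\cdot|\star\>_\ga$ throughout, which is the claimed uniqueness.

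The companion assertion for $(-1) {}_\ga\!\<\cdot|\star\>_\pi$ follows by the identical argument with the two trace spaces (and the subspaces $\VV_\ga$, $\VV_\pi$) interchanged; the relevant agreement on the dense product subspace is again read off from (\ref{e:IntByPpiga2}) and (\ref{e:IntByPpiga}), the factor $-1$ being the normalization that makes the restriction equal $( \cdot | \star )_{\LLT}$ rather than its negative. In truth there is no substantial obstacle in this proposition: boundedness of ${}_\pi\!\<\cdot|\star\>_\ga$, its agreement with $( \cdot | \star )_{\LLT}$ on $\VV_\pi \times \VV_\ga$, and the density of both factor spaces have all been proved before the statement, so the result is merely their assembly through the routine density-and-continuity argument above. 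The only points that call for a little care are invoking \emph{joint} continuity, rather than separate continuity in each slot, when passing to the limit in both arguments simultaneously, and keeping track of the sign in the companion form ${}_\ga\!\<\cdot|\star\>_\pi$.
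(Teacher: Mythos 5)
Your proposal is correct and follows essentially the same route as the paper: the paper likewise obtains the proposition by combining the boundedness of $_{\pi}\<\cdot|\star\>_{\ga}$, its agreement with $(\cdot|\star)_{\LLT}$ on $\VV_\pi\times\VV_\ga$ read off from (\ref{e:IntByPpiga}) and (\ref{e:IntByPpiga2}), and the density of $\VV_\pi$ and $\VV_\ga$ in the respective trace spaces. You merely make explicit the routine density-and-joint-continuity argument for uniqueness that the paper leaves implicit.
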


In Section \ref{s:RT},  we extend the simplified notation  $\<\cdot| \star\>_{\LLT}$ of Remark \ref{r:pairing} to the both pairings  $_\ga\!\<\cdot|\star \>_\pi $ and $_\pi\!\<\cdot|\star\>_\ga $ of the tangential trace spaces  (for the justification see Remark \ref{r:<>H}).




Let $A$ be a  symmetric closed densely defined operator in an abstract  Hilbert space $\Hc$.

\begin{defn}[\cite{C39}] \label{d:CT}
Assume that an auxiliary Hilbert space $\Hs$, a closed operator $\Gb':\dom \Gb' \subseteq \Gr A^* \to \Hs$, and a unitary operator $W:\Hs \to \Hs$ satisfy the following conditions:
\item[(C1)] 
$\dom \Gb'$ is dense in the Hilbert space $\Gr A^*$ (equipped with the graph norm $\| \cdot \|_{\Gr A^*}$), 
\item[(C2)] 
the validity of 
\begin{gather*} 
(A^* f |g)_{\Hc} - (f,g')_{\Hc} = ( \Gb' \{f,A^*f\}| h )_{\Hs}  \quad \forall \{f,A^*f \} \in \dom \Gb' 
\end{gather*}
for a certain $\{g,g',h\} \in \Hc \times \Hc \times \Hs$ is equivalent to  
$\{g,g'\} \in \dom \Gb'$ and $h = W \Gb' \{g,g'\} $.

Then $\Gb'$ is called a \emph{reduction operator for} $A^*$, $\Hs$ is its \emph{target space}, $W$ is called a \emph{rotation}.
\end{defn}

This is an equivalent reformulation of the original Calkin definition  \cite{C39} of reduction operator. We refer to \cite{HW12,DM17} for the  contemporary point of view on this theory.

\begin{rem} \label{r:Ga}
Following \cite{C39},  one can associate with $\Gb'$ the operator $\Gb f := \Gb' \{f,A^*f\} $
defined on the set of all $f \in \dom A^*$ such that $\{f,A^*f\} \in \dom \Gb'$. This leads to a shorter notation, which we will systematically use for various operators with roles similar to that of $\Gb'$ in (C2).
In this notation the operator $\Gb:\dom \Gb \subseteq \dom A^* \to \Hs$ is closed and 
densely defined in the Hilbert space $\dom A^*$ equipped with the graph norm $\| \cdot\|_{\dom A^*}$. Condition (C2) takes the following form: the validity of
$(A^* f |g)_{\Hc} - (f|g')_{\Hc} = ( \Gb f| h )_{\Hs}$  \quad $\forall f \in \dom \Gb $
for a certain $\{g,g',h\} \in \Hc \times \Hc \times \Hs$ is equivalent to  
$g \in \dom \Gb$, $g' = A^* g$, and $h = W \Gb g$.
\end{rem}

Taking into account the aforementioned identification of $\Gb'$ and $\Gb$,
each of the triples $(\Hs,\Gb,W)$ and $(\Hs,\Gb',W)$ we will be called a \emph{Calkin triple for $A^*$}.

The definition of a Calkin triple $(\Hs,\Gb,W)$ implies  \cite{C39}
the following version of `abstract  integration by parts' 
$(A^*f |g)_\Hc - (f |A^*g)_\Hc  =  ( \Gb f | W \Gb g)_{\Hs}$ 
 for all $f,g \in \dom \Gb$,
and in turn, the following properties:
$\ran \Gb$ is dense in $\Hs$,   $\dom A = \ker \Gb \subseteq \dom \Gb$, 
$W^* = - W = W^{-1}$,
and the fact that $(\Hs, U W \Gb, U W U^{-1})$ is a Calkin triple for every unitary operator $U$ in $\Hs$. 

We will say that \emph{the Calkin triples $(\Hs,\Gb,W)$ and 
$(\Hs, \ii W \Gb,W)$ are dual to each other}.

Section \ref{s:GIBCa} shows that Leontovich-type boundary conditions are written in terms of two mutually dual Calkin triples constructed in Corollary \ref{c:curlRT} for the operator $\curlm$.

\subsection{Mixed-norm duality and $\#$-adjoint operators}
\label{s:MDAbstract}

In this subsection we give an abstract version of the duality connecting the trace spaces  $\HH^{-1/2} (\curl_{\pa \Om})$ and 
$ \HH^{-1/2} (\Div_{\pa \Om}) $ (see also Corollary \ref{c:DevDuality}).
(Under additional assumptions that $\pa \Om$ is $C^{1,1}$-boundary  \cite[Proposition 2.3]{C96} or that $\pa \Om$ is a connected Lipschitz boundary of topological genus $0$ \cite{BCS02}, the duality of these trace spaces was described in a specific way via the Hodge decompositions. For arbitrary Lipschitz boundary and in more general manifold settings, another approach was taken by M.~Mitrea \cite{M04}, namely, an analogue of the  space $\HH^{-1/2} (\curl_{\pa \Om})$ was essentially defined as the space of linear functionals $(\HH^{-1/2} ( \Div_{\pa \Om}))'$.)

Let $\Hs_j$, $j=1,2$, be Hilbert spaces.
A sesquilinear form $\af (\cdot , \star)$
defined on $\Hs_1 \times \Hs_2$ is said to be a \emph{perfect pairing} of $\Hs_1$ with $\Hs_2$ if the following conditions are fulfilled:
(i) for each $h_1 \in \Hs_1 \setminus\{0\}$ there exists $h_2 \in \Hs_2$ such that $\af (h_1,h_2) \neq 0$,
(ii) $ \|h_2\|_{\Hs_2} = \sup \{ |\af(h_1,h_2)| \ : \ \|h_1\|_{\Hs_1} \le 1\} $. Note that conditions (i) and (ii)
are equivalent to the existence of a unitary operator $U:\Hs_2 \to \Hs_1$ such that $\af (h_1,h_2) = (h_1|Uh_2)_{\Hs_1}$. If $\af (\cdot , \star)$ is a perfect pairing of $\Hs_1$ with $\Hs_2$, then  $\overline{\af (h_1 , h_2)}=(h_2|U^{-1}h_1)_{\Hs_2}$ is a perfect pairing of $\Hs_2$ with $\Hs_1$, which is called an \emph{adjoint pairing} to $\af (\cdot , \star)$.

Let $\Hs$ be an abstract pivot Hilbert space. Through this subsection we assume that:
\begin{gather}
\text{$\Hs_{-,+}$ is a Hilbert space such that 
$\wt \Hs_{-,+}:=\Hs \cap \Hs_{-,+}$ is dense in both $\Hs$ and $\Hs_{-,+}$,} 
\label{e:tHs0} 
\\
\text{and the (quadratic) form $\af_{-,+} [h]:=(h|h)_{\Hs_{-,+}}$,
$h \in \wt \Hs_{-,+}$, is closed in $\Hs$ } \label{e:tHs2}
\end{gather}
(see \cite{Kato13} for the basic facts concerning sesquilinear and quadratic forms). 

\begin{defn} \label{d:m-n}
If (\ref{e:tHs0})-(\ref{e:tHs2}) are satisfied, we say that  $\Hs_{-,+}$ 
is a \emph{Hilbert space with a mixed-order norm} (in short, \emph{an m-order space}) associated with the pivot space $\Hs$.
\end{defn}

Let us consider the linear subspace $\wt \Hs_{+,-} \subseteq \Hs$ consisting of all $h \in \Hs$ such that 
\begin{gather}\label{e:defH+-}
\| h \|_{\Hs_{+,-}} < +\infty , \quad \text{ where $\| h \|_{\Hs_{+,-}}:= \sup \{ |(g | h)_{\Hs} | \ : \ g \in \wt \Hs_{-,+}  , \ \| g \|_{\Hs_{-,+}}  \le 1 \} $,}\\
\text{and define the \emph{dual space $(\Hs_{+,-}, \| \cdot \|_{ \Hs_{+,-}})$ to
$\Hs_{-,+}$  w.r.t.} $\Hs$ as \hspace{10em}} \notag \\
\text{\hspace{12em}
the completion of the normed space $(\wt \Hs_{+,-}, \| \cdot \|_{\Hs_{+,-}})$. }\label{e:dual+-}
\end{gather}



The next proposition proves that the  dual space $\Hs_{+,-}$ is an m-order  space associated with $\Hs$ and that the dual to $\Hs_{+,-}$ is the original m-order space $\Hs_{-,+}$.


\begin{prop} \label{p:MixOrder} 

(i) There exist a unique positive selfadjoint operator $\Sop$ in $\Hs$
such that 
$\wt \Hs_{\mp,\pm} = \dom \Sop^{\pm 1} = \ran \Sop^{\mp 1}$ and $\| \Sop^{\pm 1} h \|_{\Hs} = \|h\|_{\Hs_{\mp,\pm}}$ for all $h \in \wt \Hs_{\mp,\pm}$.
In particular, $(\Hs_{+,-}, \| \cdot \|_{\Hs_{+,-}})$ is a Hilbert space.


\item[(ii)] The inner product $(\cdot|\star)_\Hs$ of $\Hs$ restricted to $\wt \Hs_{-,+} \times \wt \Hs_{+,-}$ has a unique  extension to a bounded sesquilinear form $_{\Hs_{-,+}}\<\cdot|\star\>_{\Hs_{+,-}}$ on $\Hs_{-,+} \times \Hs_{+,-} $. 
The form $_{\Hs_{-,+}}\<\cdot|\star\>_{\Hs_{+,-}}$ is a perfect pairing  of $\Hs_{-,+}$ with $\Hs_{+,-} $. 
 
\item[(iii)] The quadratic form $\af_{+,-} [\cdot]:= \| \cdot \|_{\Hs_{+,-}}^2$ defined on $\wt \Hs_{+,-}$ is closed in $\Hs$ and the associated sesquilinear form $\af_{+,-} (\cdot|\star)$ is an inner product in $\wt \Hs_{+,-}$. The m-order space dual to $\Hs_{+,-}$ w.r.t.  $\Hs$ can be identified with $\Hs_{-,+}$ (in the sense of the spaces of fundamental sequences in $(\wt \Hs_{-,+} , \| \cdot \|_{\Hs_{-,+}})$). The associated perfect pairing $_{\Hs_{+,-}}\<\cdot|\star\>_{\Hs_{-,+}}$
is adjoint to $_{\Hs_{-,+}}\<\cdot|\star\>_{\Hs_{+,-}}$.

\item[(iv)] There exist a unique unitary operator $\Uop_{\Hs \to \Hs_{\mp,\pm}} $ from $\Hs$ to $\Hs_{\mp,\pm}$ such that $\Uop_{\Hs \to \Hs_{\mp,\pm}} f = \Sop^{\mp 1}  f$ for all $f \in \wt \Hs_{\pm,\mp}$. The inverse unitary operator $\Uop_{\Hs_{\mp,\pm} \to \Hs} := \Uop_{\Hs \to \Hs_{\mp,\pm}}^{-1}$ \linebreak satisfies 
$\Uop_{\Hs_{\mp,\pm} \to \Hs} f = \Sop^{\pm 1} \ f$ for all $f \in \wt \Hs_{\mp,\pm}$.
 The operator $\Uop_{\Hs_{\mp,\pm} \to \Hs_{\pm,\mp}} :=  \Uop_{\Hs \to \Hs_{\pm,\mp}} \Uop_{\Hs_{\mp,\pm} \to \Hs}$ is a unitary operator from $\Hs_{\mp,\pm}$ to $\Hs_{\pm,\mp}$.
\end{prop}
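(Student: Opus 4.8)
The plan is to reduce the entire proposition to the spectral theory of a single positive selfadjoint operator attached to the closed form $\af_{-,+}$. First I would note that, by (\ref{e:tHs0})--(\ref{e:tHs2}), $\af_{-,+}[h]=\|h\|_{\Hs_{-,+}}^2$ is a densely defined, nonnegative, closed quadratic form in the pivot space $\Hs$ with form domain $\wt\Hs_{-,+}$. Kato's first representation theorem (\cite{Kato13}) then supplies a unique nonnegative selfadjoint operator $T$ in $\Hs$ with $\dom T^{1/2}=\wt\Hs_{-,+}$ and $\af_{-,+}[h]=\|T^{1/2}h\|_\Hs^2$. Setting $\Sop:=T^{1/2}$ gives a selfadjoint operator with $\dom\Sop=\wt\Hs_{-,+}$ and $\|\Sop h\|_\Hs=\|h\|_{\Hs_{-,+}}$; it is \emph{positive} (that is, $\ker\Sop=0$), since $\Sop h=0$ forces $\|h\|_{\Hs_{-,+}}=0$. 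Uniqueness of $\Sop$ follows from the uniqueness in Kato's theorem together with uniqueness of the positive square root. This settles the ``$-,+$'' half of part (i).

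The heart of the argument is to identify $\wt\Hs_{+,-}$ and its norm with the inverse operator $\Sop^{-1}$. Since $\Sop$ is positive selfadjoint, so is $\Sop^{-1}$, with $\dom\Sop^{-1}=\ran\Sop$ dense in $\Hs$ (as $\overline{\ran\Sop}=(\ker\Sop)^\perp=\Hs$). In the supremum defining $\|h\|_{\Hs_{+,-}}$ I would substitute $g=\Sop^{-1}k$ with $k\in\ran\Sop$, so that the constraint $\|g\|_{\Hs_{-,+}}=\|\Sop g\|_\Hs\le1$ becomes $\|k\|_\Hs\le1$; moving $\Sop^{-1}$ onto $h$ by selfadjointness and using the density of $\ran\Sop$ yields $\sup\{|(\Sop^{-1}k\mid h)_\Hs|:k\in\ran\Sop,\ \|k\|_\Hs\le1\}=\|\Sop^{-1}h\|_\Hs$ whenever $h\in\dom\Sop^{-1}$. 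Conversely, finiteness of this supremum exhibits $k\mapsto(\Sop^{-1}k\mid h)_\Hs$ as a bounded functional on the dense set $\ran\Sop=\dom\Sop^{-1}$, hence $h\in\dom(\Sop^{-1})^{*}=\dom\Sop^{-1}$. Thus $\wt\Hs_{+,-}=\dom\Sop^{-1}=\ran\Sop$ and $\|h\|_{\Hs_{+,-}}=\|\Sop^{-1}h\|_\Hs$. Because this norm is manifestly Hilbertian, its completion $\Hs_{+,-}$ is a Hilbert space, completing part (i).

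Parts (iv), (ii), (iii) then fall out quickly. For (iv): $\Sop$ is, by the norm identity, an isometry of $(\wt\Hs_{-,+},\|\cdot\|_{\Hs_{-,+}})$ onto the dense subspace $\ran\Sop$ of $\Hs$, and $\wt\Hs_{-,+}$ is dense in $\Hs_{-,+}$, so it extends uniquely to a unitary $\Uop_{\Hs_{-,+}\to\Hs}$; symmetrically $\Sop^{-1}$ extends to a unitary $\Uop_{\Hs_{+,-}\to\Hs}$, and the remaining operators are compositions of these, hence unitary. For (ii): for $g\in\wt\Hs_{-,+}$, $h\in\wt\Hs_{+,-}$ one has $(g\mid h)_\Hs=(\Sop g\mid\Sop^{-1}h)_\Hs=(\Uop_{\Hs_{-,+}\to\Hs}g\mid\Uop_{\Hs_{+,-}\to\Hs}h)_\Hs$, using $g,\Sop^{-1}h\in\dom\Sop$ and selfadjointness; the right-hand side extends to a bounded sesquilinear form on $\Hs_{-,+}\times\Hs_{+,-}$, unique by density, and writing it as $(g\mid U h)_{\Hs_{-,+}}$ with the unitary $U:=\Uop_{\Hs\to\Hs_{-,+}}\Uop_{\Hs_{+,-}\to\Hs}$ and invoking the criterion recorded in the definition of perfect pairing shows it is a perfect pairing. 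For (iii): the identity $\|h\|_{\Hs_{+,-}}=\|\Sop^{-1}h\|_\Hs$ shows $\af_{+,-}$ is the closed form of the positive selfadjoint operator $\Sop^{-2}$, so it is closed and its polarization is an inner product; rerunning the construction of (i)--(ii) with $\Sop^{-1}$ in the role of $\Sop$ identifies the space dual to $\Hs_{+,-}$ with $\dom(\Sop^{-1})^{-1}=\dom\Sop=\wt\Hs_{-,+}$ completed, i.e.\ with $\Hs_{-,+}$, and the adjointness of the two pairings is just the conjugate symmetry of $(\cdot\mid\star)_\Hs$ on $\wt\Hs_{-,+}\times\wt\Hs_{+,-}$.

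The main obstacle is precisely the mixed-order feature: unlike in a Gelfand triple there is \emph{no} continuous imbedding $\Hs_{-,+}\imb\Hs$, so $\Sop$ is a genuinely unbounded positive selfadjoint operator whose spectrum may accumulate at $0$, and the usual rigged-space shortcuts are unavailable. The delicate point is therefore the exact identification $\wt\Hs_{+,-}=\dom\Sop^{-1}$ of the second paragraph -- both the finiteness criterion and the equality of norms -- which rests essentially on the density of $\ran\Sop$ and the selfadjointness of $\Sop^{-1}$, and on taking the completions in the right topology (the completion of $\wt\Hs_{+,-}$ under $\|\Sop^{-1}\cdot\|_\Hs$ is unitarily the whole of $\Hs$, not merely a graph-norm closure).
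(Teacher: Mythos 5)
Your proposal is correct and follows essentially the same route as the paper: the paper applies Kato's second representation theorem to the closed form $\af_{-,+}$ to obtain $\Sop$ with $\dom\Sop=\wt\Hs_{-,+}$ and $\|\Sop h\|_\Hs=\|h\|_{\Hs_{-,+}}$, and then delegates everything else to the spectral theory of $\Sop$ (citing the scale-of-spaces construction in \cite[Appendix to IX.4, Example 3]{RSII}). Your explicit verification that $\wt\Hs_{+,-}=\dom\Sop^{-1}=\ran\Sop$ with $\|h\|_{\Hs_{+,-}}=\|\Sop^{-1}h\|_\Hs$, via the adjoint-domain characterization and the density of $\ran\Sop$, is precisely the content the paper leaves implicit, and the remaining parts (ii)--(iv) are derived as in the paper.
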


\begin{proof}
First, we apply  the second representation theorem of the theory of Friedrichs' extensions \cite[Theorem VI.2.23]{Kato13} to the closed positive form $\af_{-,+}[\cdot] = (\cdot|\cdot)_{\Hs_{-,+}}$ in $\Hs$. This produces a positive selfadjoint operator $\Sop:\dom \Sop \subseteq \Hs \to \Hs$ such that $\wt \Hs_{-,+} = \dom \Sop$ and $(\Sop^2 f|f)_\Hs = (f|f)_{\Hs_{-,+}}$ for $f \in \dom \Sop^2$. Applying then the spectral decomposition theorem \cite{AG} to $\Sop$
one obtains 
all the desired statements (cf. \cite[Appendix to IX.4, Example 3]{RSII}).
\end{proof}

\begin{rem} \label{r:<>H}
When this does not lead to a confusion,
we use the notation $\<\cdot|\star\>_\Hs$ for each of the pairings $_{\Hs_{\mp,\pm}}\<\cdot|\star\>_{\Hs_{\pm,\mp}}$ constructed in the above statements (ii)-(iii) as well as for the inner product $(\cdot|\star)_\Hs$.
So the map $\<\cdot|\star\>_\Hs: (\Hs_{-,+} \times \Hs_{+,-}) \cup (\Hs_{+,-} \times \Hs_{-,+}) 
\cup \Hs^2 
\to \CC$, which will be called \emph{$\Hs$-pairing}, has the sesquilinear property.
\end{rem}

Proposition \ref{p:MixOrder} justifies the following definition. 

\begin{defn}\label{d:mutualduality}
Two m-order spaces  $\Hs_{-,+}$ and $\Hs_{+,-}$ associated with a pivot Hilbert space $\Hs$ are called \emph{m-order spaces dual to each other w.r.t.} $\Hs$ if 
$\| h \|_{\Hs_{\pm,\mp}} = \sup_{g \in \wt \Hs_{\mp,\pm} \setminus \{0\} }  \frac{|(g | h)_{\Hs}|}{\| g \|_{\Hs_{\mp,\pm}}}$  \ , \ 
where $\wt \Hs_{\mp,\pm} = \Hs \cap \Hs_{\mp,\pm}$.
In this case, we also say that \emph{the norms $\| \cdot\|_{\Hs_{-,+}}$ and $\| \cdot\|_{\Hs_{+,-}}$ are dual to each other w.r.t.} $\Hs$.

\end{defn}

The definition of duality of norms is justified by the following simple observation.
Assume that two spaces $\Hs_{\mp,\pm}$ are m-order spaces dual to each other w.r.t. $\Hs$. Let $\|\cdot\|'_{\Hs_{-,+}}$ be a Hilbertian norm in $\Hs_{-,+} $ equivalent to the original norm $\|\cdot\|_{\Hs_{-,+}}$. Then  $(\Hs_{-,+}, \|\cdot\|'_{\Hs_{-,+}})$ is an m-order space associated with $\Hs$. Moreover, there exists a unique Hilbertian norm $\|\cdot\|'_{\Hs_{+,-}}$ in $\Hs_{+,-} $ equivalent to $\|\cdot\|_{\Hs_{+,-}}$ such that the two spaces $(\Hs_{\mp,\pm}, \|\cdot\|'_{\Hs_{\mp,\pm}})$ are m-order spaces dual to each other w.r.t. $\Hs$.

In the particular case, where $\Hs_{-,+} = \wt \Hs_{-,+}$ (and so $\Hs_{-,+}$ is continuously imbedded into $\Hs$), the imbeddings 
$\Hs_{-,+} \imb \Hs \imb \Hs_{+,-}$ form the standard  rigged Hilbert space. Formally, we do not exclude such a case $\Hs_{-,+} \imb \Hs $ from our abstract considerations,
and so the standard  rigged Hilbert space is a particular case of Definition \ref{d:mutualduality}. 
However, we are interested primarily in the situation where $\Hs_{-,+} \not \imb \Hs$ and $\Hs_{+,-} \not \imb \Hs$, which arises for the trace spaces of $\HH (\curlm,\pa\Om)$ (see Corollary \ref{c:DevDuality}).

Let  two Hilbert spaces $\Hc_{\mp,\pm}$ (two Hilbert spaces $\Yc_{\mp,\pm}$) be m-order spaces dual to each other w.r.t. a pivot space $\Hc$ (resp., w.r.t. a pivot space $\Yc$). Let  
$T:\dom T \subseteq \Hc_{-,+}  \to \Yc_{+,-}$ be a densely defined in $\Hc_{-,+}$ operator. Then there exists a unique operator $T^\#: \dom T^\# \subseteq \Yc_{-,+}  \to \Hc_{+,-} $ defined by  the equivalence (the notation of Remark \ref{r:<>H} is used)
\begin{multline} \label{e:Am}
\text{$g \in \dom T^\#$ and $g' = T^\# g$ if and only if } \\ 
\{g,g'\} \in \Yc_{-,+} \times \Hc_{+,-} \text{ satisfies }\< T f|g\>_\Yc = \<  f|g'\>_\Hc \quad \text{ for all } f \in \dom T .
\end{multline}

Note that, if $T$ is closable, then $(T^\#)^\#$ is the closure $\overline T$ of $T$.

We  use $T^\#$  in several different situations where
the two triples of spaces $(\Hc_{-,+},\Hc,\Hc_{+,-})$ and $(\Yc_{-,+},\Yc,\Yc_{+,-})$ are connected with each other. Namely, we use $T^\#$ in the following cases: 
\begin{itemize}
\item[(A1)] $(\Hc_{-,+},\Hc,\Hc_{+,-}) = (\Yc_{-,+},\Yc,\Yc_{+,-}) = (\Hs_{-,+},\Hs,\Hs_{+,-})$; 
\item[(A2)] $(\Hc_{-,+},\Hc,\Hc_{+,-}) = (\Hs,\Hs,\Hs)$ and $(\Yc_{-,+},\Yc,\Yc_{+,-}) = (\Hs_{+,-},\Hs,\Hs_{-,+})$;
\item[(A3)] $(\Hc_{-,+},\Hc,\Hc_{+,-}) = (\Hs_{-,+},\Hs,\Hs_{+,-})$ and $(\Yc_{-,+},\Yc,\Yc_{+,-}) = (\Hs,\Hs,\Hs)$;
\item[(A4)] $(\Hc_{-,+},\Hc,\Hc_{+,-}) = (\Hs_{-,+},\Hs,\Hs_{+,-})$ and $(\Yc_{-,+},\Yc,\Yc_{+,-}) = (\Hs_{+,-},\Hs,\Hs_{-,+})$.
\end{itemize}

In the cases (A1)-(A4),  $T^\#$ is called \emph{$\Hs$-pairing-adjoint operator to} $T$. In the case (A1), an operator $T$ is called \emph{$\Hs$-pairing-selfadjoint} if $T = T^\#$.  (In the `trivial' case $(\Hc_{-,+},\Hc,\Hc_{+,-}) = (\Hs,\Hs,\Hs) = (\Yc_{-,+},\Yc,\Yc_{+,-}) $, one has $T^\# = T^*$.)

The $\Hs$-pairing-adjoint operator $T^\#$ is invariant under a replacement of the norms of $\Hs_{\mp,\pm}$ with a pair of equivalent mutually dual norms $\|\cdot \|'_{\Hs_{\mp,\pm}}$.
In the case (A1),  $T^\#$  is an operator from $\Hs_{-,+}$ to $\Hs_{+,-}$ (similarly to $T$), while the standard adjoint operator $T^*$  acts conversely from $\Hs_{+,-}$ to $\Hs_{-,+}$.


\section{Riesz bases in trace spaces of $\HH (\curlm,\Om)$}
\label{s:Upi}


Recall that $\De^{\pa \Om}:H^1 (\pa \Om) \to H^{-1} (\pa \Om)$ is the continuous Laplace-Beltrami operator with a compact resolvent
and that $\ker \De^{\pa \Om} = \KK_0$ is a finite-dimensional space.

Let us introduce the Hilbert factor-spaces 
$H^0_{\pa \Om}:= L^2 (\pa \Om) / \KK_0$ and $H^s_{\pa \Om}:= H^s (\pa \Om) / \KK_0$, $s \in (0,1]$,
and the Hilbert spaces $H^{-s}_{\pa \Om}$ as the duals to $H^s_{\pa \Om}$ w.r.t. the pivot space $H^0_{\pa \Om}$.

The operator $\De^{\pa \Om}$ can be considered as a homeomorphism from 
$H^1_{\pa \Om}$ to $H^{-1}_{\pa \Om}$. Indeed, it is easy to see from its definition that $\De^{\pa \Om}$ is an $L^2 (\pa \Om)$-paring selfadjoint operator, i.e., $\De^{\pa \Om} = (\De^{\pa \Om})^\#$ in the sense of (\ref{e:Am}) .
Hence, $\KK_0\perp \De^{\pa \Om} H^1 (\pa \Om) $.

The quadratic form 
$  (f|f)_1 = \| \grad_{\pa \Om} f \|_{\LLT} $, $ f \in \dom (\cdot|\cdot)_1 :=  H^1_{\pa \Om} $, is closed in $H^0_{\pa \Om}$. It defines a unique nonpositive selfadjoint operator $\De_{\pa \Om}$ in $H^0_{\pa \Om}$ by the requirements that 
$H^1_{\pa \Om} = \dom (- \De_{\pa \Om})^{1/2}$ and $(-\De_{\pa \Om} f|f)_{H^0_{\pa \Om}} =  (f|f)_1$ for all 
$f \in \dom \De_{\pa \Om}$
(this follows, e.g., from the Friedrichs representation theorem and its refinement \cite[Theorems VI.2.1 and VI.2.23]{Kato13}).

It is obvious that $\De_{\pa \Om}$ is invertible and 
$\De_{\pa \Om} = \De^{\pa \Om} \uph_{\dom \De_{\pa \Om}} $. Consequently,  $\De_{\pa \Om}$ has a compact resolvent  and a purely discrete spectrum $\si (\De^{\pa \Om})$. Thus, there exists 
\begin{multline}
\text{an orthonormal basis $\{u_k\}_{k=1}^{\infty}$ in $H^0_{\pa \Om}$ such that}
\\
\text{ $-\De_{\pa \Om} u_k = \la_k^2 u_k$ and $\si (\De_{\pa \Om}) = \{-\la_k^2 \}_{k=1}^{\infty}$, \  where  \ $\la_k >0$, \ $k \in \NN$.} \label{e:basisDe}
\end{multline}

Consider the scale $H^s_{\De_{\pa \Om}}$, $s \in \RR$, of Hilbert spaces associated with $\De_{\pa \Om}$, i.e., we define $| \cdot |_s:  H^0_{\pa \Om} \to [0,+\infty]$ by
$| f |_s^2 := \sum_{k=1}^\infty  \la_k^{2s} | (f|u_k)_{H^0_{\pa \Om}} |^2 $
and define Hilbert spaces 
$(H^s_{\De_{\pa \Om}}, | \cdot |_s)$ by
$H^s_{\De_{\pa \Om}} :=
\{ f \in H^0_{\pa \Om} \ : \ | f |_s < \infty \}$  for $s \ge 0$, and Hilbert spaces  
$H^{-s}_{\De_{\pa \Om}}$ as the completions of 
$H^0_{\pa \Om}$ w.r.t. $ | \cdot |_{-s} $  for $s>0$.

For $s \in [-1,1]$, it follows  from \cite{GMMM11} that
 $H^s_{\De_{\pa \Om}}$ can be identified with $H^s_{\pa \Om}$ up to equivalence of the norms.
The orthogonal Hodge decomposition (\ref{e:HDL2}) of $\LLt$ can be written as 
\begin{gather} \label{e:HDL22}
\LLt =  \grad_{\pa \Om} H^1_{\De_{\pa \Om}} \oplus \KK_1  (\pa \Om) \oplus \curlm_{\pa \Om} H^1_{\De_{\pa \Om}} ,
\end{gather}
where  $ \KK_1 (\pa \Om)$ is a space of dimension $b_1 (\pa \Om) < \infty$. 
The Hodge decompositions of the trace spaces of $\HH (\curlm,\Om)$ \cite{BHPS03} can be written as the following direct sums 
\begin{gather} \label{e:HDcurl}
\HH^{-1/2} (\curl_{\pa \Om}) = \grad_{\pa \Om} H^{1/2} _{\De_{\pa \Om}} \dot + \KK_1  (\pa \Om) \dot + \curlm_{\pa \Om} H^{3/2}_{\De_{\pa \Om}} 
,  \\
\HH^{-1/2} (\Div_{\pa \Om}) = \grad_{\pa \Om} H^{3/2}_{\De_{\pa \Om}} \dot + \KK_1  (\pa \Om) \dot + \curlm_{\pa \Om} H^{1/2} _{\De_{\pa \Om}}  
 ,
\label{e:HDdiv}
\end{gather}
where $ \grad(\curlm)_{\pa \Om} H^{1/2}_{\De_{\pa \Om}}$ and $ \grad (\curlm)_{\pa \Om} H^{3/2}_{\De_{\pa \Om}}$ are closed subspaces of the corresponding Hilbert spaces (see also \cite{BCS02} for the detailed treatment of the case $b_1 (\pa \Om) = 0$, $b_0 (\pa \Om) = 1$).

Taking into account $|\cdot|_1^2 = (\cdot|\cdot)_1$, one sees that the restrictions 
\begin{gather} \label{e:GradCurl}
\text{$\Grad_{\pa \Om}:=\grad_{\pa \Om} \uph_{H^1_{\De_{\pa \Om}}}$ and 
$\Curl_{\pa \Om}:= \curlm_{\pa \Om} \uph_{H^1_{\De_{\pa \Om}}}$}
\end{gather}
are isometric operators from $H^1_{\De_{\pa \Om}}$ to $\LLt$.
The $\LLt$-pairing adjoints $\Grad_{\pa \Om}^\# = - \Div_{\pa \Om} \uph_{\LLT}$ and $\Curl_{\pa \Om}^\# = \curl_{\pa \Om} \uph_{\LLT}$ are continuous operators from  $\LLt$ to $H^{-1}_{\De_{\pa \Om}}$, see Section \ref{s:curl} for the definitions of $ \Div(\curl)_{\pa \Om}$.
We consider in $\LLt$ the operator 
\[
\Dev := \Grad_{\pa \Om} \ (\Div_{\pa \Om}\!\uph_{\LLT})  - \Curl_{\pa \Om} \ (\curl_{\pa \Om} \!\uph_{\LLT}) .
\]
Theorem \ref{t:basis} shows that $\Dev$ is an analogue of the Laplace-de Rham operator, cf. \cite{C96}.

Starting from the orthonormal basis $\{u_k\}_{k=1}^{\infty}$ 
of eigenfunctions of $\De_{\pa \Om}$ in $H^0_{\pa \Om}$, we put
\begin{gather}
\text{$\vv_{-j} := \la_j^{-1} \grad_{\pa \Om} u_j$, \quad $\vv_j := \la_j^{-1} \curlm_{\pa \Om} u_j $, \qquad $j \in \NN$,} \label{e:vj}\\
\text{$\la_{-j}:=\la_j^{-1}$ for $j \in \NN$ and  $\ww_j^{\mp,\pm} := \la_j^{\pm 1/2} \vv_j $ for $j \in \ZZ \setminus \{0\}$,} \label{e:wj}\\
\text{and fix an arbitrary orthonormal basis $\{\vv_{0,j}\}_{j=1}^{b_1 (\pa \Om)}$  in $(\KK_1  (\pa \Om), \| \cdot\|_{\LLT})$}. \label{e:v0j}
\end{gather}


\begin{thm} \label{t:basis} (i) 
$\{\vv_j \}_{j\in \ZZ\setminus\{0\}} \cup \{\vv_{0,j}\}_{j=1}^{b_1(\pa \Om)} $ is an orthonormal basis in $\LLt$.

\item[(ii)] 
$\{\ww_j^{-,+} \}_{j\in \ZZ\setminus\{0\}} \cup \{\vv_{0,j}\}_{j=1}^{b_1(\pa \Om)} $ is a Riesz basis in $\HH^{-1/2} (\curl_{\pa \Om})$.

\item[(iii)] 
$\{\ww_j^{+,-} \}_{j\in \ZZ\setminus\{0\}} \cup \{\vv_{0,j}\}_{j=1}^{b_1(\pa \Om)} $ is a Riesz basis in $\HH^{-1/2} (\Div_{\pa \Om})$.

\item[(iv)] The operator $\Dev$ is selfadjoint in $\LLt$, and $\{\vv_j \}_{j\in \ZZ\setminus\{0\}} \cup \{\vv_{0,j}\}_{j=1}^{b_1(\pa \Om)} $ is a complete system of its  eigenvectors. Namely, $\Dev \vv_j = - \la_{|j|}^2 \vv_j$ , $j \in \ZZ \setminus \{0\}$, and $\ker \Dev = \KK_1 (\pa \Om) $.

\item[(v)] The operator
$\U_{\LLT\to\HH^{-1/2} (\curl_{\pa \Om})}  := 
\grad_{\pa \Om} \De_{\pa \Om}^{1/4} \Grad_{\pa \Om}^{-1} \dot + I_{\KK_1}\dot + \curlm_{\pa \Om} \De_{\pa \Om}^{-1/4} \Curl_{\pa \Om}^{-1}$ \linebreak
(operator $\U_{\LLt \to \HH^{-1/2} (\Div_{\pa \Om})}  := 
\grad_{\pa \Om} \De_{\pa \Om}^{-1/4} \Grad_{\pa \Om}^{-1} \dot +  I_{\KK_1} \dot + \curlm_{\pa \Om} \De_{\pa \Om}^{1/4} \Curl_{\pa \Om}^{-1}$)  
is a ho\-meomorphism from $\LLt$ to  $\HH^{-1/2} (\curl_{\pa \Om}) $ (to $\HH^{-1/2} (\Div_{\pa \Om})$, respectively). 
\end{thm}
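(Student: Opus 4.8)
\emph{Approach.} The plan is to reduce every assertion to the eigenbasis $\{u_k\}$ of $\De_{\pa \Om}$ from (\ref{e:basisDe}) together with the two algebraic identities $\Div_{\pa \Om}\grad_{\pa \Om} = \De^{\pa \Om}$ and $\curl_{\pa \Om}\curlm_{\pa \Om} = -\De^{\pa \Om}$, both of which follow at once from the variational definitions of $\Div_{\pa \Om}$, $\curl_{\pa \Om}$ and the unitarity of $\n_\times$. For part~(i) I would first note that $\{\la_j^{-1} u_j\}_{j \in \NN}$ is an orthonormal basis of $H^1_{\De_{\pa \Om}}$, since $|u_j|_1 = \la_j$. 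By (\ref{e:GradCurl}) the maps $\Grad_{\pa \Om}$ and $\Curl_{\pa \Om}$ are isometries of $H^1_{\De_{\pa \Om}}$ onto the gradient and curl summands of the orthogonal decomposition (\ref{e:HDL22}); hence their images $\{\vv_{-j}\}_{j \in \NN}$ and $\{\vv_j\}_{j \in \NN}$ are orthonormal bases of these summands. Adjoining the orthonormal basis $\{\vv_{0,j}\}$ of $\KK_1 (\pa \Om)$ and using the orthogonality of (\ref{e:HDL22}) gives~(i).

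\emph{Part (iv).} I would evaluate $\Dev$ on each basis vector. For a curl field one has $\Div_{\pa \Om}\vv_j = 0$ because $\Div_{\pa \Om}\curlm_{\pa \Om} = 0$, while $\curl_{\pa \Om}\vv_j = \la_j u_j$; therefore $\Dev \vv_j = -\Curl_{\pa \Om}(\la_j u_j) = -\la_j^2 \vv_j$. Symmetrically, for a gradient field $\curl_{\pa \Om}\vv_{-j} = 0$ and $\Div_{\pa \Om}\vv_{-j} = -\la_j u_j$, so $\Dev \vv_{-j} = -\la_j^2 \vv_{-j}$; and $\Dev \vv_{0,j} = 0$ directly from the definition of $\KK_1 (\pa \Om)$. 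Thus $\Dev$ is diagonal with real eigenvalues $-\la_{|j|}^2$ (and $0$ on $\KK_1 (\pa \Om)$) on the complete orthonormal system of~(i), so its canonical realisation is selfadjoint with exactly the stated spectral data, proving~(iv).

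\emph{Parts (v), then (ii)--(iii).} The heart of the matter is~(v); (ii)--(iii) will follow formally. I would read each operator $\U$ of~(v) as block diagonal with respect to the decompositions (\ref{e:HDL22}) and (\ref{e:HDcurl})--(\ref{e:HDdiv}). On the block $\KK_1 (\pa \Om)$ it is the identity. On the gradient and curl blocks, $\U_{\LLt\to\HH^{-1/2}(\curl_{\pa \Om})}$ factors as $\grad_{\pa \Om}\,\De_{\pa \Om}^{1/4}\,\Grad_{\pa \Om}^{-1}$ and $\curlm_{\pa \Om}\,\De_{\pa \Om}^{-1/4}\,\Curl_{\pa \Om}^{-1}$. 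Since $\De_{\pa \Om}^{\pm 1/4}$ are isometric isomorphisms of the scale spaces $H^s_{\De_{\pa \Om}}$ shifting the order by $\mp 1/2$, and since by (\ref{e:HDcurl}) the maps $\grad_{\pa \Om}$ and $\curlm_{\pa \Om}$ are homeomorphisms of $H^{1/2}_{\De_{\pa \Om}}$ and $H^{3/2}_{\De_{\pa \Om}}$ onto the respective closed summands (the orders $1/2$ and $3/2$ being interchanged in the case of $\HH^{-1/2}(\Div_{\pa \Om})$ and (\ref{e:HDdiv})), each block is a composition of homeomorphisms. As the Hodge sums are topological direct sums of finitely many closed subspaces, this makes $\U$ a homeomorphism, which is~(v). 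A one-line computation on the basis then gives $\U_{\LLt\to\HH^{-1/2}(\curl_{\pa \Om})} \vv_k = \ww_k^{-,+}$ and $\U_{\LLt\to\HH^{-1/2}(\Div_{\pa \Om})} \vv_k = \ww_k^{+,-}$ for $k \in \ZZ\setminus\{0\}$, both fixing $\vv_{0,j}$. Because a homeomorphism carries the orthonormal basis of~(i) to a Riesz basis, this yields (ii) and~(iii) at once.

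\emph{Main obstacle.} The delicate point is entirely in~(v): one must know that $\grad_{\pa \Om}$ and $\curlm_{\pa \Om}$ are \emph{homeomorphisms} onto the summands $\grad_{\pa \Om} H^{1/2}_{\De_{\pa \Om}}$, $\curlm_{\pa \Om} H^{3/2}_{\De_{\pa \Om}}$, etc., i.e. that the Hodge decompositions (\ref{e:HDcurl})--(\ref{e:HDdiv}) genuinely hold with the eigenvalue-defined scale spaces $H^s_{\De_{\pa \Om}}$ at the fractional orders $s = 1/2, 3/2$, and not merely with abstractly isomorphic spaces. This is where the identification $H^s_{\De_{\pa \Om}} \cong H^s_{\pa \Om}$ and the trace-space results of \cite{BCS02,BHPS03} are indispensable; once they are granted, the orthonormal bases $\{\la_k^{-s} u_k\}$ of $H^s_{\De_{\pa \Om}}$ render every norm computation above immediate.
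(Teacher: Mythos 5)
Your proposal is correct and follows essentially the same route as the paper's proof: both rest on the isometries $\Grad_{\pa \Om}$, $\Curl_{\pa \Om}$ from $H^1_{\De_{\pa \Om}}$ onto the Hodge summands, the eigenbasis $\{u_k\}$ of $\De_{\pa \Om}$, the resulting block-diagonal representation of $\Dev$ and of the operators $\U$, and the Hodge decompositions (\ref{e:HDcurl})--(\ref{e:HDdiv}) from \cite{BCS02,BHPS03} as the key external input for (v). The only (cosmetic) difference is the order — the paper proves (iv) first and extracts (i) as a by-product, whereas you do (i) first and then diagonalize $\Dev$ on the basis — and you should make sure your "canonical realisation" of $\Dev$ in (iv) is checked to have exactly the domain $\{ \vv \in \LLt : \Div_{\pa \Om} \vv \in H^1_{\De_{\pa \Om}},\ \curl_{\pa \Om} \vv \in H^1_{\De_{\pa \Om}}\}$ prescribed by the definition, as the paper does explicitly.
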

\begin{proof}
We prove first (iv) and obtain (i) as a by-product. Then (v) will follow from the Hodge decompositions (\ref{e:HDcurl})-(\ref{e:HDdiv}) and yield immediately (ii) and (iii).

Let us prove (iv). Since $\Grad_{\pa \Om}^\# = - \Div_{\pa \Om} \uph_{\LLT}$ and $\Curl_{\pa \Om}^\# = \curl_{\pa \Om} \uph_{\LLT}$, one sees that
\[
\text{$
\ker (\Div_{\pa \Om} \uph_{\LLT})  = \KK_1  (\pa \Om) \oplus \curlm_{\pa \Om} H^1_{\De_{\pa \Om}} $ \  and  \ $\ker (\curl_{\pa \Om} \uph_{\LLT}) = \grad_{\pa \Om} H^1_{\De_{\pa \Om}} \oplus \KK_1  (\pa \Om)$. }
\]
This implies $0= \curl_{\pa \Om}  \grad_{\pa \Om}= \Div_{\pa \Om} \curlm_{\pa \Om}$ and, due to (\ref{e:GradCurl}), implies 
\[ \text{
$
\dom \Dev = \{ \vv \in \LLt \ : \ \Div_{\pa \Om} \vv \in H^1_{\De_{\pa \Om}}, \ \curl_{\pa \Om} \vv \in H^1_{\De_{\pa \Om}} \}
$}
\]
(recall that $\dom (A+B) := \dom (A) \cap \dom (B)$). 

Hence 
$\Dev = 
\Grad_{\pa \Om}  \ \De_{\pa \Om} \!\uph_{H^3_{\De_{\pa \Om}}} 
 \Grad_{\pa \Om} ^{-1} \ \oplus \ \ 0_{\KK_1} \ \oplus \ \Curl_{\pa \Om} \ \De_{\pa \Om}\! \uph_{H^3_{\De_{\pa \Om}}} \Curl_{\pa \Om}^{-1} .
$
Now statement (iv) follows from the following  facts: (a) the operators $\Grad_{\pa \Om}^{ \pm 1}$ (operators $\Curl_{\pa \Om}^{ \pm 1}$) can be seen as unitary transformations between $H^1_{\De_{\pa \Om}}$ and $\grad_{\pa \Om} H^1_{\De_{\pa \Om}}$ (resp., $\curlm_{\pa \Om} H^1_{\De_{\pa \Om}}$), (b) 
the selfadjoint in $H^1_{\De_{\pa \Om}}$ operator $\De_{\pa \Om} \uph_{H^3_{\De_{\pa \Om}}}$
has the orthonormal in $H^1_{\De_{\pa \Om}}$ basis of eigenfunctions $\{\la_j^{-1}u_j\}_{j\in \NN}$. This completes the proof of (iv)
and of the theorem.
\end{proof}

\begin{cor}
 \label{c:DevDuality}
(i) There exists in $\HH^{-1/2} (\curl_{\pa \Om})$  (in $\HH^{-1/2} (\Div_{\pa \Om})$) a unique Hilbertian norm $\| \cdot \|_{\pi(\ga)} $ that is equivalent to the norm $|\cdot|_{\pi(\ga)}$  of (\ref{e:NoH-1/2}) and satisfy 
\begin{gather*} \label{e:||piDe0}
\textstyle \| \uu \|_{\pi}^2 =  \sum_{j \in \ZZ\setminus\{0\}} \la_{j}^{-1} | (\uu|\vv_j)_{\LLT}|^2  + \sum_{j=1}^{b_1 (\pa \Om)} | (\uu|\vv_{0,j})_{\LLT} |^2 , \quad \uu \in \LLT \cap \HH^{-1/2} (\curl_{\pa \Om}),
 \\ 
 \label{e:||gaDe0}
\textstyle \Bigl(\text{resp., } \| \uu \|_{\ga}^2 = 
  \sum_{j \in \ZZ\setminus\{0\}} \la_{j} | (\uu|\vv_j)_{\LLT} |^2 + \sum_{j=1}^{b_1 (\pa \Om)} | (\uu|\vv_{0,j})_{\LLT} |^2 ,   \uu \in \LLT \cap \HH^{-1/2} (\Div_{\pa \Om}) \Bigr).
\end{gather*}
\item[(ii)] The spaces $\Hs_{-,+} = \left(\HH^{-1/2} (\curl_{\pa \Om}) ,  \| \cdot \|_{\pi}\right)$ and $\Hs_{+,-}=\left(\HH^{-1/2} (\Div_{\pa \Om}), \| \cdot \|_{\ga} \right)$ are m-order spaces dual to each other w.r.t. $\Hs=\LLt$  (see Definition \ref{d:mutualduality}).

\item[(iii)] $\{\ww_j^{\mp,\pm} \}_{j\in \ZZ\setminus\{0\}} \cup \{\vv_{0,j}\}_{j=1}^{b_1(\pa \Om)} $ is an orthonormal basis in the space $\Hs_{\mp,\pm}$ of statement (ii).

\item[(iv)] The systems of vectors $\{\ww_j^{-,+} \}_{j\in \ZZ\setminus\{0\}} \cup \{\vv_{0,j}\}_{j=1}^{b_1(\pa \Om)} $ and $\{\ww_j^{+,-} \}_{j\in \ZZ\setminus\{0\}} \cup \{\vv_{0,j}\}_{j=1}^{b_1(\pa \Om)} $ are bi-orthogonal w.r.t. the $\LLT (\Om)$-pairing. 
\end{cor}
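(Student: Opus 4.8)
My plan is to read off all four assertions from the basis results of Theorem~\ref{t:basis}, together with the fact (Proposition~\ref{p:IntPart}, Remark~\ref{r:<>H}) that the $\LLT$-pairing restricts to the inner product $(\cdot|\star)_{\LLT}$ on $\LLT\times\LLT$. Throughout, for $\uu\in\LLT$ I abbreviate its Fourier coefficients with respect to the orthonormal basis of Theorem~\ref{t:basis}(i) by $a_j:=(\uu|\vv_j)_{\LLT}$ and $b_k:=(\uu|\vv_{0,k})_{\LLT}$, and I recall from (\ref{e:wj}) that $\ww_j^{-,+}=\la_j^{1/2}\vv_j$ and $\ww_j^{+,-}=\la_j^{-1/2}\vv_j$. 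I would prove (iv) first, as it feeds the computation in (i): since all the $\vv_j$ and $\vv_{0,k}$ lie in $\LLT$, the $\LLT$-pairing of any two of them is their $\LLT$-inner product, whence $\<\ww_j^{-,+}|\ww_l^{+,-}\>_{\LLT}=\la_j^{1/2}\la_l^{-1/2}(\vv_j|\vv_l)_{\LLT}=\de_{jl}$, the mixed pairings $\<\ww_j^{-,+}|\vv_{0,m}\>_{\LLT}$ and $\<\vv_{0,k}|\ww_l^{+,-}\>_{\LLT}$ vanish, and $\<\vv_{0,k}|\vv_{0,m}\>_{\LLT}=\de_{km}$; this is the claimed biorthogonality.

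For (i) and (iii) I would use the general principle that a Riesz basis $\{e_n\}$ of a Hilbert space carries a unique Hilbertian norm, $\|\sum c_n e_n\|^2:=\sum|c_n|^2$, rendering it orthonormal, and that this norm is equivalent to the ambient one exactly by the two-sided Riesz bounds. Applied to the Riesz basis of Theorem~\ref{t:basis}(ii) in $\HH^{-1/2}(\curl_{\pa\Om})$, whose norm is equivalent to $|\cdot|_\pi$ by (\ref{e:NoH-1/2}), this produces the norm $\|\cdot\|_\pi$, uniquely and equivalent to $|\cdot|_\pi$; Theorem~\ref{t:basis}(iii) gives $\|\cdot\|_\ga$ on $\HH^{-1/2}(\Div_{\pa\Om})$ in the same way, and statement (iii) is nothing but this construction. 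To obtain the explicit formulas, take $\uu\in\wt\Hs_{-,+}:=\LLT\cap\HH^{-1/2}(\curl_{\pa\Om})$, expand it in the Riesz basis as $\uu=\sum_j c_j\ww_j^{-,+}+\sum_k d_k\vv_{0,k}$, and recover the coefficients through the biorthogonal system of (iv): as $\<\cdot|\ww_j^{+,-}\>_{\LLT}$ is a bounded functional on $\HH^{-1/2}(\curl_{\pa\Om})$ that restricts to $(\uu|\ww_j^{+,-})_{\LLT}$ on $\LLT$, one gets $c_j=\<\uu|\ww_j^{+,-}\>_{\LLT}=\la_j^{-1/2}a_j$ and $d_k=b_k$. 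Then $\|\uu\|_\pi^2=\sum_j|c_j|^2+\sum_k|d_k|^2=\sum_j\la_j^{-1}|a_j|^2+\sum_k|b_k|^2$, which is the stated identity; the $\ga$-formula follows symmetrically, with $c_j=\<\uu|\ww_j^{-,+}\>_{\LLT}=\la_j^{1/2}a_j$.

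For (ii) I would verify the two axioms of Definition~\ref{d:m-n} and the norm-duality of Definition~\ref{d:mutualduality}. Density of $\wt\Hs_{-,+}$ in both $\LLT$ and $\HH^{-1/2}(\curl_{\pa\Om})$ is immediate, as it contains the common finite span of $\{\vv_j\}\cup\{\vv_{0,k}\}=\{\ww_j^{-,+}\}\cup\{\vv_{0,k}\}$, which is total in each space. For closedness of $\af_{-,+}[h]=\|h\|_\pi^2$ in $\LLT$ I would introduce the positive selfadjoint operator $\Sop$, diagonal in the $\LLT$-basis with $\Sop\vv_j=\la_j^{-1/2}\vv_j$ and $\Sop\vv_{0,k}=\vv_{0,k}$ (the operator of Proposition~\ref{p:MixOrder}); then $\af_{-,+}[h]=\|\Sop h\|_{\LLT}^2$ is the form of a selfadjoint operator, hence closed, and $\Sop^{-1}$ serves $\af_{+,-}$ symmetrically. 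Finally the mutual duality reduces to a weighted Cauchy--Schwarz computation in the $\LLT$-basis, giving, for $h\in\wt\Hs_{+,-}$ with coefficients $a_j,b_k$, the identity $\sup_{g\in\wt\Hs_{-,+}\setminus\{0\}}|(g|h)_{\LLT}|/\|g\|_\pi=\big(\sum_j\la_j|a_j|^2+\sum_k|b_k|^2\big)^{1/2}=\|h\|_\ga$, and symmetrically, which is exactly Definition~\ref{d:mutualduality}.

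The step I expect to be the main obstacle is the identification of the form domain $\dom\Sop$ with $\wt\Hs_{-,+}$ used in (ii): one must show that $h\in\LLT$ belongs to $\HH^{-1/2}(\curl_{\pa\Om})$ if and only if $\sum_j\la_j^{-1}|a_j|^2+\sum_k|b_k|^2<\infty$, i.e. match the $\LLT$-Fourier coefficients of $h$ with its $\HH^{-1/2}(\curl_{\pa\Om})$-Riesz coefficients. The care here is that the intersection $\LLT\cap\HH^{-1/2}(\curl_{\pa\Om})$ must be taken inside the common ambient space $\VV'_\ga$, into which both spaces embed continuously, so that the two norm-convergent expansions of the single formal series $\sum_j a_j\vv_j+\sum_k b_k\vv_{0,k}$ share the same limit; once this common-limit argument is in place, every remaining claim is a diagonal computation in the basis of Theorem~\ref{t:basis}.
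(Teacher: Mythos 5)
Your proposal is correct and follows essentially the same route as the paper's proof: both rest on the Riesz bases of Theorem~\ref{t:basis}, the unique Hilbertian norm rendering a Riesz basis orthonormal (equivalent to the original norm), the bi-orthogonality of the two weighted systems, and a direct diagonal verification of the m-order duality from the explicit norm formulas. You merely reorder the steps (proving (iv) first rather than last) and spell out the verification of Definition~\ref{d:m-n} and the coefficient-matching in the ambient space $\VV'_\ga$ in more detail than the paper does.
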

\begin{proof}
Since $\{\ww_j^{\mp,\pm} \}_{j\in \ZZ\setminus\{0\}} \cup \{\vv_{0,j}\}_{j=1}^{b_1(\pa \Om)} $
are Riesz bases by Theorem \ref{t:basis}, there is 
a unique Hilbertian norm $\| \cdot \|_{\pi} $ in $\HH^{-1/2} (\curl_{\pa \Om})$  (resp., $\| \cdot \|_{\ga} $ in $\HH^{-1/2} (\Div_{\pa \Om})$) that is equivalent to $|\cdot|_{\pi(\ga)}$ and makes the corresponding basis orthonormal. It follows from (\ref{e:vj})-(\ref{e:v0j}) and Theorem \ref{t:basis} that
$\{\ww_j^{-,+} \}_{j\in \ZZ\setminus\{0\}} \cup \{\vv_{0,j}\}_{j=1}^{b_1(\pa \Om)} $ and $\{\ww_j^{+,-} \}_{j\in \ZZ\setminus\{0\}} \cup \{\vv_{0,j}\}_{j=1}^{b_1(\pa \Om)} $ are bi-orthogonal w.r.t.  $(\cdot,\star)_{\LLT}$. 
This and (\ref{e:wj}) imply the statements (i) and (iii). It is easy to see from formulae of statement (i) that the conditions of m-order duality in Definition \ref{d:mutualduality}  are satisfied in statement (ii). This also justifies the replacement of the inner product  $(\cdot|\star)_{\LLT}$ with the pairing
$\<\cdot|\star\>_{\LLT}$ in the aforementioned bi-orthogonality statement, proving in this way statement (iv).
\end{proof}

\begin{rem}  \label{r:Interpolation}
Corollary \ref{c:DevDuality} (iii) implies that (\ref{e:HDcurl}) and (\ref{e:HDdiv}) are orthogonal w.r.t. $\|\cdot\|_{\pi}$ and, resp.,  $\|\cdot\|_{\ga}$.
Corollary \ref{c:DevDuality} (ii) and \cite[Appendix to IX.4, Example 3]{RSII} yield that $\LLt = [\HH^{-1/2} (\curl_{\pa \Om}), \HH^{-1/2} (\Div_{\pa \Om})]_{1/2} $  up to equivalence of the norms, where $[\cdot, \star ]_\th$ denotes the complex interpolation of Hilbert spaces (in the sense described in  \cite{RSII}).
\end{rem}

\begin{lem} \label{l:propertiesHadj}
Let $\Hs_{\mp,\pm}$ be a pair of m-order spaces dual to each other w.r.t. $\Hs$ (see Section \ref{s:MDAbstract}).
Let $\Uop_{\Hs_{\mp,\pm} \to \Hs}$, $\Uop_{\Hs \to \Hs_{\pm,\mp}}$, and $\Uop_{\Hs_{-,+} \to \Hs_{+,-}}$ be the unitary operators defined in Proposition \ref{p:MixOrder}. Then $\Uop_{\Hs_{\mp,\pm} \to \Hs}^\# = \Uop_{\Hs\to \Hs_{\pm,\mp}}$
and $\Uop_{\Hs_{-,+} \to \Hs_{+,-}} = \Uop_{\Hs_{-,+} \to \Hs_{+,-}}^\#$.
Moreover, $T^\# = \Uop_{\Hs_{-,+} \to \Hs_{+,-}} T^* \Uop_{\Hs_{-,+} \to \Hs_{+,-}}$  for densely defined  operators $T: \dom T \subseteq \Hs_{-,+}  \to \Hs_{+,-} $.
\end{lem}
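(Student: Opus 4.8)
The plan is to reduce all three assertions to the single formula of the last statement, $T^\# = \Uop_{\Hs_{-,+}\to\Hs_{+,-}} T^* \Uop_{\Hs_{-,+}\to\Hs_{+,-}}$, and to recover the two identities for the unitaries $\Uop$ as special cases. The whole argument rests on one bookkeeping fact that I would isolate first: the $\Hs$-pairing of Remark \ref{r:<>H} is expressed through the pivot inner product and the unitaries of Proposition \ref{p:MixOrder}. Concretely, for $a\in\Hs_{-,+}$ and $b\in\Hs_{+,-}$ I expect
\[
\<a|b\>_\Hs = (a\,|\,\Uop_{\Hs_{+,-}\to\Hs_{-,+}} b)_{\Hs_{-,+}},\qquad \<b|a\>_\Hs = (b\,|\,\Uop_{\Hs_{-,+}\to\Hs_{+,-}} a)_{\Hs_{+,-}}.
\]
These are precisely the unitaries attached to the perfect pairings $_{\Hs_{-,+}}\<\cdot|\star\>_{\Hs_{+,-}}$ and $_{\Hs_{+,-}}\<\cdot|\star\>_{\Hs_{-,+}}$ by the remark following the definition of perfect pairing. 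I would verify the identification on the dense sets $\wt\Hs_{\mp,\pm}$, where $\Uop_{\Hs_{+,-}\to\Hs_{-,+}}=\Sop^{-2}$ and $\Uop_{\Hs_{-,+}\to\Hs_{+,-}}=\Sop^{2}$, using $(\cdot|\star)_{\Hs_{-,+}}=(\Sop\cdot|\Sop\star)_\Hs$, $(\cdot|\star)_{\Hs_{+,-}}=(\Sop^{-1}\cdot|\Sop^{-1}\star)_\Hs$, and the selfadjointness of $\Sop$, and then extend by continuity of both sides.

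For the main formula I would unfold the definition (\ref{e:Am}) in the situation (A1). By definition, $g\in\dom T^\#$ with $g'=T^\# g$ exactly when $\<Tf|g\>_\Hs = \<f|g'\>_\Hs$ for all $f\in\dom T$. Substituting the two identities above and writing $U:=\Uop_{\Hs_{-,+}\to\Hs_{+,-}}$, with $\Uop_{\Hs_{+,-}\to\Hs_{-,+}}=U^{-1}$, this relation becomes $(Tf\,|\,Ug)_{\Hs_{+,-}}=(f\,|\,U^{-1}g')_{\Hs_{-,+}}$ for all $f\in\dom T$. This is word for word the defining property of the ordinary Hilbert-space adjoint $T^*:\Hs_{+,-}\to\Hs_{-,+}$ (which exists since $T$ is densely defined in $\Hs_{-,+}$): it holds iff $Ug\in\dom T^*$ and $T^*Ug = U^{-1}g'$, i.e. $g'=UT^*Ug$. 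Because $U$ is a homeomorphism the domain descriptions match as well, so $T^\# = U\,T^*\,U$, which is the third assertion.

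The other two statements then follow quickly. For $\Uop_{\Hs_{-,+}\to\Hs_{+,-}} = \Uop_{\Hs_{-,+}\to\Hs_{+,-}}^\#$ I would apply the formula just proved to $T=U$, an everywhere-defined unitary of type (A1): since $U^*=U^{-1}$ we get $U^\# = U U^* U = U U^{-1} U = U$. The operator $\Uop_{\Hs_{\mp,\pm}\to\Hs}$ maps into the pivot space $\Hs$, so it is of type (A3) rather than (A1) and is not covered by the formula directly; here I would rerun the same unfolding. For instance with $T=\Uop_{\Hs_{-,+}\to\Hs}$ the target pairing is the pivot inner product while the domain pairing is $_{\Hs_{-,+}}\<\cdot|\star\>_{\Hs_{+,-}}$, so the defining relation reads $(\Uop_{\Hs_{-,+}\to\Hs}f\,|\,g)_\Hs = (f\,|\,\Uop_{\Hs_{+,-}\to\Hs_{-,+}}g')_{\Hs_{-,+}}$; using that the Hilbert adjoint of the unitary $\Uop_{\Hs_{-,+}\to\Hs}$ equals $\Uop_{\Hs\to\Hs_{-,+}}$ together with the composition rule $\Uop_{\Hs_{-,+}\to\Hs_{+,-}}\Uop_{\Hs\to\Hs_{-,+}}=\Uop_{\Hs\to\Hs_{+,-}}$, I obtain $g'=\Uop_{\Hs\to\Hs_{+,-}}g$, that is $\Uop_{\Hs_{-,+}\to\Hs}^\# = \Uop_{\Hs\to\Hs_{+,-}}$; the opposite sign choice is handled identically.

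The main obstacle is purely one of careful bookkeeping rather than analysis. At every occurrence of $\<\cdot|\star\>_\Hs$ one must decide which of the three maps combined in Remark \ref{r:<>H} is in force (the pivot inner product, or the $\Hs_{-,+}$–$\Hs_{+,-}$ duality in one order or the other) and then insert the correct $\Uop$ to convert it into a genuine inner product; confusing these, or mishandling the correlated $\mp,\pm$ in the subscripts, is exactly where an error would arise. A secondary, milder point is the domain matching for unbounded $T$: I must verify the equality $T^\# = UT^*U$ as an equality of graphs, not merely on a common core, but this is automatic here since $U$ is a homeomorphism and the characterization of $T^\#$ is an ``if and only if''. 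Beyond Proposition \ref{p:MixOrder} I anticipate no genuine analytic difficulty.
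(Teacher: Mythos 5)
Your proposal is correct and is essentially an expanded version of the paper's own (very terse) argument: the paper likewise reduces everything to the identity $(h_{-,+}|h_{+,-})_\Hs=\<\Uop_{\Hs_{-,+}\to\Hs}h_{-,+}|\Uop_{\Hs_{+,-}\to\Hs}h_{+,-}\>_\Hs$ on the dense sets $\wt\Hs_{\mp,\pm}$ together with the definition (\ref{e:Am}), which is just the other face of your two bookkeeping formulas. One cosmetic caveat: the representation $\Uop_{\Hs_{-,+}\to\Hs_{+,-}}=\Sop^{2}$ (resp.\ $\Sop^{-2}$) holds literally only on $\dom\Sop^{2}$ (resp.\ $\ran\Sop^{2}$) rather than on all of $\wt\Hs_{\mp,\pm}$, but these sets are still dense, so your extension-by-continuity step goes through unchanged.
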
 
\begin{proof} 
Let $\Sop:\wt \Hs_{-,+} \subseteq \Hs \to \Hs$ be the injective selfadjoint operator defined in Proposition \ref{p:MixOrder}.  
Recall that  $\wt \Hs_{\mp,\pm}=\Hs \cap \Hs_{\mp,\pm}$ and 
$\Sop^{\pm1} = \Uop_{\Hs_{\mp,\pm} \to \Hs} \uph_{\wt \Hs_{\mp,\pm}} = \Uop_{\Hs \to \Hs_{\pm,\mp}} \uph_{\wt \Hs_{\mp,\pm}}$. 
So the obvious for $h_{\mp,\pm} \in \wt \Hs_{\mp,\pm}$ equality 
$( h_{-,+} |  h_{+,-} )_\Hs = 
\<\Uop_{\Hs_{-,+} \to \Hs} h_{-,+} | \Uop_{\Hs_{+,-} \to \Hs} h_{+,-} \>_\Hs  
$
and (\ref{e:Am}) imply the desired statements. 
\end{proof}

\begin{cor} \label{c:Upiga}
Let $\Hs_{\mp,\pm}$ and $\Hs$ be as in Corollary \ref{c:DevDuality} (ii).
Then 
\[
\text{$\Uop_{\Hs \to \Hs_{-,+}} = \U_{\LLT\to\HH^{-1/2} (\curl_{\pa \Om})} $ and  $\Uop_{\Hs \to \Hs_{+,-}} = \U_{\LLt \to \HH^{-1/2} (\Div_{\pa \Om})} $ \quad (cf. Theorem \ref{t:basis}). }
\]
\end{cor}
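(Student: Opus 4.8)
The plan is to prove both operator identities by testing on the orthonormal basis $\{\vv_j\}_{j\in\ZZ\setminus\{0\}}\cup\{\vv_{0,i}\}_{i=1}^{b_1(\pa\Om)}$ of $\LLt$ supplied by Theorem \ref{t:basis}(i). Since a bounded operator out of a Hilbert space is determined by its values on an orthonormal basis, it suffices to show that $\U_{\LLT\to\HH^{-1/2}(\curl_{\pa\Om})}$ and the canonical unitary $\Uop_{\Hs\to\Hs_{-,+}}$ of Proposition \ref{p:MixOrder}(iv) send each $\vv_j$ to the same vector and fix each $\vv_{0,i}$, and likewise for the pair $\U_{\LLt\to\HH^{-1/2}(\Div_{\pa\Om})}$, $\Uop_{\Hs\to\Hs_{+,-}}$. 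By Corollary \ref{c:DevDuality}(iii) the common targets should be the orthonormal basis elements $\ww_j^{-,+}$ (resp. $\ww_j^{+,-}$) of $\Hs_{\mp,\pm}$, so the genuine content is the basiswise identity $\U\vv_j=\ww_j^{-,+}$; as a by-product this also reproves that the homeomorphisms of Theorem \ref{t:basis}(v) are unitary for the norms $\|\cdot\|_{\pi(\ga)}$.

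First I would evaluate $\Uop_{\Hs\to\Hs_{-,+}}$ from its abstract description. By Proposition \ref{p:MixOrder}(i),(iv) it coincides with $\Sop^{-1}$ on $\wt\Hs_{+,-}=\LLt\cap\HH^{-1/2}(\Div_{\pa\Om})$, where $\Sop$ is the positive selfadjoint operator in $\LLt$ fixed by $\|\Sop h\|_{\LLt}=\|h\|_{\pi}$. The explicit quadratic form for $\|\cdot\|_\pi$ in Corollary \ref{c:DevDuality}(i) is diagonal in $\{\vv_j\}$ and trivial on $\KK_1(\pa\Om)$, so $\Sop$ is diagonalized by $\{\vv_j\}$ with eigenvalues read off from those weights and fixes each $\vv_{0,i}$; combined with the definition of $\ww_j^{-,+}$ in (\ref{e:wj}) this gives $\Uop_{\Hs\to\Hs_{-,+}}\vv_j=\ww_j^{-,+}$, and symmetrically $\Uop_{\Hs\to\Hs_{+,-}}\vv_j=\ww_j^{+,-}$. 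Next I would run the same computation on the concrete operators of Theorem \ref{t:basis}(v). Because $\vv_{-k}=\la_k^{-1}\grad_{\pa\Om}u_k$ lies in the $\grad_{\pa\Om}$-summand, $\vv_k=\la_k^{-1}\curlm_{\pa\Om}u_k$ in the $\curlm_{\pa\Om}$-summand, and $\vv_{0,i}\in\KK_1(\pa\Om)$, each of the three block terms in the formula for $\U$ acts on exactly one sector. Applying $\Grad_{\pa\Om}^{-1}\grad_{\pa\Om}u_k=u_k$ and $\Curl_{\pa\Om}^{-1}\curlm_{\pa\Om}u_k=u_k$, then the spectral action of $\De_{\pa\Om}^{\pm1/4}$ on the eigenfunctions $u_k$ from (\ref{e:basisDe}), and finally $\grad_{\pa\Om}$ or $\curlm_{\pa\Om}$, I would carry the powers of $\la_k$ through and compare with (\ref{e:wj}), obtaining $\U\vv_j=\ww_j^{-,+}$ with the $\Div$ case following after interchanging the roles of $\De_{\pa\Om}^{1/4}$ and $\De_{\pa\Om}^{-1/4}$ on the two sectors. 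Equality on a basis then yields Corollary \ref{c:Upiga}.

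The routine part is the spectral calculus on the $u_k$; the step I expect to require the most care is the exponent bookkeeping. One must track the powers of $\la_k$ simultaneously through $\Grad_{\pa\Om}^{-1}$ (and $\Curl_{\pa\Om}^{-1}$), through $\De_{\pa\Om}^{\pm1/4}$, and through $\grad_{\pa\Om}$ (and $\curlm_{\pa\Om}$), while keeping straight the reciprocal convention $\la_{-j}=\la_j^{-1}$ of (\ref{e:wj}) and the assignment of the $\grad_{\pa\Om}H^s_{\De_{\pa\Om}}$ and $\curlm_{\pa\Om}H^s_{\De_{\pa\Om}}$ components to their correct regularity indices in (\ref{e:HDcurl})--(\ref{e:HDdiv}). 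A single sign slip in one of these exponents would route a summand into the wrong trace space, so I would verify the gradient sector ($j<0$) and the $\curlm$ sector ($j>0$) separately and confirm in each case that the image is precisely the $\pi$- or $\ga$-normalized basis vector predicted by Corollary \ref{c:DevDuality}(iii).
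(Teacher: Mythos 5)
Your proposal is correct and follows essentially the same route as the paper's proof: the paper likewise reads off from the explicit formula for $\|\cdot\|_{\pi(\ga)}$ in Corollary \ref{c:DevDuality}(i) that $\Sop$ is diagonal with respect to the basis $\{\vv_j\}_{j\in\ZZ\setminus\{0\}}\cup\{\vv_{0,j}\}$ (with $\Sop\vv_j=\la_j^{-1/2}\vv_j$, $\Sop\vv_{0,j}=\vv_{0,j}$) and then identifies the unique unitary extension with the block operator of Theorem \ref{t:basis}(v). The only difference is that you spell out the sector-by-sector verification that the concrete operator sends $\vv_j$ to $\ww_j^{\mp,\pm}$, which the paper leaves implicit in the phrase ``takes the form of''; your emphasis on the $\la_k$-exponent bookkeeping across $\Grad_{\pa\Om}^{-1}$, $\De_{\pa\Om}^{\pm1/4}$ and $\grad_{\pa\Om}/\curlm_{\pa\Om}$ is exactly where the care is needed.
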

\begin{proof}
The choice of the norm $\|\cdot \|_{\Hs_{-,+}} = \| \cdot \|_{\pi} $, 
implies that $\Sop:\wt \Hs_{-,+} \subseteq \Hs \to \Hs$ 
is diagonal w.r.t. the basis $\{\vv_j \}_{j\in \ZZ\setminus\{0\}} \cup \{\vv_{0,j}\}_{j=1}^{b_1(\pa \Om)} $,
 $\Sop \vv_j = \la_j^{-1/2} \vv_j$ for $j \neq 0$ and $S_{-,+} \vv_{0,j} = \vv_{0,j}$ for 
 $j =1$, \dots, $b_1(\pa \Om)$. So the unique extension $\Uop_{\Hs \to \Hs_{+,-}}$ of $\Sop $ to a unitary operator
 from $\Hs$ to $\Hs_{+,-}$ takes the form of $\U_{\LLT\to\HH^{-1/2} (\curl_{\pa \Om})}$.
The proof for $\Uop_{\Hs \to \Hs_{-,+}}$ is analogous.
\end{proof}
\begin{rem} \label{r:Upi}
Since $\Uop_{\Hs_{-,+} \to \Hs}^{-1}=\Uop_{\Hs \to \Hs_{-,+}}$ by definition, and $\Uop_{\Hs_{-,+} \to \Hs}^\# = \Uop_{\Hs \to \Hs_{+,-}}$ by Lemma \ref{l:propertiesHadj},  one sees that  Corollary \ref{c:Upiga} 
proves Theorem \ref{t:Upi} with $\Upi= \U_{\LLT\to\HH^{-1/2} (\curl_{\pa \Om})}^{-1}$.
\end{rem}

\section{Reduction tuples and associated Calkin triples} \label{s:RT}

Let $A$ be a  symmetric closed densely defined operator in a  Hilbert space $\Hc$, and let 
$\Hc^2 = \Hc \oplus \Hc$. We consider the graph $\Gr A^* = \{ \{f,A^*f \} \in \Hc^2 : f \in \dom A^*\}$ of the adjoint operator $A^*$ and the domain $\dom A^*$ of $A^*$ as Hilbert spaces equipped with the graph norm and  use systematically the identification of the normed spaces $(\Gr A, \| \cdot \|_{\Gr A})$ and $(\dom A, \| \cdot \|_{\dom A})$.  


\begin{defn} \label{d:rt} 
Let $\Hs_{\mp,\pm}$ be m-order spaces dual  to each other w.r.t. $\Hs$ and 
let $\wt \Hs_{\mp,\pm} := \Hs_{\mp,\pm} \cap \Hs$
(see Section \ref{s:MDAbstract}).
We  say that $(\Hs_{-,+},\Hs,\Gb', W)$ is a \emph{reduction tuple for} $A^*$ if 
the following conditions hold:
\item[(R1)] $ \Gb':\Gr A^* \to \Hs_{-,+}$ is a surjective  operator,
\item[(R2)] $W$ is a linear homeomorphism from $\Hs_{-,+}$ onto $\Hs_{+,-}$,
\item[(R3)] $(A^*f|g)_{\Hc}-(f,A^*g)_{\Hc} =  \< \Gb' \{f,A^*f\} | W \Gb' \{g,A^*g\} \>_\Hs$ for all $f,g \in \dom A^*$.
\end{defn}

\begin{rem}
 Similar to Remark \ref{r:Ga}, we will use  the shortened notations $(\Hs_{-,+},\Hs,\Gb, W)$ with $\Gb $ instead of $\Gb'$,
where the operator $\Gb f:=\Gb' \{f,A^*f\}$ acts from from $\dom A^*$ to $\Hs_{-,+}$. So 
\begin{gather} 
\label{e:R3'}
\text{(R3) takes the shorter form }
(A^*f|g)_{\Hc}-(f,A^*g)_{\Hc} =  \< \Gb f | W \Gb g \>_\Hs , \quad f,g \in \dom A^* .
\end{gather}
\end{rem}

\begin{defn} \label{d:urt} 
A reduction tuple $(\Hs_{-,+},\Hs, \Gb, W)$ will be called unitary if it satisfies the following conditions:
\item[(U1)] $W:\Hs_{-,+} \to \Hs_{+,-}$ is a unitary operator, 
\item[(U2)] $ W$ maps $\wt \Hs_{-,+} $ onto $\wt \Hs_{+,-} $ and $\| W h\|_\Hs = \| h\|_\Hs$ for all $h \in \wt \Hs_{-,+}$.
\end{defn}

\begin{rem}\label{r:iCT} For every operator  adjoint to a  certain symmetric operator, there exists a unitary reduction tuple. A trivial example can be obtained from the example of the surjective Calkin reduction operator $\Gb'$ in \cite[Theorem 2.8]{C39}, which obviously produces a unitary reduction tuple $(\Hs,\Hs,\Gb',W)$. 
\end{rem}

\begin{defn} \label{d:c} 
We say that a reduction tuple $(\Hs_{-,+},\Hs, \Gb, W)$ and a Calkin triple $(\Hs,\wt \Gb,\wt W)$ for $A^*$ are associated with each other if
$\Gb f = \wt \Gb f$ for all $f \in \dom \wt \Gb $, $W h = \wt W h$ for all $h \in \wt \Hs_{-,+} $, and  $ W^{-1} h = \wt W^{-1} h$ for all $h \in \wt \Hs_{+,-} $.
\end{defn}

\begin{prop} \label{p:RT}
Let $(\Hs_{-,+},\Hs,\Gb,W)$ be a reduction tuple for $A^*$. Then:
\item[(i)] The operator $\Gb':\Gr A^* \to \Hs_{-,+}$ is closed and  bounded.
\item[(ii)] The tuple $( \Hs_{+,-},\Hs, c W  \Gb,\, - W^{-1})$ is a reduction tuple for $A^*$ for every $c \in 
\{ z \in \CC : |z|=1\}$.
The reduction tuple $( \Hs_{+,-},\Hs, \ii W  \Gb,\, - W^{-1})$ will be called dual to $(\Hs_{-,+},\Hs,\Gb,W)$.

\item[(iii)] $\Gr A = \ker \Gb'$, $\dom A = \ker \Gb$, and $W^\# = - W$.
\end{prop}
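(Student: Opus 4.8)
The plan is to extract everything from the integration-by-parts identity (R3), using repeatedly the surjectivity of $\Gb$ from (R1), the homeomorphism property of $W$ from (R2), and the non-degeneracy of the perfect pairing $\<\cdot|\star\>_\Hs$ supplied by Proposition \ref{p:MixOrder}. For (i), I would observe that $\Gb'$ is defined on the whole Hilbert space $\Gr A^*$ and takes values in the Hilbert space $\Hs_{-,+}$, so by the closed graph theorem it is enough to show that $\Gb'$ is closed. Thus I would take $\{f_n,A^*f_n\}\to\{f,A^*f\}$ in $\Gr A^*$ with $\Gb'\{f_n,A^*f_n\}\to w$ in $\Hs_{-,+}$ and, for each fixed $g\in\dom A^*$, pass to the limit in the identity $\<\Gb f_n|W\Gb g\>_\Hs=(A^*f_n|g)_\Hc-(f_n|A^*g)_\Hc$ provided by (R3); since $W\Gb g\in\Hs_{+,-}$ is fixed and the pairing is continuous, this yields $\<w|W\Gb g\>_\Hs=(A^*f|g)_\Hc-(f|A^*g)_\Hc=\<\Gb f|W\Gb g\>_\Hs$. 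As $g$ ranges over $\dom A^*$ the vectors $W\Gb g$ exhaust $\Hs_{+,-}$ (by (R1) and (R2)), so non-degeneracy of the pairing forces $w=\Gb f$; hence $\Gb'$ is closed, and therefore bounded.

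The second step, which underlies both (ii) and (iii), is an antisymmetry identity. The form $B(f,g):=(A^*f|g)_\Hc-(f|A^*g)_\Hc$ obeys $\overline{B(g,f)}=-B(f,g)$, while the $\Hs$-pairing is Hermitian, $\overline{\<u|v\>_\Hs}=\<v|u\>_\Hs$, by the adjoint-pairing statement of Proposition \ref{p:MixOrder}(iii). Feeding (R3) into $\overline{B(g,f)}=-B(f,g)$ gives $\<W\Gb f|\Gb g\>_\Hs=-\<\Gb f|W\Gb g\>_\Hs$ for all $f,g\in\dom A^*$. Since $W$ is a bounded everywhere-defined homeomorphism, its $\Hs$-pairing-adjoint $W^\#$ in the sense of case (A1) is everywhere defined and bounded (Lemma \ref{l:propertiesHadj}) and satisfies $\<W\Gb f|\Gb g\>_\Hs=\<\Gb f|W^\#\Gb g\>_\Hs$; combining, $\<\Gb f|(W+W^\#)\Gb g\>_\Hs=0$ for all $f,g$, and surjectivity of $\Gb$ together with non-degeneracy yield $W^\#=-W$, which is the last assertion of (iii).

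For (ii) I would simply verify the three axioms for the candidate $(\Hs_{+,-},\Hs,cW\Gb,-W^{-1})$, after noting that $\Hs_{+,-}$ and $\Hs_{-,+}$ are m-order dual to each other by Proposition \ref{p:MixOrder}(iii). Condition (R1) holds because $cW\Gb'$ is the composition of the surjection $\Gb'$ with the homeomorphism $cW$ (here $c\ne0$); (R2) holds because $-W^{-1}$ is a homeomorphism of $\Hs_{+,-}$ onto $\Hs_{-,+}$. For (R3) I would compute $(-W^{-1})(cW\Gb g)=-c\,\Gb g$, so by conjugate-linearity in the second argument $\<cW\Gb f|-c\,\Gb g\>_\Hs=-|c|^2\<W\Gb f|\Gb g\>_\Hs=-\<W\Gb f|\Gb g\>_\Hs$ because $|c|=1$; by the antisymmetry identity this equals $\<\Gb f|W\Gb g\>_\Hs=B(f,g)$, which is exactly (R3) for the new tuple. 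The dual tuple is the case $c=\ii$.

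Finally, for the kernel identities in (iii), by (R3) the condition $\Gb f=0$ is equivalent to $B(f,g)=0$ for all $g\in\dom A^*$ (again because the $W\Gb g$ exhaust $\Hs_{+,-}$ and the pairing is non-degenerate), i.e. to $(A^*f|g)_\Hc=(f|A^*g)_\Hc$ for all $g\in\dom A^*$. This says precisely that $f\in\dom(A^*)^*$ with $(A^*)^*f=A^*f$; as $A$ is closed and symmetric, $(A^*)^*=A$, so it is equivalent to $f\in\dom A$ and $A^*f=Af$. Hence $\ker\Gb=\dom A$ and, under the identification $f\leftrightarrow\{f,A^*f\}$, $\ker\Gb'=\Gr A$. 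None of the steps is deep; the one place needing genuine care is the conjugate-linear bookkeeping of the $\Hs$-pairing in the antisymmetry identity (and confirming via Lemma \ref{l:propertiesHadj} that $W^\#$ is everywhere defined), while the conceptual cornerstone is the closed-graph argument in (i), where non-degeneracy of the perfect pairing is exactly what converts the limit relation into the pointwise identity $w=\Gb f$.
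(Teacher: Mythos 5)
Your proof is correct and follows essentially the same route as the paper: the closed-graph argument for (i) is precisely the ``standard argument based on the surjectivity of $\Gb'$ and $W$'' that the paper invokes by reference, and your derivation of the antisymmetry identity from the skew-symmetry of $(A^*f|g)_{\Hc}-(f|A^*g)_{\Hc}$ is exactly the computation the paper displays to obtain $W^\#=-W$ and (R3) for the dual tuple. You have merely filled in the details (the exhaustion of $\Hs_{+,-}$ by the vectors $W\Gb g$, the non-degeneracy of the perfect pairing, the identity $(A^*)^*=A$ for the kernel statement) that the paper leaves implicit.
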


\begin{proof}
(i) can be obtained from (\ref{e:R3'}) by standard arguments based on the surjectivity of $\Gb'$ and $W$ (cf. the case of boundary triple in \cite[Lemma 7.2]{DM17}). 
(ii)-(iii) The skew-symmetry of the form $[f|g]_{A^*} = (A^*f|g)_{\Hc}-(f,A^*g)_{\Hc}$ and  (\ref{e:R3'}) imply   
\[ \< c W  \Gb f | (- W^{-1}) c W  \Gb g \>_\Hs  = - \<  W  \Gb f |  \Gb g \>_\Hs 
 =  \< \Gb f | W  \Gb g \>_\Hs \] for $ f,g \in \dom A^*$.
This gives $W^\# = - W$ and (\ref{e:R3'}) for the tuple $\mathfrak{t} = ( \Hs_{+,-},\Hs, c W  \Gb,\, - W^{-1})$.  Now (R1) and (R2) for $\mathfrak{t}$ and $\ker \Gb = \dom A$ are obvious.
\end{proof}

\begin{cor} \label{c:RTtoCT}
Let $(\Hs_{-,+},\Hs,\Gb,W)$ be a unitary reduction tuple for $A^*$. Let $\wt \Gb  := \Gb \uph_{\dom \wt \Gb}$ with 
$\dom \wt \Gb :=\{f \in \dom A^* \ : \ \Gb f \in \Hs \}$.
Then there exists a unique unitary operator $\wt W:\Hs \to \Hs$
such that $\wt W h =  W h$ for all $h \in \wt \Hs_{-,+} $ and 
$(\Hs,\wt \Gb,\wt W)$ is a Calkin triple for $A^*$. Besides, 
$(\Hs_{-,+},\Hs,\Gb,W)$ and $(\Hs,\wt \Gb,\wt W)$ are  associated  in the sense of Definition \ref{d:c}.
\end{cor}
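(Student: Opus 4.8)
The plan is to construct $\wt W$ by extension, establish the operator-theoretic properties of $\wt\Gb$, and then verify the Calkin condition (C2), with the reconstruction (backward) direction of (C2) being the crux.

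First I would construct $\wt W$. By (U2) the restriction $W\uph_{\wt\Hs_{-,+}}$ maps $\wt\Hs_{-,+}$ onto $\wt\Hs_{+,-}$ and preserves the $\Hs$-norm, so it is an isometry of $\Hs$ between two subspaces that are each dense in $\Hs$ (by (\ref{e:tHs0})). An isometry with dense domain and dense range extends uniquely to a unitary operator $\wt W:\Hs\to\Hs$, and density of $\wt\Hs_{-,+}$ in $\Hs$ forces uniqueness of any operator agreeing with $W$ on $\wt\Hs_{-,+}$. The three association identities of Definition \ref{d:c} are then immediate: $\wt\Gb=\Gb\uph_{\dom\wt\Gb}$ gives the first, $\wt W h=Wh$ on $\wt\Hs_{-,+}$ is the construction, and since $\wt W(\wt\Hs_{-,+})=W(\wt\Hs_{-,+})=\wt\Hs_{+,-}$ the equality $\wt W^{-1}=W^{-1}$ on $\wt\Hs_{+,-}$ follows.

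Next I would check that $\wt\Gb$ is a closed, densely defined operator in the Hilbert space $\dom A^*\cong\Gr A^*$. Note $\dom\wt\Gb=\Gb^{-1}(\wt\Hs_{-,+})$, because $\Gb$ maps into $\Hs_{-,+}$, so $\Gb f\in\Hs$ is the same as $\Gb f\in\wt\Hs_{-,+}$; consequently $\ran\wt\Gb=\wt\Hs_{-,+}$ by surjectivity of $\Gb$. For density: $\Gb'$ is bounded and surjective by Proposition \ref{p:RT}(i) and (R1), hence open, so any $x\in\Gr A^*$ can be approximated by lifting a sequence in $\wt\Hs_{-,+}$ converging to $\Gb' x$ in $\Hs_{-,+}$ (using density of $\wt\Hs_{-,+}$ in $\Hs_{-,+}$ and correcting by elements of $\ker\Gb'=\Gr A$). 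For closedness I would use the injective selfadjoint operator $\Sop$ and the unitary $\Uop_{\Hs_{-,+}\to\Hs}$ of Proposition \ref{p:MixOrder}: if $f_n\to f$ in $\dom A^*$ and $\wt\Gb f_n\to h$ in $\Hs$, then $\Gb f_n\to\Gb f$ in $\Hs_{-,+}$, whence $\Sop\Gb f_n=\Uop_{\Hs_{-,+}\to\Hs}\Gb f_n$ converges in $\Hs$; since also $\Gb f_n\to h$ in $\Hs$ and $\Sop$ is closed, $h\in\dom\Sop=\wt\Hs_{-,+}$ with $\Sop h=\Uop_{\Hs_{-,+}\to\Hs}\Gb f$, and injectivity of $\Uop_{\Hs_{-,+}\to\Hs}$ gives $h=\Gb f$, i.e. $f\in\dom\wt\Gb$ and $\wt\Gb f=h$.

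Finally I would verify (C2). The implication from $g\in\dom\wt\Gb,\ g'=A^*g,\ h=\wt W\wt\Gb g$ to the identity in (C2) is a direct computation: for $f\in\dom\wt\Gb$, (R3) gives $(A^*f|g)_\Hc-(f|A^*g)_\Hc=\<\Gb f|W\Gb g\>_\Hs$, and since $\Gb f,W\Gb g\in\Hs$ this pairing is the inner product, while $W\Gb g=\wt W\wt\Gb g=h$. The converse is the main obstacle. Starting from $(A^*f|g)_\Hc-(f|g')_\Hc=(\wt\Gb f|h)_\Hs$ for all $f\in\dom\wt\Gb$, I would first restrict to $f\in\dom A=\ker\Gb\subseteq\dom\wt\Gb$ (so $\wt\Gb f=0$) to deduce $g\in\dom A^*$ and $g'=A^*g$. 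Subtracting (R3) with this $g$ then yields $(\phi|h)_\Hs=\<\phi|W\Gb g\>_\Hs$ for $\phi=\wt\Gb f$ ranging over all of $\ran\wt\Gb=\wt\Hs_{-,+}$. Because $W\Gb g\in\Hs_{+,-}$, the right-hand side is bounded by $\|\phi\|_{\Hs_{-,+}}\|W\Gb g\|_{\Hs_{+,-}}$, so the defining formula (\ref{e:defH+-}) of the dual norm yields $h\in\wt\Hs_{+,-}$; then nondegeneracy of the perfect pairing of Proposition \ref{p:MixOrder}(ii), together with density of $\wt\Hs_{-,+}$ in $\Hs_{-,+}$, forces $h=W\Gb g$. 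Consequently $\Gb g=W^{-1}h\in W^{-1}(\wt\Hs_{+,-})=\wt\Hs_{-,+}\subseteq\Hs$, so $g\in\dom\wt\Gb$, and $h=W\Gb g=\wt W\wt\Gb g$, completing (C2). The delicate point throughout is careful bookkeeping of which instance of $\<\cdot|\star\>_\Hs$ is in force (inner product versus perfect pairing) and the passage, via the dual-norm definition, from a pairing identity on the dense set $\wt\Hs_{-,+}$ to genuine membership of $h$ in the pivot-intersection space $\wt\Hs_{+,-}$.
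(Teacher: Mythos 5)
Your proposal is correct and follows essentially the same route as the paper's proof: construct $\wt W$ as the unique unitary extension of the densely defined isometry $W\uph_{\wt\Hs_{-,+}}$, prove (C1) by lifting a dense subset of $\wt\Hs_{-,+}$ through $\Gb$ modulo $\ker\Gb=\dom A$, get the forward implication of (C2) directly from (R3), and get the converse by first testing against $f\in\dom A$, then subtracting (R3) and using the dual-norm definition (\ref{e:defH+-}) together with (U2) to conclude $h=W\Gb g\in\wt\Hs_{+,-}$ and $g\in\dom\wt\Gb$. Your explicit verification that $\wt\Gb$ is closed (via $\Sop$ and $\Uop_{\Hs_{-,+}\to\Hs}$) is a small addition the paper leaves implicit, but it does not change the argument.
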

\begin{proof}
Condition (U2) of Definition \ref{d:c} implies that one can consider $W_0 := W \uph_{\wt \Hs_{-,+}}$ as a bounded densely defined  operator from $\Hs$ to $\Hs$. Moreover, 
its continuous extension $\wt W:\Hs \to \Hs$ to 
$\Hs$ is isometric and has the image $\wt W \Hs \supseteq \wt \Hs_{+,-} $. Since $\wt \Hs_{+,-}$ is dense in $\Hs$, the operator $\wt W$ is unitary. Clearly, $\wt W$ is a unique unitary operator in $\Hs$
such that $\wt W h =  W h$ for $h \in \wt \Hs_{-,+} $.
We prove in several steps that conditions (C1)-(C2) of Definition \ref{d:CT} hold for $(\Hs,\wt \Gb,\wt W)$.

\emph{Step 1.} We shall prove (C1) which says that $\dom \wt \Gb$ is dense in $(\dom A^* , \| \cdot \|_{\dom A^*})$. By definition, 
$\dom \wt \Gb = \{f \in \dom A^* \ : \ \Gb f \in \wt \Hs_{-,+} \}$, and so $\ran \wt G = \wt \Hs_{-,+} $.
By property (R1) of  $(\Hs_{-,+},\Hs,\Gb,W)$ and by Proposition \ref{p:RT} (iii), we see that $\dom A = \ker \Gb = \ker \wt \Gb$ and that $ \Gb$ maps  $\dom A^* \ominus \dom A$ bijectively and continuously onto $\Hs_{-,+}$. Since $\wt \Hs_{-,+}$ is dense both in  $\Hs_{-,+}$ and in $\Hs$, the set 
$
\{f \in \dom A^* \ominus \dom A \ : \   \Gb f \in \wt \Hs_{-,+}\}
$
is dense in $\dom A^* \ominus \dom A$ and 
is  contained in $\dom \wt \Gb$. On the other hand, $\dom \wt \Gb$
contains $\dom A = \ker \wt \Gb$. This implies (C1) for the triple $(\Hs,\wt \Gb,\wt W)$.

\emph{Step 2.} The implication $\Leftarrow$ of (C2) takes the form $(A^*f |g)_\Hc - (f |A^* g)_\Hc =  ( \wt \Gb f | \wt W \wt \Gb g)_{\Hs}$, $f,g \in \dom \wt \Ga$, and follows immediately from (R3).

\emph{Step 3.} Let us prove the implication $\Rightarrow$ of (C2), i.e., we assume $(A^* f |g)_{\Hc} - (f,g')_{\Hc} = (\wt \Gb f| h )_{\Hs} $ for all 
 $f \in \dom \wt \Gb$,  and have to prove that
$g \in \dom \wt \Gb$, $g' = A^* g$, and $h =\wt W \wt \Gb g$.
For arbitrary $f \in \dom A$, we have $\wt \Gb f =0$, and in turn,
 $(A f |g)_{\Hc} = (f,g')_{\Hc} $, which implies $\{g,g'\} \in \Gr  A^*$. Now it follows from (R3) that 
$
(A^* f |g)_{\Hc} - (f,g')_{\Hc} = \<  \Gb f| W  \Gb g \>_{\Hs} $ for all $f \in \dom A^*$. So $\< h' |  W  \Gb g \>_{\Hs} =  ( h' | h )_{\Hs}$ for all $h' \in \ran \wt \Gb$. It follows from Step 1 that $\ran \wt \Gb = \wt \Hs_{-,+}$, and so $\| h \|_{\Hs_{+,-}}<\infty$ (see (\ref{e:defH+-}) for the definition of $\| \cdot \|_{\Hs_{+,-}}$). Thus, $h \in \wt \Hs_{+,-}$ and $h =  W  \Gb g$. By (U2), 
 $ \Gb g = W^{-1} h \in \wt \Hs_{-,+}$, and so, $g \in \dom \wt \Gb$ and 
$\wt W \wt \Gb g =  W  \Gb g = h$. This completes the proof of (C2).

The fact that the tuples $(\Hs_{-,+},\Hs,\wh \Gb,\wh W)$ and $(\Hs, \Gb, W)$ are associated is now obvious.
\end{proof}


In the rest of the section, we fix in  $\HH^{-1/2} (\curl_{\pa \Om})$ (in $\HH^{-1/2} (\Div_{\pa \Om})$) the norm $\|\cdot\|_{\pi (\ga)}$ defined by Corollary \ref{c:DevDuality}. By Corollary \ref{c:DevDuality} (ii),
\begin{gather*} \label{e:Hpmpiga}
\text{$\Hs_{-,+} = \left(\HH^{-1/2} (\curl_{\pa \Om}) ,  \| \cdot \|_{\pi}\right)$ and $\Hs_{+,-}=\left(\HH^{-1/2} (\Div_{\pa \Om}), \| \cdot \|_{\ga} \right)$ } 
\end{gather*}
are m-order spaces dual to each other w.r.t. $\Hs=\LLt$, see Definition \ref{d:mutualduality}.

Let us define the linear homeomorphism
$\n_\times^{\pi \to \ga} :=- \ga_\top \pi_\top^{-1}$ from $\HH^{-1/2} (\curl_{\pa \Om})$ onto $\HH^{-1/2} (\Div_{\pa \Om})$ and put $\n_\times^{\ga \to \pi} := -  (\n_\times^{\pi \to \ga})^{-1}$.
Note that
\begin{gather} \label{e:npiga}
\n_\times^{\pi(\ga) \to \ga(\pi)} \uu = \n_\times \uu  \quad \text{  if $\uu \in \LLt \cap \HH^{-1/2} (\curl_{\pa \Om})$ (if 
$\uu \in \LLt \cap \HH^{-1/2} (\Div_{\pa \Om})$)}.
\end{gather}

\begin{lem} \label{l:npiga}
With the choice of norms as in Corollary \ref{c:DevDuality} (ii), $\n_\times^{\pi \to \ga}$ and $\n_\times^{\ga \to \pi}$ are   unitary operators. 
\end{lem}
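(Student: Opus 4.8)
The plan is to verify unitarity by showing that $\n_\times^{\pi \to \ga}$ carries the orthonormal basis of $\Hs_{-,+} = (\HH^{-1/2}(\curl_{\pa \Om}), \|\cdot\|_\pi)$ described in Corollary \ref{c:DevDuality}(iii) onto the orthonormal basis of $\Hs_{+,-} = (\HH^{-1/2}(\Div_{\pa \Om}), \|\cdot\|_\ga)$. Since $\n_\times^{\pi \to \ga} = -\ga_\top \pi_\top^{-1}$ is a homeomorphism from $\HH^{-1/2}(\curl_{\pa \Om})$ onto $\HH^{-1/2}(\Div_{\pa \Om})$, a bounded bijection mapping one orthonormal basis onto another is automatically unitary; the second claim will then follow at once from $\n_\times^{\ga \to \pi} = -(\n_\times^{\pi \to \ga})^{-1}$, because the inverse of a unitary operator and its multiple by a unimodular scalar are again unitary.

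First I would compute the action of the $\LLT$-unitary $\n_\times$ on the vectors $\vv_{\pm j}$ of (\ref{e:vj}). Using $\curlm_{\pa \Om} \uph_{H^1 (\pa \Om)} f = -\n \times \grad_{\pa \Om} \uph_{H^1(\pa \Om)} f$ together with $\n_\times^2 = -I_{\LLT}$ (which holds since $\n_\times$ is unitary with $(\n_\times)^* = -\n_\times$), one obtains for $j \in \NN$
\begin{gather*}
\n_\times \vv_{-j} = \la_j^{-1}\n_\times \grad_{\pa \Om} u_j = -\la_j^{-1}\curlm_{\pa \Om} u_j = -\vv_j, \qquad \n_\times \vv_j = \vv_{-j}.
\end{gather*}
Because every $\vv_j$ lies in $\LLT \cap \HH^{-1/2}(\curl_{\pa \Om})$, formula (\ref{e:npiga}) shows that $\n_\times^{\pi \to \ga}$ agrees with $\n_\times$ on these vectors. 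Recalling $\ww_j^{\mp,\pm} = \la_j^{\pm 1/2}\vv_j$ and $\la_{-j} = \la_j^{-1}$, I then translate the two relations above into
\begin{gather*}
\n_\times^{\pi \to \ga}\, \ww_j^{-,+} = \ww_{-j}^{+,-}, \qquad \n_\times^{\pi \to \ga}\, \ww_{-j}^{-,+} = -\ww_j^{+,-} \qquad (j \in \NN),
\end{gather*}
so that $\{\ww_j^{-,+}\}_{j \in \ZZ \setminus\{0\}}$ is mapped, up to a permutation of indices and unimodular factors, onto $\{\ww_j^{+,-}\}_{j \in \ZZ\setminus\{0\}}$.

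It remains to treat the cohomology summand $\KK_1(\pa \Om)$. Here I would use that $\n_\times$ maps $\KK_1$ onto itself (the surface identities exchange $\Div_{\pa \Om}$ and $\pm\curl_{\pa \Om}$, so the two defining conditions of $\KK_1$ are preserved) and acts there as an $\LLT$-unitary. Since both norms $\|\cdot\|_\pi$ and $\|\cdot\|_\ga$ restrict to $\|\cdot\|_{\LLT}$ on $\KK_1$ and the same system $\{\vv_{0,j}\}_{j=1}^{b_1(\pa \Om)}$ is taken as the orthonormal basis of this summand in both $\Hs_{\mp,\pm}$, the image $\{\n_\times \vv_{0,j}\}$ is again an orthonormal basis of $(\KK_1, \|\cdot\|_\ga)$. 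Combining this with the previous paragraph, $\n_\times^{\pi \to \ga}$ sends the orthonormal basis of $\Hs_{-,+}$ onto an orthonormal basis of $\Hs_{+,-}$, which finishes the proof.

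The computations are routine once the sign conventions are fixed; the only points requiring care are the verification that $\n_\times^{\pi \to \ga}$ genuinely coincides with $\n_\times$ on all the basis vectors (guaranteed by (\ref{e:npiga}), since each of them sits in $\LLT$ and in the relevant trace space) and the invariance $\n_\times \KK_1 = \KK_1$ together with the fact that the $\KK_1$-norms of $\Hs_{-,+}$ and $\Hs_{+,-}$ both coincide with $\|\cdot\|_{\LLT}$. I expect the bookkeeping of the index map $j \mapsto -j$ and of the signs to be the main, though minor, obstacle.
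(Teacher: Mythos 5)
Your proof is correct and follows essentially the same route as the paper's: both verify, via the identity $\curlm_{\pa \Om}\uph_{H^1(\pa \Om)} = -\n_\times \grad_{\pa \Om}\uph_{H^1(\pa \Om)}$, formula (\ref{e:npiga}), and the invariance of $\KK_1(\pa \Om)$, that $\n_\times^{\pi \to \ga}$ carries the orthonormal basis of Corollary \ref{c:DevDuality}\,(iii) for $\Hs_{-,+}$ onto an orthonormal basis of $\Hs_{+,-}$ up to unimodular factors. Your sign bookkeeping (sign $+1$ for $j>0$ and $-1$ for $j<0$) is in fact slightly more careful than the uniform sign stated in the paper's proof, but this is immaterial for unitarity.
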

\begin{proof}
It follows from  
$\curlm_{\pa \Om} \uph_{H^1 (\pa \Om)}  = - \n_\times   \grad_{\pa \Om} \uph_{H^1 (\pa \Om)}$ and (\ref{e:HDcurl})-(\ref{e:HDdiv}) that
the operators $\n_\times^{\pi(\ga) \to \ga(\pi)}$ coincide on $\KK_1 (\Om)$ with the unitary operator $\n_\times: \LLt \to \LLt$. 
Corollary \ref{c:DevDuality} (iii) and  
$\n_\times^{\pi \to \ga} \ww_j^{-,+}= - \ww_{-j}^{+,-}$, $j \in \ZZ \setminus \{0\}$, complete the proof.
\end{proof}


\begin{cor} \label{c:curlRT} 
(i) In the settings of Corollary \ref{c:DevDuality} (ii), the tuples
\[\text{
$(\HH^{-1/2} (\curl_{\pa \Om}),\LLt, \pi_\top, \  -\n_\times^{\pi \to \ga})$ and $(\HH^{-1/2} (\Div_{\pa \Om}) ,\LLt, \ii \ga_\top, \  -\n_\times^{\ga \to \pi})$ 
}
\]
are unitary reduction tuples  for $\curlm$ dual to each other in the sense of Proposition \ref{p:RT} (ii).

\item[(ii)] $(\LLt, \pi_{\top,2},  \ -\n_\times)$ and 
$(\LLt,  \ii \ga_{\top,2},  \ -\n_\times)$ are mutually dual Calkin triples for $\curlm$. They are associated with the respective  reduction tuples of statement (i).
\end{cor}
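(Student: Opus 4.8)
The plan is to establish (i) directly from the definitions of Section \ref{s:RT} and then read off the dual tuple and the two Calkin triples from the general machinery already developed. Here $A=\curln$, so $A^*=\curlm$, $\dom A^*=\HH(\curlm,\Om)$, the pivot space is $\Hs=\LLt$, and $\Hs_{-,+}$, $\Hs_{+,-}$ are the trace spaces normed as in Corollary \ref{c:DevDuality}(ii). First I would check that $(\HH^{-1/2}(\curl_{\pa\Om}),\LLt,\pi_\top,-\n_\times^{\pi\to\ga})$ is a reduction tuple: condition (R1) is exactly the surjectivity of $\pi_\top$ from $\HH(\curlm,\Om)$ onto $\HH^{-1/2}(\curl_{\pa\Om})$ asserted in (\ref{e:TrTh}); condition (R2) holds because $\n_\times^{\pi\to\ga}=-\ga_\top\pi_\top^{-1}$ is by construction a homeomorphism onto $\HH^{-1/2}(\Div_{\pa\Om})$; and for (R3), since $-\n_\times^{\pi\to\ga}\pi_\top v=\ga_\top v$, the right-hand side of (\ref{e:R3'}) reduces to $\<\pi_\top u|\ga_\top v\>_{\LLT}$, which equals $(\curlm u|v)_{\LL^2}-(u|\curlm v)_{\LL^2}$ by the integration-by-parts formula (\ref{e:IntByPpiga2}) read with the shorthand of Remark \ref{r:<>H}. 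To upgrade this to a \emph{unitary} reduction tuple I would invoke Lemma \ref{l:npiga}, which gives (U1), and then use (\ref{e:npiga}) to observe that on $\wt\Hs_{-,+}=\LLt\cap\HH^{-1/2}(\curl_{\pa\Om})$ the operator $-\n_\times^{\pi\to\ga}$ coincides with $-\n_\times$; since $\n_\times:\LLt\to\LLt$ is unitary, this yields (U2).

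Next I would identify the second tuple as the dual of the first. With $W=-\n_\times^{\pi\to\ga}$ and $\Gb=\pi_\top$, Proposition \ref{p:RT}(ii) produces the dual reduction tuple $(\Hs_{+,-},\LLt,\ii W\Gb,-W^{-1})$. A direct computation gives $\ii W\Gb=\ii\ga_\top$ and, using $\n_\times^{\ga\to\pi}=-(\n_\times^{\pi\to\ga})^{-1}$, also $-W^{-1}=-\n_\times^{\ga\to\pi}$, so the dual is precisely $(\HH^{-1/2}(\Div_{\pa\Om}),\LLt,\ii\ga_\top,-\n_\times^{\ga\to\pi})$. The symmetric application of Lemma \ref{l:npiga} and (\ref{e:npiga}) to $\n_\times^{\ga\to\pi}$ shows this second tuple is again unitary, completing the proof of (i).

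For (ii) I would apply Corollary \ref{c:RTtoCT} to each tuple of (i). For the first, $\dom\wt\Gb=\{u\in\HH(\curlm,\Om):\pi_\top u\in\LLt\}=\Himp(\curlm,\Om)$ by (\ref{e:Himp}), hence $\wt\Gb=\pi_{\top,2}$, while the rotation $\wt W$ is the unique unitary extension of $(-\n_\times^{\pi\to\ga})\uph_{\wt\Hs_{-,+}}=-\n_\times$, so $\wt W=-\n_\times$; the resulting Calkin triple $(\LLt,\pi_{\top,2},-\n_\times)$ is associated with the first reduction tuple in the sense of Definition \ref{d:c}. The same argument on the second tuple yields $(\LLt,\ii\ga_{\top,2},-\n_\times)$. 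Finally, to see that these two Calkin triples are mutually dual in the sense of Section \ref{s:curl}, I would use the trace identity $\ga_\top u=-\n_\times\pi_\top u$, so that $\ii\wt W\wt\Gb=\ii(-\n_\times)\pi_{\top,2}=\ii\ga_{\top,2}$, which is exactly the dual $(\LLt,\ii\wt W\wt\Gb,\wt W)$ of $(\LLt,\pi_{\top,2},-\n_\times)$.

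Most of the argument is bookkeeping with the objects already assembled in Sections \ref{s:curl}--\ref{s:RT}. The two steps that require genuine care are the verification of (U2)---that the homeomorphism $-\n_\times^{\pi\to\ga}$ restricts to an $\LLt$-isometry between the intersection spaces $\wt\Hs_{\mp,\pm}$, which is precisely where (\ref{e:npiga}) and Lemma \ref{l:npiga} are indispensable---and the identification of the rotation $\wt W$ with $-\n_\times$ together with the trace identity $\ga_\top=-\n_\times\pi_\top$ that underlies the Calkin-triple duality.
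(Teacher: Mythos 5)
Your proposal is correct and follows essentially the same route as the paper: (R1)--(R3) are verified from (\ref{e:TrTh}) and the integration-by-parts formula, (U1)--(U2) from (\ref{e:npiga}) and Lemma \ref{l:npiga}, the second tuple is obtained as the dual via Proposition \ref{p:RT}(ii), and part (ii) is read off from Corollary \ref{c:RTtoCT}. You merely spell out the verifications that the paper compresses into citations, including the correct identifications $W\pi_\top=\ga_\top$, $\dom\wt\Gb=\Himp(\curlm,\Om)$, and $\wt W=-\n_\times$.
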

\begin{proof}
(i) The conditions of Definition \ref{d:rt} follow from the definitions of $\n_\times^{\pi(\ga) \to \ga(\pi)}$ and (\ref{e:TrTh})--(\ref{e:IntByPpiga}). The conditions 
of Definition \ref{d:urt} follow from (\ref{e:npiga}) and the proof of 
Lemma \ref{l:npiga}. 

Statement (ii) follows from (i)   and  Corollary \ref{c:RTtoCT}.
\end{proof}

The connection between these unitary reduction tuples and Calkin triples  will be crucial for the rigorous handling of Leontovich-type boundary conditions, see Section \ref{s:GIBCa}.

\section{M-boundary tuples and abstract Maxwell operators}
\label{s:mrt}

The aim of this section is the description of all m-dissipative extensions of an ´abstract version´ of the Maxwell operator  $\M$ using m-boundary tuples of Definition \ref{d:MBT}.
We  introduce ´abstract Maxwell operators´ in the following way.
Let $A$ be a closed densely defined symmetric operator in a Hilbert space $\Hc$. Let $S$ be a bounded uniformly positive selfadjoint operator in $\Hc^2 = \Hc \oplus \Hc$ (so $S^{-1}$ is also bounded). One can define on $\Hc^2  \times \Hc^2$ another  inner product $(\cdot|\star)_{\Hc^{2,S}} := (S \cdot|\star)_{\Hc^2}$,
which generates an equivalent norm $\| \cdot \|_{\Hc^{2,S}}$. This defines the `weighted' Hilbert space $\Hc^{2,S} := (\Hc^2, \| \cdot \|_{\Hc^{2,S}})$.
As an \emph{abstract symmetric Maxwell operator}, we consider in $\Hc^{2,S}$ the `$S$-weighted'  operator 
\begin{align} \label{e:M}
M  \psi 
:=   
S^{-1} 
\begin{pmatrix}
 0 & \ii A \\  -\ii A & 0 
\end{pmatrix} \begin{pmatrix} \psi_1 \\ \psi_2 \end{pmatrix}, 
\quad \psi = \begin{pmatrix} \psi_1 \\ \psi_2 \end{pmatrix} \in \dom M = (\dom A)^2,
\end{align}
which is obviously symmetric, closed, and has the adjoint 
$M^* = S^{-1} \left( \begin{smallmatrix}
 0 & \ii A^* \\  -\ii A^* & 0 
\end{smallmatrix} \right).$

If $A = \curln$, the operator $M$ with a suitable $S$ becomes the Maxwell operator
$\M$ of (\ref{e:iM}). 

\subsection{M-boundary tuples: properties and connections with reduction tuples}
\label{s:MBT}


Reduction tuples (see Definition \ref{d:rt}) and m-boundary tuples (see Definition \ref{d:MBT}) are connected by the following statement.

\begin{prop} \label{p:M*MBT} 
Let two closed densely defined symmetric  operators, $A$ in $\Hc$ and $M$ in $\Hc^{2,S}$, be connected by (\ref{e:M}).
Let $(\Hs_{-,+},\Hs,\Gb, W)$ be a reduction tuple for $A^*$.
Let
\[
\text{ $\Ga_0 \psi := \Gb \psi_2 $ \quad and \quad $\Ga_1 \psi := \ii W  \Gb \psi_1$ \quad for $\psi = \{ \psi_1,\psi_2\} \in (\dom A^*)^2$.}
 \]
Then $(\Hs_{-,+},\Hs, \Ga_0 ,\Ga_1)$ is an m-boundary tuple for $M^*$.
\end{prop}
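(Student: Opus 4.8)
The plan is to verify directly the two defining conditions (M1) and (M2) of Definition~\ref{d:MBT} for the tuple $(\Hs_{-,+},\Hs,\Ga_0,\Ga_1)$, working throughout in the weighted Hilbert space $\Hc^{2,S}$ in which $M^*$ acts. Since $\dom M^* = (\dom A^*)^2$ and $\Gb$ is defined on all of $\dom A^*$, both $\Ga_0\psi = \Gb\psi_2$ and $\Ga_1\psi = \ii W\Gb\psi_1$ are well-defined linear maps on $\dom M^*$, with values in $\Hs_{-,+}$ and $\Hs_{+,-}$ respectively.

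First I would dispatch (M1). The two components $\psi_1,\psi_2$ of $\psi\in(\dom A^*)^2$ may be chosen independently, so it suffices to see that $\psi_2\mapsto\Gb\psi_2$ maps onto $\Hs_{-,+}$ and $\psi_1\mapsto \ii W\Gb\psi_1$ maps onto $\Hs_{+,-}$. The first is exactly surjectivity of $\Gb$ (condition (R1), via the identification of $\Gb$ with $\Gb'$ on $\Gr A^*$); the second then follows since $W$ is a homeomorphism of $\Hs_{-,+}$ onto $\Hs_{+,-}$ by (R2) and multiplication by $\ii$ is a bijection. Hence $\Ga=\{\Ga_0,\Ga_1\}$ maps $\dom M^*$ onto $\Hs_{-,+}\oplus\Hs_{+,-}$.

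The substance is in (M2). The key simplification is that the weight disappears: since $(\cdot|\star)_{\Hc^{2,S}}=(S\cdot|\star)_{\Hc^2}$ and $SM^* = \left(\begin{smallmatrix} 0 & \ii A^* \\ -\ii A^* & 0\end{smallmatrix}\right)$, the factor $S^{-1}$ inside $M^*$ is absorbed. Writing $f=\{f_1,f_2\}$, $g=\{g_1,g_2\}$ and expanding, together with the sesquilinearity of $(\cdot|\star)_{\Hc}$, I expect to obtain
\[
(M^*f|g)_{\Hc^{2,S}} - (f|M^*g)_{\Hc^{2,S}} = \ii\,[f_2|g_1]_{A^*} - \ii\,[f_1|g_2]_{A^*},
\]
where $[u|v]_{A^*} := (A^*u|v)_{\Hc} - (u|A^*v)_{\Hc}$ is the boundary form of $A^*$. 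Applying the reduction-tuple identity (R3) in the form (\ref{e:R3'}), namely $[u|v]_{A^*} = \<\Gb u|W\Gb v\>_\Hs$, this expression becomes $\ii\<\Gb f_2|W\Gb g_1\>_\Hs - \ii\<\Gb f_1|W\Gb g_2\>_\Hs$.

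On the other side, substituting the definitions of $\Ga_0,\Ga_1$ and using that the $\Hs$-pairing is sesquilinear, the right-hand side of (M2), $\<\Ga_1 f|\Ga_0 g\>_\Hs - \<\Ga_0 f|\Ga_1 g\>_\Hs$, reduces to $\ii\<\Gb f_2|W\Gb g_1\>_\Hs + \ii\<W\Gb f_1|\Gb g_2\>_\Hs$. The first summands on the two sides already agree, so what remains is the identity $\<W\Gb f_1|\Gb g_2\>_\Hs = -\<\Gb f_1|W\Gb g_2\>_\Hs$; this is precisely $W^\#=-W$ from Proposition~\ref{p:RT}(iii) evaluated at $u=\Gb f_1$, $v=\Gb g_2$. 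I expect the main (and essentially only) obstacle to be clerical rather than conceptual: keeping the factors of $\ii$ and the conjugations of the sesquilinear forms consistent so that the cross terms line up exactly, and making sure the single genuine input, the skew-adjointness $W^\#=-W$, is invoked in the correct pairing between $\Hs_{-,+}$ and $\Hs_{+,-}$.
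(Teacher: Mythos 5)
Your proof is correct and takes exactly the route the paper intends: the paper's own proof simply declares the verification of (M1)--(M2) straightforward, and your expansion of it — the weight $S$ cancelling, the boundary form reducing to $\ii[f_2|g_1]_{A^*}-\ii[f_1|g_2]_{A^*}$, the application of (R3), and the final appeal to $W^\#=-W$ from Proposition \ref{p:RT}(iii) — is the correct bookkeeping. The surjectivity argument for (M1) via independence of $\psi_1,\psi_2$ and (R1)--(R2) is likewise sound.
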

\begin{proof} The verification of conditions (M1)-(M2) of Definition \ref{d:MBT} is straightforward.
\end{proof}

Let $V$ be a linear homeomorphism from $\Hs$ to $\Hs_{-,+}$.
Then it is obvious from the definitions of Section \ref{s:MDAbstract} that $V^\# $ is a linear homeomorphism from $\Hs_{+,-}$ to $\Hs$ and 
\begin{gather} \label{e:VV++}
( V^\# h_{+,-} | V^{-1} h_{-,+})_\Hs = \< h_{+,-}|h_{-,+}\> _\Hs \quad \text{ for all } h_{\mp,\pm} \in \Hs_{\mp,\pm}.
\end{gather}

\begin{prop}\label{p:BT}
 Let $(\Hs_{-,+},\Hs,\Ga_0,\Ga_1)$ be an m-boundary tuple for the adjoint $\A^*$ to a closed densely defined symmetric operator $\A$. Let $V$ be an arbitrary linear homeomorphism from $\Hs$ to $\Hs_{-,+}$. Then 
$(\Hs, V^{-1} \Ga_0, V^\# \Ga_1) $ is a boundary triple for $\A^*$ (see (\ref{e:BTr}) for the definition).
\end{prop}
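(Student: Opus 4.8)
The plan is to verify directly the two defining properties of a boundary triple listed in (\ref{e:BTr}) for the maps $\wh\Ga_0 := V^{-1}\Ga_0$ and $\wh\Ga_1 := V^\#\Ga_1$: namely the surjectivity of $\wh\Ga = \{\wh\Ga_0,\wh\Ga_1\}$ onto $\Hs\oplus\Hs$, and the abstract Green identity written with the genuine inner product $(\cdot|\star)_\Hs$ in place of the $\Hs$-pairing $\<\cdot|\star\>_\Hs$. The underlying observation is that, since $\Ga_0 f\in\Hs_{-,+}$ and $\Ga_1 f\in\Hs_{+,-}$ for $f\in\dom\A^*$ (condition (M1) of Definition \ref{d:MBT}), one has $\wh\Ga = (V^{-1}\oplus V^\#)\circ\Ga$, where $V^{-1}:\Hs_{-,+}\to\Hs$ and $V^\#:\Hs_{+,-}\to\Hs$ are the homeomorphisms recalled just before the statement.

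First I would settle surjectivity. By (M1) the map $\Ga$ is surjective from $\dom\A^*$ onto $\Hs_{-,+}\oplus\Hs_{+,-}$, while $V^{-1}\oplus V^\#$ is a homeomorphism, in particular a bijection, of $\Hs_{-,+}\oplus\Hs_{+,-}$ onto $\Hs\oplus\Hs$, because $V$, and hence both $V^{-1}$ and $V^\#$, are homeomorphisms. Thus the composition $\wh\Ga = (V^{-1}\oplus V^\#)\circ\Ga$ is surjective onto $\Hs\oplus\Hs$, which is the first requirement in (\ref{e:BTr}).

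Next I would derive the Green identity starting from (M2); it suffices to convert each of the two $\Hs$-pairings into an inner product of the hatted maps. For the first term, applying (\ref{e:VV++}) with $h_{+,-} = \Ga_1 f$ and $h_{-,+} = \Ga_0 g$ yields $(\wh\Ga_1 f|\wh\Ga_0 g)_\Hs = (V^\#\Ga_1 f|V^{-1}\Ga_0 g)_\Hs = \<\Ga_1 f|\Ga_0 g\>_\Hs$. For the second term I would apply (\ref{e:VV++}) with $h_{+,-} = \Ga_1 g$ and $h_{-,+} = \Ga_0 f$ and then take complex conjugates, invoking the Hermitian (adjoint-pairing) symmetry $\overline{\<a|b\>_\Hs} = \<b|a\>_\Hs$ of the $\Hs$-pairing from Proposition \ref{p:MixOrder}(iii) together with Remark \ref{r:<>H}, to obtain $(\wh\Ga_0 f|\wh\Ga_1 g)_\Hs = (V^{-1}\Ga_0 f|V^\#\Ga_1 g)_\Hs = \<\Ga_0 f|\Ga_1 g\>_\Hs$. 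Substituting these two identities into (M2) gives precisely $(\A^*f|g)_\Hc - (f|\A^*g)_\Hc = (\wh\Ga_1 f|\wh\Ga_0 g)_\Hs - (\wh\Ga_0 f|\wh\Ga_1 g)_\Hs$, which is the second requirement in (\ref{e:BTr}).

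I do not expect a serious obstacle here; the argument is essentially a bookkeeping verification. The one point that requires care is tracking the membership of each trace in the correct mixed-order space, namely $\Ga_0 f\in\Hs_{-,+}$ and $\Ga_1 f\in\Hs_{+,-}$, so that (\ref{e:VV++}) is applied with the intended roles of $h_{-,+}$ and $h_{+,-}$, together with the correct use of the adjoint-pairing symmetry in the second term, since swapping the two arguments of $\<\cdot|\star\>_\Hs$ interchanges the two dual spaces $\Hs_{-,+}$ and $\Hs_{+,-}$.
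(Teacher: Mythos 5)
Your proposal is correct and follows essentially the same route as the paper's proof: surjectivity of $\wh\Ga$ from (M1) together with the surjectivity of $V^{-1}$ and $V^\#$, and the Green identity from (M2) combined with (\ref{e:VV++}). Your extra care with the conjugate-symmetry of the pairing in the second term is a detail the paper leaves implicit, but it is the intended argument.
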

\begin{proof}
The surjectivity of $\wh \Ga : \dom \A^* \to \Hs^2$,  $\wh \Ga: f \mapsto \{  V^{-1} \Ga_0,V^\# \Ga_1\}$, follows from (M1) of Definition \ref{d:MBT} and the surjectivity of $V$ and $V^\#$. Using (M2) of Definition \ref{d:MBT} and  (\ref{e:VV++}) 
one gets $
(\A^*f|g)_\Hc - (f|\A^*g)_\Hc =  (V^\# \Ga_1 f | V^{-1} \Ga_0 g )_\Hs  - (V^{-1} \Ga_0 f | V^\# \Ga_1 g )_\Hs $. 
\end{proof}

In abstract settings, the unitary operator $\Uop_{\Hs \to \Hs_{-,+}}$ defined in Proposition \ref{p:MixOrder} provides a convenient choice of a homeomorphism $V$. In this case, $\Uop_{\Hs \to\Hs_{-,+}}^\#=\Uop_{\Hs_{+,-} \to \Hs } = \Uop_{ \Hs \to\Hs_{+,-}}^{-1}$.

Proposition \ref{p:BT} allows one to translate known  results about boundary triples (see e.g. \cite{K75,GG91,DM95,DM17,BHdS20}) to the language of m-boundary tuples, which, at least for Maxwell operators, are better adjusted to the trace maps. In particular, Proposition \ref{p:BT} and well-known results on boundary triples (e.g. \cite[Section 7.1]{DM17}) imply the following.

\begin{prop} \label{p:PropMBT}
A closed densely defined symmetric operator $\A$ has equal deficiency indices $n_+ (\A) = n_- (\A)$ exactly when there exists an m-boundary tuple $(\Hs_{-,+},\Hs,\Ga_0,\Ga_1)$ for $\A^*$. If this is the case, then $n_\pm (\A) = \dim \Hs$ and 
the operators $\Ga_0$ and $\Ga_1$ are bounded.
\end{prop}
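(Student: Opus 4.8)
The plan is to reduce the whole statement to the classical theory of ordinary boundary triples by means of Proposition \ref{p:BT} and the ``trivial'' construction given in the remark following Definition \ref{d:MBT}. Concretely, I would invoke three standard facts about boundary triples for $\A^*$ (see \cite{K75,GG91,DM95} and \cite[Section 7.1]{DM17}): a boundary triple $(\Hs,\wh\Ga_0,\wh\Ga_1)$ for $\A^*$ exists if and only if $\A$ has equal deficiency indices $n_+(\A)=n_-(\A)$; whenever it exists one has $n_\pm(\A)=\dim\Hs$; and the two maps $\wh\Ga_0,\wh\Ga_1$ are bounded from $(\dom\A^*,\|\cdot\|_{\dom\A^*})$ into $\Hs$ (this last fact being established exactly as Proposition \ref{p:RT}(i), cf. \cite[Lemma 7.2]{DM17}).

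For the equivalence, first suppose an m-boundary tuple $(\Hs_{-,+},\Hs,\Ga_0,\Ga_1)$ for $\A^*$ is given. I would fix any homeomorphism $V:\Hs\to\Hs_{-,+}$, for instance $V=\Uop_{\Hs\to\Hs_{-,+}}$ from Proposition \ref{p:MixOrder}. Proposition \ref{p:BT} then yields a boundary triple $(\Hs,V^{-1}\Ga_0,V^\#\Ga_1)$ for $\A^*$ whose auxiliary space is exactly $\Hs$, so the classical existence criterion gives $n_+(\A)=n_-(\A)$ and the dimension statement gives $n_\pm(\A)=\dim\Hs$. Conversely, if $n_+(\A)=n_-(\A)$, the classical theory produces a boundary triple $(\Hs,\wh\Ga_0,\wh\Ga_1)$ with $\dim\Hs=n_\pm(\A)$, and the trivial duality $(\Hs_{\mp,\pm},\|\cdot\|_{\Hs_{\mp,\pm}})=(\Hs,\|\cdot\|_\Hs)$ turns it into the m-boundary tuple $(\Hs,\Hs,\wh\Ga_0,\wh\Ga_1)$. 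This settles the equivalence together with the equality $n_\pm(\A)=\dim\Hs$.

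Finally, for boundedness I would transfer the classical boundedness of the boundary-triple maps back through the homeomorphisms. Keeping $V$ as above and the boundary triple $(\Hs,V^{-1}\Ga_0,V^\#\Ga_1)$ of Proposition \ref{p:BT}, the maps $V^{-1}\Ga_0$ and $V^\#\Ga_1$ are bounded from $(\dom\A^*,\|\cdot\|_{\dom\A^*})$ into $\Hs$. Since $V:\Hs\to\Hs_{-,+}$ is a homeomorphism, $\Ga_0=V(V^{-1}\Ga_0)$ is bounded into $\Hs_{-,+}$; and since $V^\#:\Hs_{+,-}\to\Hs$ is a homeomorphism (Section \ref{s:MDAbstract}), its inverse is a homeomorphism $\Hs\to\Hs_{+,-}$ and $\Ga_1=(V^\#)^{-1}(V^\#\Ga_1)$ is bounded into $\Hs_{+,-}$. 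No genuine obstacle arises: the content of the proposition is already contained in the classical boundary-triple theory, and the only care needed is to keep the auxiliary space and the two target spaces $\Hs_{\mp,\pm}$ consistent under the homeomorphisms $V$ and $(V^\#)^{-1}$, and to recall that the underlying boundedness statement is itself obtained by a closed-graph and surjectivity argument of the type carried out in Proposition \ref{p:RT}(i).
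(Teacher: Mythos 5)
Your argument is correct and follows exactly the route the paper intends: the paper's own "proof" is a one-line appeal to Proposition \ref{p:BT} together with the classical boundary-triple facts from \cite[Section 7.1]{DM17}, and your proposal simply fills in those details (regularization via $V$, the trivial duality for the converse, and transferring boundedness back through the homeomorphisms $V$ and $(V^\#)^{-1}$). No gaps.
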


\subsection{Extensions of symmetric operators in terms of m-boundary tuples} \label{s:MBT-BT}

Let $\A$ be a densely defined symmetric operator in $\Hc$ with equal deficiency indices.
Let $(\Hs_{-,+},\Hs, \Ga_0 ,\Ga_1)$ be a certain m-boundary tuple for $\A^*$.
So $\Ga:f \mapsto \{ \Ga_0 f, \Ga_1 f\}$ is a surjective 
bounded operator from
$(\dom \A^*,\| \cdot \|_{\dom \A^*})$ onto $\Hs_{-,+} \oplus \Hs_{+,-}$ (see Proposition \ref{p:PropMBT}). 

A \emph{linear relation} from a Hilbert space $\Hs_1$ to a Hilbert space $\Hs_2$ is by definition a linear subspace in $\Hs_1 \oplus \Hs_2$, e.g., a graph $\Gr B$ of an operator $B:\dom B \subseteq \Hs_1 \to \Hs_2$ is a linear relation. Identifying operators with their graphs, one can consider operators as particular cases of linear relations, i.e., a linear relation $\Th$ can be considered as an operator if and only if its \emph{multivalued part} $\Th (0):=\left\{h_2 \in \Hs_2 \ : \ \{0,h_2\} \in \Th \right\}$ is equal to $\{0,0\}$. Then the notions of closed relation and closed operator are consistent (see \cite{K75,DM95,HW12,DM17,BHdS20} for the basics on linear relations). If $\Th$ is a linear relation from $\Hs$ to $\Hs$, $\Th$ is said to be a linear relation in $\Hs$.

Recall that an operator $\wt \A$ is called an \emph{admissible extension of} $\A$ if $\Gr \A \subseteq \Gr \wt \A \subseteq \Gr \A^*$. It follows from Definition \ref{d:MBT} that $\Gr \A = \ker \Ga$. Consequently, there exists 1--to--1 correspondence between: (i) the family of admissible extensions $\wt \A$ of $\A$, 
(ii) the family of subspaces $\Gr (\wt \A) /\Gr (\A)$ of the factor space $\Gr (\A^*) / \Gr (\A)$, 
(iii) and the family of 
linear relations $\Theta = \left\{ \{ \Ga_0 u , \Ga_1 u \} \ : \ u \in \Gr \wt \A \right\}$ from $\Hs_{-,+}$ to $\Hs_{+,-}$.
Namely, for the correspondence (i) $\leftrightarrow$ (iii), the restriction $\A_\Theta$ of $\A^*$ associated with a linear relation $\Th$ is defined by
\begin{gather} \label{e:ATh}
\text{$\A_\Theta := \A^*\uph_{\dom \A_\Th}$, \ where $\dom \A_\Th := \Ga^{-1} \Th = \left\{ u \in \dom \A^* \ : \ \{ \Ga_0 u , \Ga_1 u \} \in \Th \right\}$}.
\end{gather}
Then $\Gr (\A_\Th) / \Gr (\A)$ is the corresponding subspace of the factor space $\Gr (\A^*) / \Gr (\A)$.

\begin{defn} \label{d:Th}
Let $\Th$ be a linear relation from $\Hs_{-,+}$ to $\Hs_{+,-}$.

\item[(i)] A numerical cone of $\Th$ is defined by 
$
\Ncone (\Th) := \{ \< h_1| h_0 \>_\Hs \ : \ \{ h_0 , h_1 \} \in \Th\}  .
$

\item[(ii)] A linear relation $\Th$ is said to be symmetric, nonnegative, dissipative,  or accretive if, resp., $\Ncone (\Th) \subseteq \RR$, $\Ncone (\Th) \subseteq [0,+\infty) $, $\Ncone (\Th) \subseteq \overline{\CC}_- $, or $\Ncone (\Th) \subseteq \ii \overline{\CC}_-$, where $\overline{\CC}_\pm := \{z \in \CC \ : \ \pm \im z \ge 0\}$.  Besides, a linear relation $\Th$ in each of these classes is called maximal if it cannot be extended to another linear relation of the same class. 

\item[(iii)] The $\Hs$-pairing-adjoint linear relation $\Th^\#$ consist of all $\{g,g'\} \in \Hs_{-,+} \oplus \Hs_{+,-}$ such that 
$\<f' | g \>_\Hs  =  \<f | g'\>_\Hs$
 \quad 
 for all  $\{f,f'\} \in \Th$.

\item[(iv)] $\Th$ is called $\Hs$-pairing-selfadjoint if $\Th =\Th^\#$.

\item[(v)] A linear operator $T:\dom T \subseteq \Hs_{-,+} \to \Hs_{+,-}$ is called symmetric, nonnegative, dissipative,   or accretive if the linear relation $\Gr T$ is so.
\end{defn}

These are mixed-order duality analogues of standard definitions, which are well-known in the case $\Hs_{\mp,\pm} = \Hs$. In particular, (iii) is a generalization of the standard definition of the adjoint linear relation $\Th^*$ in $\Hs$. The names for the above classes of linear relations and operators are often interchanged, cf. \cite{Kato13,LL04,DM17}. We use  the terminology of mathematical physics \cite{E12} which places numerical ranges of dissipative operator to $\overline{\CC}_-$. 

\begin{rem} \label{r:AThvsTh} Obviously,
$\Gr \A_{\Th_1} \subseteq  \Gr \A_{\Th_2}$ if and only if $\Th_1 \subseteq \Th_2$.
Note that $\Th^\#$ is a closed linear relation from $\Hs_{-,+}$ to $\Hs_{+,-}$ and that $(\Th^\#)^\#$ is the closure $\overline{\Th}$ of $\Th$. 
The definitions of $\Hs$-pairing-adjoint linear relations and operators are obviously consistent  in the sense that $\Gr (T^\#) = (\Gr T)^\#$, see Section \ref{s:MDAbstract}. For the operator (\ref{e:ATh}), $(\A_\Th)^* = \A_{\Th^\#}$ and $\overline{\A_\Theta} = \A_{\overline{\Th}}$. It is clear that 
$\A_\Theta$ is closed if and only if $\Theta$ is so.
\end{rem}

\begin{rem} \label{r:m-D}
A maximal dissipative linear relation is closed. A graph $\Gr T$ of a maximal dissipative operator $T$ is not necessarily a maximal dissipative linear relation (see \cite{P59} for an example of nonclosed maximal dissipative operator $T$). A densely defined operator $T:\dom T \subseteq \Hc \to \Hc$ is maximal dissipative (maximal accretive) if only if $\Gr T$ is so \cite{P59}.
\end{rem}

\begin{cor} \label{c:MBTm-D} Let an operator $\wt \A$ be a dissipative extension of $\A$. Then:
\item[(i)] there exists a dissipative linear relation $\Th$ from $\Hs_{-,+}$ to $\Hs_{+,-}$ such that $\wt \A=\A_\Th$, where $\A_\Th$ is defined by (\ref{e:ATh}). 

\item[(ii)] The operator $\A_\Th$ is m-dissipative if and only if the linear relation $\Th$ is maximal dissipative.
\end{cor}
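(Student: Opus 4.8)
The plan is to reduce everything to the classical boundary-triple theory via Proposition \ref{p:BT} and then transfer the conclusions back through the mixed-order duality. Throughout I would use the identity obtained by setting $g=f$ in (M2) of Definition \ref{d:MBT} together with the adjoint-pairing relation of Proposition \ref{p:MixOrder}(iii), namely
\[
\im (\A^* f | f)_{\Hc} = \im \<\Ga_1 f | \Ga_0 f\>_\Hs , \qquad f \in \dom \A^* .
\]
Since the surjectivity (M1) guarantees that $\{\{\Ga_0 f,\Ga_1 f\}: f\in \dom \A_\Th\}$ is exactly $\Th$, this identity yields at once the \textbf{core equivalence}: for any linear relation $\Th$ from $\Hs_{-,+}$ to $\Hs_{+,-}$, the operator $\A_\Th$ of (\ref{e:ATh}) is dissipative if and only if $\Ncone(\Th)\subseteq \overline{\CC}_-$, i.e.\ if and only if $\Th$ is a dissipative linear relation (Definition \ref{d:Th}).

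For statement (i), I would begin from the fact that a dissipative extension of a densely defined symmetric operator is always a restriction of its adjoint, so that $\Gr \A \subseteq \Gr \wt \A \subseteq \Gr \A^*$ (see \cite{K77}, as already invoked in Remark \ref{r:div=0}). Hence $\wt \A$ is admissible, and by the 1-to-1 correspondence preceding the corollary there is a linear relation $\Th=\{\{\Ga_0 f,\Ga_1 f\}: f\in\dom\wt\A\}$ with $\wt\A=\A_\Th$. As $\wt\A$ is dissipative, the core equivalence forces $\Th$ to be dissipative, which is (i).

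For statement (ii), I would fix the homeomorphism $V:=\Uop_{\Hs\to\Hs_{-,+}}$ of Proposition \ref{p:MixOrder}, so that $(\Hs, V^{-1}\Ga_0, V^\#\Ga_1)$ is a genuine boundary triple for $\A^*$ by Proposition \ref{p:BT}. To each $\Th$ I associate the relation $\wh\Th:=\{\{V^{-1}h_0, V^\# h_1\}:\{h_0,h_1\}\in\Th\}$ in $\Hs$. Because $V$ and $V^\#$ are homeomorphisms, $\Th\mapsto\wh\Th$ is an inclusion-preserving bijection onto the relations in $\Hs$, and one checks directly (using injectivity of $V^{-1},V^\#$) that $\A_\Th=\A_{\wh\Th}$, the extension attached to $\wh\Th$ through the boundary triple. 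The decisive point is that the numerical cones coincide: by (\ref{e:VV++}), for $\{h_0,h_1\}\in\Th$ one has $(V^\# h_1\,|\,V^{-1}h_0)_\Hs=\<h_1|h_0\>_\Hs$, whence $\Ncone(\wh\Th)=\Ncone(\Th)$. Thus $\wh\Th$ is dissipative, respectively maximal dissipative, in the ordinary Hilbert-space sense precisely when $\Th$ is so in the sense of Definition \ref{d:Th}. Finally, Kochubei's description of extensions via boundary triples \cite{K75} (see also \cite{DM95,DM17,BHdS20}) asserts that $\A_{\wh\Th}$ is m-dissipative iff $\wh\Th$ is a maximal dissipative relation in $\Hs$; chaining the equivalences gives (ii).

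The routine parts — the $g=f$ computation, the bijectivity and inclusion-preservation of $\Th\mapsto\wh\Th$, and the identity $\A_\Th=\A_{\wh\Th}$ — are straightforward. I expect the main obstacle to be the careful bookkeeping of the several competing notions of maximality and closedness: one must invoke Remark \ref{r:m-D} to keep apart ``maximal dissipative operator'', ``maximal dissipative linear relation'', and ``m-dissipative'', and make sure the boundary-triple result is applied with the correct notion (maximal dissipative \emph{relation}), since $\wh\Th$ may genuinely fail to be a graph even when the target extension is an operator.
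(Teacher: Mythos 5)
Your proposal is correct. Part (i) coincides with the paper's argument: admissibility of dissipative extensions via \cite{K77}, the bijection between admissible extensions and linear relations, and the identity $\im (\A^* f|f)_{\Hc}=\im\<\Ga_1 f|\Ga_0 f\>_{\Hs}$ extracted from (M2). For part (ii) you take a slightly different route: the paper establishes the equivalence ``$\A_\Th$ maximal dissipative $\Leftrightarrow$ $\Th$ maximal dissipative'' directly from the inclusion-preserving correspondence at the level of the m-boundary tuple and then invokes Phillips' theorem (a densely defined maximal dissipative operator is m-dissipative), whereas you first transport $\Th$ to the relation $\wh\Th$ in $\Hs$ via the regularizing homeomorphism $V=\Uop_{\Hs\to\Hs_{-,+}}$ of Proposition \ref{p:BT} and then cite the classical boundary-triple result. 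The two are equivalent in substance — the cited boundary-triple statement is itself proved via Phillips — and your detour is exactly the translation mechanism the paper advertises after Proposition \ref{p:BT} and uses later in Lemma \ref{l:OpExt}; its only cost is the extra bookkeeping of verifying $\A_\Th=\A_{\wh\Th}$ and $\Ncone(\wh\Th)=\Ncone(\Th)$, which you do correctly via (\ref{e:VV++}). Your closing caution about keeping apart maximal dissipative operators, maximal dissipative relations, and m-dissipativity is precisely the point of Remark \ref{r:m-D} and is well placed.
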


\begin{proof} 
(i) Let $\wt \A$ be a dissipative extension of $\A$. Since every dissipative extension of $\A$ is admissible (see \cite{K77} or \cite[Lemma 7.13]{DM17}), one sees that $\wt \A$ has the form $\A_\Th$. The dissipativity of $\Th$  follows from the dissipativity of $\A_\Th$ and condition (M2) of Definition \ref{d:MBT}.

(ii) It is obvious from the definition of m-boundary tuple and Definition \ref{d:Th} that $\A_\Th$ is a maximal dissipative operator exactly when $\Th$ is a maximal dissipative linear relation from $\Hs_{-,+}$ to $\Hs_{+,-}$. 
By the result of Phillips \cite{P59}, the densely defined operator $\A_\Th$ is maximal dissipative exactly when it is m-dissipative. This completes the proof of (ii).
\end{proof}

\begin{rem} \label{r:OpRel} 
(i)  $\A_\Th = (\A_\Th)^*$  if and only if $\Th = \Th^\#$ (due to $
(\A_\Th)^* = \A_{\Th^\#} 
$).

(ii) It is easy to see from Definitions \ref{d:MBT} and \ref{d:Th}, Remark \ref{r:m-D}, and Corollary \ref{c:MBTm-D}
that an operator $\A_\Th$ is (maximal) dissipative, symmetric, accretive,  if and only if $\Th$ is so as a linear relation from $\Hs_{-,+}$ to $\Hs_{+,-}$.
\end{rem}

Let $\Xi$ be a dissipative linear relation in $\Hs$. Then it is easy to see that there exists  a unique operator $\Cay_\Xi:\dom \Cay_\Xi \subseteq \Hs \to \Hs$ such that 
\begin{gather} \label{e:Cay}
\dom \Cay_\Xi = \{ h' - \ii h \ : \ \{h,h'\} \in \Xi\} , \quad \Cay_\Xi (h' - \ii h) = h' + \ii h \text{ for all } \{h,h'\} \in \Xi .
\end{gather}
Indeed, the dissipativity of $\Xi$ implies the following statement:
\begin{gather} \label{e:f=hj-hj}
\text{if $f=h'_j - \ii h_j$ for $\{h_j,h'_j\} \in \Xi$, $j=1,2$, then $h_1=h_2$ and $h_1'=h_2'$.}
\end{gather}

The operator $\Cay_\Xi$ is called \emph{the Cayley transform of $\Xi$},  cf. \cite{AG,DM17,BHdS20}. It is easy to see that $\Cay_\Xi$ is a contractive operator  in $\Hs$ for every dissipative $\Xi$   \cite{K75,K77} (see also \cite{DM17,BHdS20}). 
If $T$ is a dissipative operator in $\Hs$, then the Cayley transform of $T$ is defined as $\Cay_{\Gr T}$ and one has $\Cay_{\Gr T} = (T+\ii I_\Hs)(T-\ii I_\Hs)^{-1}$.

Let $T$ be an arbitrary contractive operator in $\Hs$. Then it is easy to see that 
\begin{multline} \label{e:inverseCayley}
\text{the linear relation  $\Xi = \Bigl\{ \{(T-I_\Hs)f, \ii (T+I_\Hs)f \} \ : \ f \in \dom T \Bigr\}$ is dissipative } \\ 
\text{and that $\Xi$ is \emph{the inverse Cayley transform of $T$} in the sense $\Cay_\Xi=T$.}
\end{multline}

\begin{lem} \label{l:OpExt} 
Let $V$ be a linear homeomorphism from $\Hs$ onto $\Hs_{-,+}$,
let $\Th$ be a linear relation from $\Hs_{-,+}$ to $\Hs_{+,-}$.
Let $\Th_V = \bigl( \begin{smallmatrix} V^{-1} & 0 \\
0 & V^\# \end{smallmatrix} \bigr) \Th$ be the linear relation in $\Hs$ defined by
$\Th_V := 
\{ \{V^{-1} h_{-,+}, V^\# h_{+,-} \} \ : \ \{ h_{-,+},h_{+,-}\} \in \Th \} .$
Then:
\item[(i)] $(V^\#)^{-1} = (V^{-1})^\#$ (note that this is a  homeomorphism from $\Hs$ onto $\Hs_{+,-}$).
\item[(ii)] $\Th$ is $\Hs$-pairing-selfadjoint, symmetric, nonnegative,  dissipative,  closed exactly when $\Th_V$ in $\Hs$  is  selfadjoint, symmetric, nonnegative, dissipative, closed, respectively.
\item[(iii)] An operator $T:\dom T \subseteq \Hs_{-,+} \to \Hs_{+,-}$ is $\Hs$-pairing-selfadjoint, nonnegative,  dissipative,
closed, closable exactly when the operator $V^\# T V$ in $\Hs$ is so.
\item[(iv)] $(\Th^\#)_V =(\Th_V)^*$
\item[(v)] $\Th$ is symmetric if and only if $\Th \subseteq \Th^\#$.
\item[(vi)] The following three statements are equivalent:
{
\subitem(vi.a) $\Th$ is maximal dissipative;
\subitem(vi.b) $\Th_V$ is maximal dissipative;
\subitem(vi.c) there exists a contraction $K$ on $\Hs$ 
with the property that 
}
\begin{gather} \label{e:abcK0}
\text{$\Th$ consists of $\{h_{-,+},h_{+,-}\} $ such that }  (K+I_\Hs) V^{-1} h_{-,+}   + \ii (K-I_\Hs) V^\# h_{+,-} = 0 .
\end{gather}
\item[(vii)] If (vi.a)-(vi.c) hold true, then $K = \Cay_{\Th_V}$. Besides, $\Th$ is $\Hs$-pairing-selfadjoint if and only if $K$ is a unitary operator on $\Hs$.
\item[(viii)] Assume that $\Th$ is dissipative. Then $\Psi$ is a maximal dissipative extension of $\Th$ if and only if $\Psi_V$ is an inverse Cayley transform of a contraction $K$ on $\Hs$ satisfying 
$\Gr K \supseteq \Gr \Cay_{\Th_V}$. 
\end{lem}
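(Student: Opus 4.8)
The plan is to push the whole statement through the pivot space $\Hs$, where it becomes the classical description of maximal dissipative extensions by the Cayley transform. The starting observation is that $\Th \mapsto \Th_V$ is a bijection between linear relations from $\Hs_{-,+}$ to $\Hs_{+,-}$ and linear relations in $\Hs$ that preserves inclusion in both directions: since $V^{-1}$ and $V^\#$ are homeomorphisms onto $\Hs$ (statement~(i)), the assignment $\{h_{-,+},h_{+,-}\} \mapsto \{V^{-1}h_{-,+}, V^\# h_{+,-}\}$ is a linear bijection of $\Hs_{-,+}\oplus\Hs_{+,-}$ onto $\Hs\oplus\Hs$, so $\Th_1 \subseteq \Th_2$ if and only if $(\Th_1)_V \subseteq (\Th_2)_V$. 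By statement~(ii) this bijection transfers dissipativity, and by statements~(vi)--(vii) it transfers maximal dissipativity together with the Cayley description.

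First I would apply statements~(vi)--(vii) to the relation $\Psi$ itself: $\Psi$ is maximal dissipative if and only if $\Psi_V$ is the inverse Cayley transform of a contraction $K$ on $\Hs$, in which case $K = \Cay_{\Psi_V}$. Thus it remains only to characterize, among such maximal dissipative $\Psi$, those that extend $\Th$, and to show this is exactly the condition $\Gr K \supseteq \Gr \Cay_{\Th_V}$. Because $\Th$ is dissipative, $\Th_V$ is dissipative by statement~(ii) and $\Cay_{\Th_V}$ is a well-defined contractive operator by (\ref{e:Cay}), so the right-hand side makes sense.

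The remaining ingredient is that the Cayley transform is an order isomorphism. From the explicit graph $\Gr\Cay_{\Xi} = \{\{h'-\ii h, h'+\ii h\} : \{h,h'\}\in\Xi\}$ one reads off $\Xi_1 \subseteq \Xi_2 \Rightarrow \Gr\Cay_{\Xi_1} \subseteq \Gr\Cay_{\Xi_2}$, while (\ref{e:inverseCayley}) gives the inverse assignment $K \mapsto \Xi$ together with the analogous implication $\Gr K_1 \subseteq \Gr K_2 \Rightarrow \Xi_1 \subseteq \Xi_2$; since these two assignments are mutually inverse, inclusion is preserved in both directions. Combining this with the inclusion-preservation of $\Th \mapsto \Th_V$, I obtain the chain $\Th \subseteq \Psi \iff \Th_V \subseteq \Psi_V \iff \Gr\Cay_{\Th_V} \subseteq \Gr\Cay_{\Psi_V} = \Gr K$. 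Putting the two equivalences together proves both directions: if $\Psi$ is a maximal dissipative extension of $\Th$, then $K := \Cay_{\Psi_V}$ is a contraction on $\Hs$ with $\Gr K \supseteq \Gr\Cay_{\Th_V}$ whose inverse Cayley transform is $\Psi_V$; conversely, such a $K$ produces, via its inverse Cayley transform $\Psi_V$ and the $V$-correspondence, a maximal dissipative $\Psi$ containing $\Th$.

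I expect the only genuinely delicate point to be the maximality half of the Cayley dictionary — that maximal dissipativity of $\Psi_V$ corresponds to $K$ being defined on all of $\Hs$ rather than merely contractive on a subspace — but this is precisely what statements~(vi)--(vii) already supply (ultimately the theorem of Phillips and Kochubei cited in \cite{P59,K75,K77}), so the proof reduces to the two routine order-preservation verifications above.
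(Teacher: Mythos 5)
Your argument for part (viii) is correct and is essentially the paper's own proof of that part: the paper likewise observes that, by (\ref{e:f=hj-hj}), two dissipative relations $\Xi_1,\Xi_2$ in $\Hs$ satisfy $\Xi_1\subseteq\Xi_2$ exactly when $\Gr\Cay_{\Xi_1}\subseteq\Gr\Cay_{\Xi_2}$, and then combines this order isomorphism with (ii) and (vii). Your verification that $\Th\mapsto\Th_V$ and the Cayley correspondence both preserve inclusion in both directions is sound and matches what the paper does.

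The genuine gap is that this is a proof of only one of the eight assertions of the lemma. Your proposal explicitly takes (i), (ii), (vi) and (vii) as inputs ("by statement (ii) this bijection transfers dissipativity, and by statements (vi)--(vii) it transfers maximal dissipativity together with the Cayley description"), so read as a proof of the lemma it is circular; moreover parts (iii), (iv), (v) and the second half of (vii) (that $\Th=\Th^\#$ exactly when $K$ is unitary) are never addressed at all. The content you are assuming is precisely where the lemma's work lies: (a) the identities $\< h_{-,+}|h_{+,-}\>_\Hs=\< h_{-,+}|(V^{-1})^\# V^\# h_{+,-}\>_\Hs$ and $(V^{-1}h_{-,+}\,|\,V^\# h_{+,-})_\Hs=\< h_{-,+}|h_{+,-}\>_\Hs$, which the paper uses to obtain (i)--(ii) and then, via Definition \ref{d:Th} and Remark \ref{r:AThvsTh}, statements (iii)--(v); and (b) the Kochubei--Phillips parametrization of maximal dissipative relations in $\Hs$ by contractions on $\Hs$, which gives (vi.b)$\Leftrightarrow$(vi.c), together with the short direct comparison of $\Th_V$ with the inverse Cayley transform of $K$ (using (\ref{e:f=hj-hj}) and $\dom K=\Hs$) that yields $K=\Cay_{\Th_V}$ in (vii). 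None of this is difficult, but a complete proof must supply it; as written, your proposal establishes (viii) only conditionally on the remainder of the lemma.
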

\begin{proof}
The equalities
$
\< h_{-,+}| h_{+,-}\>_\Hs  
= \< h_{-,+}| (V^{-1})^\# V^\# h_{+,-}\>_\Hs 
$
and $(V^{-1} h_{-,+}| V^\# h_{+,-} )_\Hs = \< h_{-,+}| h_{+,-}\>_\Hs $ are valid for all $h_{\mp,\pm} \in \Hs_{\mp,\pm}$. They imply statements (i) and (ii).
Statements (iii) and (iv) follow from (i) and (ii) combined with Definition \ref{d:Th} and Remark \ref{r:AThvsTh}. Statement (v) follows from (ii), (iv), and the fact that $\Th_V$ is symmetric if and only if $\Th_V \subseteq (\Th_V)^*$. 
The equivalence (vi.a)$\Leftrightarrow$(vi.b) follows from (iii). The equivalence (vi.b)$\Leftrightarrow$(vi.c) is the first part of \cite[Theorem 1]{K75} (the proof of \cite[Theorem 1]{K75} can be found in \cite{DM17}).

(vii) Let (vi.a)-(vi.c) be true. Then $K$ is a contraction on $\Hs$ (in particular, $\dom K = \Hs$). By (\ref{e:inverseCayley}), the inverse Cayley transform $\Xi$ of $K$ has the properties that $\Xi$ is dissipative and that $(K+I_\Hs) h = - \ii (K-I_\Hs) h'$ for all $\{h,h'\} \in \Xi$. This and (vi.c) implies $\Th_V \supseteq \Xi$. However,
(\ref{e:f=hj-hj}) and $\dom K = \Hs$ imply that $\Xi$ is maximal dissipative. Thus, $\Xi = \Th_V$ and 
$K=\Cay_{\Th_V}$.

(viii) It follows from (\ref{e:f=hj-hj}) that two dissipative linear relations $\Xi_1$ and $\Xi_2$ in $\Hs$
satisfy $\Xi_1 \subset \Xi_2$ if and only if $\Gr \Cay_{\Xi_1} \subset \Gr \Cay_{\Xi_2}$. This, (ii), and (vii) imply (viii).
\end{proof}

\begin{cor} \label{c:Cayley} 
Let $\A_\Psi$ be a dissipative extension of $\A$. Then $\A_\Th$ is an m-dissipative extension of $\A_\Psi$ if and only if $\Th$ consists of all $\{h_{-,+},h_{+,-}\} \in \Hs_{-,+} \oplus \Hs_{+,-}$ satisfying (\ref{e:abcK0}) with a certain contraction $K$ on $\Hs$ such that 
$\Gr K \supseteq \Gr \Cay_{\Psi_V}$.
\end{cor}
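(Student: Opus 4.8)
The plan is to reduce the statement entirely to Lemma \ref{l:OpExt}(viii) by passing back and forth between admissible extensions of $\A$ and the linear relations parametrizing them, as provided by Corollary \ref{c:MBTm-D} and Remark \ref{r:AThvsTh}. No new analytic input is needed; the only point demanding care is that Lemma \ref{l:OpExt}(viii) names the dissipative relation $\Th$ and its maximal dissipative extension $\Psi$, whereas in the corollary the dissipative object is $\A_\Psi$ (relation $\Psi$) and the m-dissipative extension is $\A_\Th$ (relation $\Th$), so these two symbols play interchanged roles.

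First I would fix the homeomorphism $V:\Hs \to \Hs_{-,+}$ from Lemma \ref{l:OpExt} and record that, since $\A_\Psi$ is a dissipative extension of $\A$, the relation $\Psi$ from $\Hs_{-,+}$ to $\Hs_{+,-}$ is dissipative by Remark \ref{r:OpRel}(ii); this is exactly the standing hypothesis of Lemma \ref{l:OpExt}(viii). Next I would translate the operator condition into a relation condition. By Remark \ref{r:AThvsTh}, $\Gr \A_\Psi \subseteq \Gr \A_\Th$ holds precisely when $\Psi \subseteq \Th$, so $\A_\Th$ extends $\A_\Psi$ if and only if $\Th$ extends $\Psi$; and by Corollary \ref{c:MBTm-D}(ii), $\A_\Th$ is m-dissipative if and only if $\Th$ is a maximal dissipative linear relation from $\Hs_{-,+}$ to $\Hs_{+,-}$. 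Combining the two, the statement ``$\A_\Th$ is an m-dissipative extension of $\A_\Psi$'' is equivalent to ``$\Th$ is a maximal dissipative extension of the dissipative relation $\Psi$''.

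Finally I would apply Lemma \ref{l:OpExt}(viii) with its $\Th$ and $\Psi$ read as our $\Psi$ and $\Th$ respectively: $\Th$ is a maximal dissipative extension of $\Psi$ if and only if $\Th_V$ is an inverse Cayley transform of a contraction $K$ on $\Hs$ with $\Gr K \supseteq \Gr \Cay_{\Psi_V}$. It then remains to rewrite ``$\Th_V$ is an inverse Cayley transform of $K$'' as the explicit boundary relation (\ref{e:abcK0}). For this I would use (\ref{e:inverseCayley}), which identifies the inverse Cayley transform of the contraction $K$ with the dissipative relation whose Cayley transform is $K$, so $\Th_V$ being this inverse Cayley transform means $\Cay_{\Th_V}=K$ and $\Th_V$ is maximal dissipative; then parts (vi) and (vii) of Lemma \ref{l:OpExt}, applied to the maximal dissipative $\Th$ with $K=\Cay_{\Th_V}$, show that $\Th$ consists of exactly those $\{h_{-,+},h_{+,-}\} \in \Hs_{-,+} \oplus \Hs_{+,-}$ satisfying (\ref{e:abcK0}). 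Substituting this description of $\Th$ into the equivalence above gives the corollary verbatim. The expected main obstacle is purely organizational, namely keeping the interchanged roles of $\Th$ and $\Psi$ straight throughout and matching ``inverse Cayley transform'' against the explicit form (\ref{e:abcK0}); no estimate or construction beyond the cited results is involved.
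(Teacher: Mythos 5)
Your argument is correct and matches the paper's own proof, which likewise derives the corollary from Remark \ref{r:AThvsTh}, Corollary \ref{c:MBTm-D}, and Lemma \ref{l:OpExt} (vi)--(viii). Your explicit handling of the interchanged roles of $\Th$ and $\Psi$ relative to Lemma \ref{l:OpExt}(viii) and the unpacking of the inverse Cayley transform via (\ref{e:inverseCayley}) simply spell out what the paper leaves implicit.
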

\begin{proof} The statement follows from Remark \ref{r:AThvsTh}, Corollary \ref{c:MBTm-D}, and Lemma \ref{l:OpExt} (vi)-(viii).
\end{proof}

\begin{rem} \label{r:m-D2}
An operator $T:\dom T \subseteq \Hs \to \Hs$ is m-dissipative (m-accretive) if and only if it is closed and maximal dissipative (resp., maximal accretive) \cite{P59,Kato13}. 
\end{rem}

\begin{rem} \label{r:m-D3} 
A densely defined operator $T:\dom T \subseteq \Hs_{-,+} \to \Hs_{+,-}$ is maximal dissipative (maximal accretive) if only if $\Gr T$ is so. This follows from Lemma \ref{l:OpExt} (ii)-(iii) and Remark \ref{r:m-D} applied to $V^\# T V$.
\end{rem}

\subsection{M-dissipative boundary conditions for abstract Maxwell operators} \label{s:ThGen}

Let $\Hs_{0}$, $\Hs_{1}$, and $\Hs$ be Hilbert spaces.
Operators $T_j:\dom T_j \subseteq \Hs_j \to \Hs$, $j=0,1$, and the condition  
\begin{gather} \label{e:Trel}
\text{ $T_0 f_0 +  T_1 f_1 = 0$}
\end{gather}
define a linear relation $\ker (T_0 \ T_1)$ from $\Hs_0$ to $\Hs_1$ consisting of all $\{ f_0,f_1\} \in \Hs_0 \oplus \Hs_1$ such that 
$f_j \in \dom T_j$, $j=0,1$, and (\ref{e:Trel}) is fulfilled.
Assume that $\A$ is a symmetric operator satisfying the settings of Section \ref{s:MBT-BT}.  Putting $\Hs_0 = \Hs_{-,+}$ and $\Hs_1 = \Hs_{+,-}$, we see that a linear relation $\Th=\ker (T_0 \ T_1)$ defined by (\ref{e:Trel}) is associated with the admissible extension $\A_\Th$ of $\A$. Using   (\ref{e:ATh}), one can rewrite the domain of $\A_\Th$ as
the set of all $f \in \dom \A^*$ satisfying the  condition
$T_0 \Ga_0 f +  T_1 \Ga_1 f = 0 .$
This motivates the following definition: 
\begin{multline} \label{e:Athbc}
\text{the operator $\A_\Th$ with $\Th = \ker  (T_0 \ T_1)$ is called} \\ \text{the restriction of $\A^*$ defined by  the condition $T_0 \Ga_0 f +  T_1 \Ga_1 f = 0$.}
\end{multline}
This gives an abstract analogue of boundary conditions (cf. \cite{K75} and \cite[Chapter 6]{DM17}).

Let a closed densely defined symmetric  operator $A$ in a Hilbert space $\Hc$ and a symmetric operator 
$
M \bigl(\begin{smallmatrix} \psi_1 \\ \psi_2 \end{smallmatrix} \bigr)
=   
S^{-1} 
\bigl(\begin{smallmatrix}
 0 & \ii A \\  -\ii A & 0 
\end{smallmatrix} \bigr) \bigl(\begin{smallmatrix} \psi_1 \\ \psi_2 \end{smallmatrix} \bigr)
$
in the space $\Hc^{2,S}$ be connected by (\ref{e:M}).
Let $(\Hs_{-,+},\Hs, \Ga_0 ,\Ga_1)$
be an m-boundary tuple for $M^*$ (e.g., the m-boundary tuple of  Proposition \ref{p:M*MBT}).
Let $V$ be a certain fixed linear homeomorphism from $\Hs$ onto $\Hs_{-,+}$.


\begin{thm} \label{t:absM-dis} 
(i)  An extension $\wh M$ of $M$ is an m-dissipative operator  if and only if it is a restriction of $M^*$ defined by  an `abstract boundary condition' of the form
\begin{gather} \label{e:K+IGa}
(K+I_\Hs) V^{-1} \Ga_0 \psi_2 + \ii (K-I_\Hs) V^\#  \Ga_1 \psi_1 = 0 ,
\end{gather}
where $K:\Hs  \to \Hs$ is a certain contraction on $\Hs$.

\item[(ii)] Statement (i) establishes one-to-one correspondence between 
contractions $K$ on $\Hs$ and m-dissipative extensions $\wh M$ of $M$.
Besides, $\wh M$ is selfadjoint if and only if $K$ is unitary.
\end{thm}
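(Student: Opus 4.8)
The plan is to read Theorem \ref{t:absM-dis} as a direct assembly of three already-established facts applied to the symmetric operator $\A = M$: the bijection of Section \ref{s:MBT-BT} between admissible extensions and linear relations, the m-dissipativity criterion of Corollary \ref{c:MBTm-D}, and the parametrization of maximal dissipative relations by contractions in Lemma \ref{l:OpExt}. First I would recall that, since $(\Hs_{-,+},\Hs,\Ga_0,\Ga_1)$ is an m-boundary tuple for $M^*$, the map $\Ga:\psi\mapsto\{\Ga_0\psi,\Ga_1\psi\}$ sets up the one-to-one correspondence of Section \ref{s:MBT-BT} between admissible extensions $\wh M$ of $M$ and linear relations $\Th$ from $\Hs_{-,+}$ to $\Hs_{+,-}$, via $\wh M = M_\Th$ and the formula (\ref{e:ATh}). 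The crucial bookkeeping step is to observe that the abstract boundary condition (\ref{e:K+IGa}) is nothing but the defining condition (\ref{e:abcK0}) of the relation in Lemma \ref{l:OpExt}(vi.c): taking the everywhere-defined bounded operators $T_0:=(K+I_\Hs)V^{-1}$ and $T_1:=\ii(K-I_\Hs)V^\#$, the relation $\Th=\ker(T_0\ T_1)$ in the sense of (\ref{e:Athbc}) consists precisely of the pairs $\{h_{-,+},h_{+,-}\}$ obeying (\ref{e:abcK0}), read off the boundary data $h_{-,+}=\Ga_0\psi$, $h_{+,-}=\Ga_1\psi$.

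For part (i), the forward direction runs as follows. If $\wh M$ is m-dissipative, it is in particular a dissipative extension of $M$, so Corollary \ref{c:MBTm-D}(i) gives $\wh M=M_\Th$ for a dissipative $\Th$, and Corollary \ref{c:MBTm-D}(ii) forces $\Th$ to be maximal dissipative. The implication (vi.a)$\Rightarrow$(vi.c) of Lemma \ref{l:OpExt} then yields a contraction $K$ on $\Hs$ for which $\Th$, hence $\dom\wh M$, is described by (\ref{e:K+IGa}). For the converse, starting from a contraction $K$ and the condition (\ref{e:K+IGa}), the implication (vi.c)$\Rightarrow$(vi.a) makes $\Th$ maximal dissipative, and Corollary \ref{c:MBTm-D}(ii) makes $\wh M=M_\Th$ m-dissipative.

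For part (ii), I would show that $K\mapsto\wh M$ is a bijection onto the m-dissipative extensions. Well-definedness and surjectivity are exactly the two implications just proved. Injectivity follows from Lemma \ref{l:OpExt}(vii): the extension/relation correspondence recovers $\Th$ from $\wh M=M_\Th$, and $\Th$ in turn determines $K=\Cay_{\Th_V}$ uniquely, so distinct contractions cannot produce the same extension. Finally, within this family $\wh M=M_\Th$ is selfadjoint iff $M_\Th=(M_\Th)^*=M_{\Th^\#}$, i.e. iff $\Th=\Th^\#$ by Remark \ref{r:OpRel}(i) (together with $\overline{(\A_\Th)^*=\A_{\Th^\#}}$ from Remark \ref{r:AThvsTh}); and by Lemma \ref{l:OpExt}(vii) this occurs exactly when $K$ is unitary.

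The argument is essentially mechanical, since all the analytic content—closedness and maximality of the Cayley and inverse-Cayley transforms, the $\#$-adjoint bookkeeping, and the contractivity estimates—already sits in Lemma \ref{l:OpExt} and Corollary \ref{c:MBTm-D}. Consequently the only place demanding genuine care, and the one point where the correspondence could fail to be clean, is the identification of the formal boundary condition (\ref{e:K+IGa}) with the relation-theoretic condition (\ref{e:abcK0}) and the uniqueness of $K$ via $K=\Cay_{\Th_V}$, which is precisely what upgrades the parametrization from merely surjective to genuinely one-to-one.
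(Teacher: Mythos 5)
Your proposal is correct and follows essentially the same route as the paper's proof: Corollary \ref{c:MBTm-D} to reduce m-dissipativity of $\wh M=M_\Th$ to maximal dissipativity of $\Th$, Lemma \ref{l:OpExt}(vi)--(vii) to parametrize such $\Th$ bijectively by contractions $K$ via (\ref{e:abcK0}) and to characterize the selfadjoint case by unitarity of $K$, and the identification of (\ref{e:K+IGa}) with (\ref{e:abcK0}) through (\ref{e:ATh}) and (\ref{e:Athbc}). Your write-up merely spells out the injectivity and the two directions more explicitly than the paper does.
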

\begin{proof}
By Corollary \ref{c:MBTm-D}, $\wh M$ is m-dissipative if and only if $\wh M = M_\Th$ for a certain 
maximal dissipative linear relation $\Th$ from $\Hs_{-,+}$ to $\Hs_{+,-}$. By Lemma \ref{l:OpExt} (vi), such linear relations $\Th$ have the form (\ref{e:abcK0}) with a certain contraction $K$ on $\Hs$.
Moreover, this establishes 1-to-1 correspondence between the family of maximal dissipative relations $\Th$ and 
the family of contractions $K$ on $\Hs$. 
Combining  (\ref{e:abcK0}), (\ref{e:ATh}), and  (\ref{e:Athbc}), we see that the extension $\wh M$ is defined by (\ref{e:K+IGa}). The correspondence between the case $\wh M = \wh M^*$ and unitary operators $K$ follows  from Remark \ref{r:OpRel} (i) and Lemma \ref{l:OpExt} (vii) .
\end{proof}

\section{Abstract impedance-type boundary conditions}
\label{s:AbstractITC}


For a linear relation $\Phi$ from $\Hs_1$ to $\Hs_2$, let us define 
$\big(\begin{smallmatrix}
   1 & 0 \\
0 & \mp \ii  
\end{smallmatrix}\big) \Phi := \{ \{h_1, \mp \ii h_2 \} \ : \ \{ h_1,h_2\} \in \Phi \}. $
The domain of $\Phi$ is defined by $\dom \Phi := \left\{h_1 \in \Hs_1 \ : \ \{h_1,h_2\} \in \Phi \right\} $, the inverse linear relation $\Phi^{-1}$ from $\Hs_2$ to $\Hs_1$ by 
$\Phi^{-1}:= \left\{ \{h_2,h_1\}  \ : \ \{h_1,h_2\} \in \Phi \right\} $. Let $\Phi (h_1) := \left\{ h_2 \ : \ \{h_1,h_2\} \in \Phi \right\}$. So  $\Phi (0) := \left\{ h_2 \ : \ \{0,h_2\} \in \Phi \right\}$ is the multivalued part of $\Phi$.

Using the abstract settings of Section \ref{s:mrt}, we consider 
a closed densely defined symmetric operator $A$ in a Hilbert space $\Hc$ and 
an associated symmetric  abstract Maxwell operator $M$ defined in the `weighted' Hilbert space $\Hc^{2,S}$ by 
$
 M  \psi 
:=   
S^{-1} 
\left(\begin{smallmatrix}
 0 & \ii A \\  -\ii A & 0 
\end{smallmatrix}\right) \bigl(\begin{smallmatrix} \psi_1 \\ \psi_2 \end{smallmatrix}\bigr).
$

\subsection{Boundary conditions with impedance-type operators} \label{s:I-T}

Let $(\Hs_{-,+},\Hs,  \Ga_0 , \Ga_1)$ be the m-boundary tuple for $M^*$ associated by Proposition \ref{p:M*MBT} with a reduction tuple $(\Hs_{-,+},\Hs,\Gb, W)$ for $A^*$ (recall that $\Ga_0 \psi := \Gb \psi_2 $ and $\Ga_1 \psi := \ii W  \Gb \psi_1$). 

Consider an abstract boundary condition of the form 
\begin{gather} \label{e:I-TC}
\ii Z \Ga_0 \psi + \Ga_1 \psi = 0 
\end{gather}
with an accretive operator $Z:\dom Z \subseteq \Hs_{-,+} \to \Hs_{+,-}$. Recall that, by definition, $Z$ is accretive if $\re \<Z h|h\>_\Hs \ge 0$, $h \in \Hs_{-,+}$ 
(we do not assume that the operator $Z$ is closable or densely defined in $\Hs_{-,+}$). According to (\ref{e:ATh}), the operator $M_{\Gr (-\ii Z)}$ is a restriction of $M^*$ defined by condition (\ref{e:I-TC}). 

\begin{defn} \label{d:ImpOp}
 Accretive operators $Z:\dom Z \subseteq \Hs_{-,+} \to \Hs_{+,-}$ (accretive linear relations $\Phi$ from $\Hs_{-,+}$ to $\Hs_{+,-}$) will be called \emph{impedance-type operators (resp., impedance-type relations)} associated with the m-boundary tuple $(\Hs_{-,+},\Hs,  \Ga_0 , \Ga_1)$. 
\end{defn}

 If $Z$ is an impedance-type operator, we say that condition (\ref{e:I-TC}) and the restriction $M_{\Gr (-\ii Z)}$ of $M^*$ are \emph{generated by the impedance-type operator} $Z$. 
If $\Phi$ is an impedance-type relation, we say that 
\emph{the restriction $M_{\left(\begin{smallmatrix}
   1 & 0 \\
0 & - \ii  
\end{smallmatrix}\right) \Phi}$ of $M^*$ is generated by the impedance-type relation $\Phi$}.

The class of conditions (\ref{e:I-TC}) generated by impedance-type operators  includes the class of the generalized impedance boundary conditions of  \cite[Sections 1.6.1]{ACL17} and the class of Leontovich-type boundary conditions, see Sections \ref{s:Mresults} and \ref{s:mdM}.

In this subsection, we assume that $\wt M=M_{\Gr (-\ii Z)}$ is a restriction of $M^*$ generated by a certain impedance-type operator $Z$ and that $V$ is a certain linear homeomorphism from $\Hs$ to $\Hs_{-,+}$. 
Note that $\Gr Z$ is an impedance-type relation, and that the linear relation $\Gr (-\ii Z) = \left(\begin{smallmatrix}
   1 & 0 \\
0 & - \ii  
\end{smallmatrix}\right) \Gr Z$ is dissipative. So the operator $\wt M$ is a dissipative (but not necessarily m-dissipative). 

\begin{lem} \label{l:I-Ta}
Let $\Phi$ be a impedance-type relation, let $\Phi_V:= \big(\begin{smallmatrix}
   V^{-1} & 0 \\
0 & V^\#  
\end{smallmatrix}\big) \Phi$ (see Lemma \ref{l:OpExt}),
and $\Th := \big(\begin{smallmatrix}
   1 & 0 \\
0 & - \ii  
\end{smallmatrix}\big) \Phi$. Then  the following statements are equivalent:
(i) $M_\Th $ is m-dissipative, (ii)~$\Th$ is maximal dissipative,
(iii) $\Phi$ is maximal accretive, (iv) $\Phi_V$ is maximal accretive, (v)~$\Th_V$ is maximal dissipative.
\end{lem}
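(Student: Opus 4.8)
The plan is to reduce the entire chain to the two results already available, Corollary~\ref{c:MBTm-D}(ii) and Lemma~\ref{l:OpExt}, by isolating one elementary observation: multiplying the second component of a linear relation by $-\ii$ interchanges the accretive and dissipative classes while respecting inclusion. Write $\sigma(\Psi):=\bigl(\begin{smallmatrix} 1 & 0 \\ 0 & -\ii \end{smallmatrix}\bigr)\Psi$ for a linear relation $\Psi$ (either from $\Hs_{-,+}$ to $\Hs_{+,-}$, or in $\Hs$); then $\Th=\sigma(\Phi)$ by definition, and $\sigma$ is an inclusion-preserving bijection on linear relations, with inverse the entrywise map $\{h_0,h_1\}\mapsto\{h_0,\ii h_1\}$.

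First I would record the effect of $\sigma$ on numerical cones. Since the $\Hs$-pairing is linear in its first argument, the element $\{h_0,-\ii h_1\}\in\sigma(\Psi)$ contributes $\<-\ii h_1|h_0\>_\Hs=-\ii\<h_1|h_0\>_\Hs$, so that $\Ncone(\sigma(\Psi))=-\ii\,\Ncone(\Psi)$. As $\im(-\ii z)=-\re z$, multiplication by $-\ii$ carries $\ii\overline{\CC}_-=\{z:\re z\ge 0\}$ bijectively onto $\overline{\CC}_-=\{z:\im z\le 0\}$; hence $\Psi$ is accretive if and only if $\sigma(\Psi)$ is dissipative. Being an inclusion-preserving bijection, $\sigma$ then maps the family of accretive relations onto the family of dissipative relations, and therefore sends maximal accretive relations to maximal dissipative relations (and conversely). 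Applied to $\Phi$ this yields at once (iii)$\Leftrightarrow$(ii).

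Next I would assemble the remaining links. The equivalence (i)$\Leftrightarrow$(ii) is exactly Corollary~\ref{c:MBTm-D}(ii) applied to the symmetric operator $M$ and the relation $\Th$, since $M_\Th$ is the restriction of $M^*$ defined through the given m-boundary tuple via (\ref{e:ATh}). The equivalence (ii)$\Leftrightarrow$(v) is the part (vi.a)$\Leftrightarrow$(vi.b) of Lemma~\ref{l:OpExt}. For (iv)$\Leftrightarrow$(v) I would observe that the scalar $-\ii$ commutes with the linear maps $V^{-1}$ and $V^\#$, so that $\Th_V=\bigl(\begin{smallmatrix} V^{-1} & 0 \\ 0 & V^\# \end{smallmatrix}\bigr)\sigma(\Phi)=\sigma\bigl(\bigl(\begin{smallmatrix} V^{-1} & 0 \\ 0 & V^\# \end{smallmatrix}\bigr)\Phi\bigr)=\sigma(\Phi_V)$; applying the bijection $\sigma$ now in $\Hs$ to $\Phi_V$ gives $\Phi_V$ maximal accretive if and only if $\Th_V=\sigma(\Phi_V)$ maximal dissipative. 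Chaining (i)$\Leftrightarrow$(ii)$\Leftrightarrow$(iii) with (ii)$\Leftrightarrow$(v)$\Leftrightarrow$(iv) closes all five equivalences.

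The computations are routine; the only points demanding care are the sign bookkeeping that makes $\sigma$ turn accretivity into dissipativity (which rests on the physics convention placing dissipative numerical ranges in $\overline{\CC}_-$) and the verification that the scalar factor passes through $V^\#$, so that $\Th_V$ and $\sigma(\Phi_V)$ literally coincide rather than merely up to equivalence. Once these are settled no genuine analytic obstacle remains, since maximality is transported purely formally through the inclusion-preserving bijections and the cited results.
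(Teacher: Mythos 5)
Your proof is correct and follows essentially the same route as the paper, which simply chains Corollary~\ref{c:MBTm-D} with Lemma~\ref{l:OpExt} through the cycle (i)$\Leftrightarrow$(ii)$\Rightarrow$(iii)$\Rightarrow$(iv)$\Rightarrow$(v)$\Rightarrow$(ii); your only difference is that you prove pairwise biconditionals and make explicit the bookkeeping (the identity $\Ncone(\sigma(\Psi))=-\ii\,\Ncone(\Psi)$, the commutation of the scalar $-\ii$ with $V^{-1}$ and $V^\#$, and the transport of maximality through the inclusion-preserving bijection $\sigma$) that the paper leaves implicit in its one-line proof.
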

\begin{proof} Lemma \ref{l:OpExt} (ii) and Corollary \ref{c:MBTm-D} give  (i) 
$\Leftrightarrow$ (ii) $\Rightarrow$ (iii) $\Rightarrow$ (iv) $\Rightarrow$ (v) $\Rightarrow$ (ii).
\end{proof}

\begin{rem} \label{r:I-T}
If, in the settings of Lemma \ref{l:I-Ta}, $\Phi$ is additionally nonnegative, then each of the statements (i)-(v) is equivalent to each of the following statements:
(vi) $\Phi_V = (\Phi_V)^*$, (vii) $\Phi=\Phi^\#$.

Indeed, (vi) $\Leftrightarrow$ (vii) by Lemma \ref{l:OpExt} (ii). The equivalence (iv) $\Leftrightarrow$ 
(vi) for nonnegative $\Phi_V$ is well known and can be obtained from the equivalence (iv) $\Leftrightarrow$ (v) and the consideration of the domain of the Cayley transform of $\big(\begin{smallmatrix}
   1 & 0 \\
0 & - \ii  
\end{smallmatrix}\big) \Phi_V = \Th_V$. Note that $\Th_V$ is maximal dissipative if and only if $\dom \Cay_{\Th_V} = \Hs$ (see e.g. Lemma \ref{l:OpExt} (vi)-(vii)).
\end{rem}

\begin{cor} \label{c:I-T}
Consider an arbitrary operator $Z:\dom Z \subseteq \Hs_{-,+} \to \Hs_{+,-}$.  Then the following statements are equivalent: 
(i) $\wt M = M_{\Gr (-\ii Z)}$ is m-dissipative,
(ii) $Z$ is maximal accretive and closed, (iii) $V^\# Z V$ is m-accretive in $\Hs$.
\end{cor}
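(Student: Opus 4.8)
The plan is to set $\Phi := \Gr Z$ and $\Th := \big(\begin{smallmatrix} 1 & 0 \\ 0 & -\ii \end{smallmatrix}\big)\Phi = \Gr(-\ii Z)$, so that $\wt M = M_\Th$, and then to run the cyclic implications (i)$\Rightarrow$(ii)$\Rightarrow$(iii)$\Rightarrow$(i), passing between the operator and the linear-relation pictures and between $\Hs_{\mp,\pm}$ and the pivot space $\Hs$ through $V$. First I would record three bookkeeping facts: (a) the map $\big(\begin{smallmatrix} 1 & 0 \\ 0 & -\ii \end{smallmatrix}\big)$ is an inclusion-preserving bijection carrying dissipative relations from $\Hs_{-,+}$ to $\Hs_{+,-}$ onto accretive ones (and hence maximal dissipative onto maximal accretive), which is immediate from $\Ncone(\Th) = -\ii\,\Ncone(\Phi)$; (b) $\Phi_V = \Gr(V^\# Z V)$ and $\Th_V = \Gr(-\ii V^\# Z V)$; and (c) the correspondence $Z \mapsto V^\# Z V$ is an order-isomorphism between operators $\Hs_{-,+}\to\Hs_{+,-}$ and operators $\Hs\to\Hs$ that preserves accretivity (via (\ref{e:VV++})) and closedness (Lemma \ref{l:OpExt}(iii)), hence maps maximal accretive operators to maximal accretive operators.

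For (i)$\Rightarrow$(ii): m-dissipativity of $\wt M = M_\Th$ gives, by Corollary \ref{c:MBTm-D}(ii), that $\Th$ is a maximal dissipative relation, so by (a) $\Phi = \Gr Z$ is a maximal accretive relation; such a relation is closed (Remark \ref{r:m-D}), whence $Z$ is closed, and any proper accretive \emph{operator} extension of $Z$ would yield a proper accretive relation extension of $\Phi$, so $Z$ is maximal accretive, which is (ii). For (ii)$\Rightarrow$(iii) I stay at the operator level: by (c), $V^\# Z V$ inherits maximal accretivity and closedness from $Z$, and then Remark \ref{r:m-D2} (m-accretive $=$ closed $+$ maximal accretive) gives that $V^\# Z V$ is m-accretive, i.e. (iii). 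For (iii)$\Rightarrow$(i): $V^\# Z V$ m-accretive means $-\ii V^\# Z V$ is m-dissipative, hence densely defined (m-dissipative operators are densely defined, being generators of contraction semigroups up to the factor $-\ii$, \cite{P59}) and maximal dissipative as an operator; by Remark \ref{r:m-D} its graph $\Th_V$ is then a maximal dissipative relation, Lemma \ref{l:OpExt}(vi) transfers maximality to $\Th$, and Corollary \ref{c:MBTm-D}(ii) concludes that $\wt M = M_\Th$ is m-dissipative.

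The step I expect to be the crux is the interface between ``maximal accretive as an operator'' (the content of (ii)) and ``maximal accretive as a linear relation'' (what Corollary \ref{c:MBTm-D} and Lemma \ref{l:OpExt} speak about): the passage from operator-maximality to relation-maximality can genuinely fail when the operator is not densely defined (Remark \ref{r:m-D}), so the argument must never use it. The layout above is arranged exactly to avoid this — relation-maximality is only ever \emph{deduced}, never \emph{assumed}, and the sole appeal to dense definedness occurs in (iii)$\Rightarrow$(i), where it is free because m-dissipative operators are densely defined. A minor point to verify along the way is that each of (i)--(iii) already forces $Z$ to be accretive (so $\Phi$ is an honest impedance-type relation and the cited results apply), which also shows that if $Z$ fails to be accretive all three statements are simultaneously false and the equivalence holds vacuously.
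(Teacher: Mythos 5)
Your proof is correct and follows essentially the same route as the paper, whose own proof is just the one-line citation of Corollary \ref{c:MBTm-D}, Lemma \ref{l:I-Ta}, and Remark \ref{r:m-D2}; you unpack exactly that chain (m-dissipativity of $M_\Th$ $\leftrightarrow$ maximal dissipativity of $\Th$ $\leftrightarrow$ maximal accretivity of $\Phi$, $\Phi_V$ via Lemma \ref{l:OpExt}, then the operator-level conversion via Remark \ref{r:m-D2}). Your explicit handling of the operator-versus-relation maximality interface and of non-accretive $Z$ (vacuous equivalence) is exactly the care the paper leaves implicit.
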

\begin{proof}
The corollary follows from Corollary \ref{c:MBTm-D}, Lemma \ref{l:I-Ta}, and Remark \ref{r:m-D2}.
\end{proof}
\begin{rem} \label{r:essMDI-T}
For an arbitrary operator $Z:\dom Z \subseteq \Hs_{-,+} \to \Hs_{+,-}$, the following statements are equivalent: 
(i) the operator $M_{\Gr (-\ii Z)}$ is essentially m-dissipative,
(ii) the closure $\overline{\Gr Z}$ is a maximal accretive linear relation from $\Hs_{-,+}$ to $\Hs_{+,-}$,
(iii) $\overline{\Gr (V^\# Z V)}$ is a maximal accretive linear relation in $\Hs$. Indeed, let $\Th = \Gr (-\ii Z)$. Then, by Remark \ref{r:AThvsTh} and Corollary \ref{c:MBTm-D}, $ M_{\Gr (-\ii Z)}$ is essentially m-dissipative if and only if the closure $\overline{\Th}$ is maximal dissipative. Lemma \ref{l:I-Ta} completes the proof of the desired equivalences.
\end{rem}

If $\overline{\Gr Z} \neq  (\overline{\Gr Z})^\# $ or if it is difficult to check the condition 
$\overline{\Gr Z} =  (\overline{\Gr Z})^\# $ for a particular example, one can construct m-dissipative extensions of $M_{\Gr (-\ii Z)}$.


\begin{thm} \label{t:I-Text}
For an impedance-type operator $Z$, the following statements are equivalent:
\item[(i)] An operator $\wh M$ is an m-dissipative extension 
of $\wt M = M_{\Gr (-\ii Z)}$.
\item[(ii)] $\wh M = M_\Th$, where $\Th = \left(\begin{smallmatrix}
   1 & 0 \\
0 & - \ii  
\end{smallmatrix}\right) \Phi$ for a certain maximal accretive extension $\Phi$ of $\Gr Z$.
\item[(iii)]  $\wh M$ is the restriction of $M^*$ defined by a condition  
$(I_{\Hs}+K) V^{-1} \Ga_0 \psi  - \ii (I_{\Hs}-K) V^\#  \Ga_1 \psi = 0 $
with a certain contraction $K$ on $\Hs$ such that $K$ is an extension of the Cayley transform $\Cay_{\Gr (-\ii V^\#ZV)}$ associated with the dissipative operator $(-\ii) V^\#ZV$.
\end{thm}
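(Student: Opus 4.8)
The plan is to reduce everything to the abstract extension machinery already established, by reading off the dissipative linear relation attached to $\wt M$ and tracking it through the Cayley transform. Write $\Psi_0 := \Gr(-\ii Z) = \bigl(\begin{smallmatrix} 1 & 0 \\ 0 & -\ii \end{smallmatrix}\bigr)\Gr Z$. Since $Z$ is an impedance-type operator, $\Gr Z$ is accretive and $\Psi_0$ is dissipative, so $\wt M = M_{\Psi_0}$ is a dissipative (hence admissible) extension of $M$, and Corollary \ref{c:Cayley} applies with $\A = M$ and $\Psi = \Psi_0$.

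First I would establish the equivalence (i)$\Leftrightarrow$(iii). The only computation needed is the identification of the transported relation $(\Psi_0)_V = \bigl(\begin{smallmatrix} V^{-1} & 0 \\ 0 & V^\# \end{smallmatrix}\bigr)\Psi_0$: substituting $g = V^{-1} h$ into a generic element $\{h, -\ii Z h\}$ of $\Psi_0$ yields $\{g, -\ii V^\# Z V g\}$, so $(\Psi_0)_V = \Gr(-\ii V^\# Z V)$ and consequently $\Cay_{(\Psi_0)_V} = \Cay_{\Gr(-\ii V^\# Z V)}$. Corollary \ref{c:Cayley} then states that $\wh M$ is an m-dissipative extension of $\wt M$ exactly when $\wh M = M_\Th$ with $\Th$ the set of all $\{h_{-,+}, h_{+,-}\}$ satisfying (\ref{e:abcK0}) for a contraction $K$ on $\Hs$ with $\Gr K \supseteq \Gr \Cay_{\Gr(-\ii V^\# Z V)}$ (that is, $K$ extends $\Cay_{\Gr(-\ii V^\# Z V)}$). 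Setting $h_{-,+} = \Ga_0\psi$, $h_{+,-} = \Ga_1\psi$, using $\ii(K - I_\Hs) = -\ii(I_\Hs - K)$, and invoking the definition (\ref{e:Athbc}) of the restriction of $M^*$ defined by a boundary condition turns (\ref{e:abcK0}) into precisely the condition displayed in (iii).

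Next I would prove (i)$\Leftrightarrow$(ii) by a bijection argument at the level of linear relations. By Corollary \ref{c:MBTm-D} the m-dissipative extensions of $M$ are exactly the $M_\Th$ with $\Th$ maximal dissipative, and by Remark \ref{r:AThvsTh} such an $M_\Th$ extends $\wt M = M_{\Psi_0}$ iff $\Psi_0 \subseteq \Th$. The map $B:\Phi \mapsto \bigl(\begin{smallmatrix} 1 & 0 \\ 0 & -\ii \end{smallmatrix}\bigr)\Phi$ is an invertible linear map of $\Hs_{-,+}\oplus\Hs_{+,-}$, hence an inclusion-preserving bijection of linear relations; by Definition \ref{d:Th}(i)--(ii) it multiplies the numerical cone by $-\ii$ and therefore carries accretive relations onto dissipative ones, and hence maximal accretive relations onto maximal dissipative ones. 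Since $B(\Gr Z) = \Psi_0$, the maximal dissipative $\Th \supseteq \Psi_0$ are exactly the $\Th = B\Phi = \bigl(\begin{smallmatrix} 1 & 0 \\ 0 & -\ii \end{smallmatrix}\bigr)\Phi$ with $\Phi$ maximal accretive and $\Gr Z \subseteq \Phi$, which is statement (ii).

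I expect the only genuinely delicate points to be the two bookkeeping identifications: the transport formula $(\Psi_0)_V = \Gr(-\ii V^\# Z V)$ together with the resulting equality of Cayley transforms, and the sign/factor accounting that converts (\ref{e:abcK0}) into the boundary condition of (iii). Everything else is a direct specialization of Corollary \ref{c:Cayley}, Corollary \ref{c:MBTm-D}, and Lemma \ref{l:OpExt}, so no new analytic input is required beyond verifying that $B$ preserves the relevant classes and their maximality.
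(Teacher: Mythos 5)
Your proposal is correct and rests on the same machinery as the paper's proof: the paper establishes (i)$\Leftrightarrow$(ii) from Corollary \ref{c:MBTm-D} and Lemma \ref{l:I-Ta} and (ii)$\Leftrightarrow$(iii) from Lemmata \ref{l:I-Ta} and \ref{l:OpExt}~(viii), while you chain the same ingredients in a slightly different order, proving (i)$\Leftrightarrow$(iii) directly via Corollary \ref{c:Cayley} (which is itself just these lemmas repackaged) and re-deriving the relevant part of Lemma \ref{l:I-Ta} for (i)$\Leftrightarrow$(ii). The two bookkeeping identifications you flag --- $(\Psi_0)_V=\Gr(-\ii V^\# Z V)$ and the sign conversion of (\ref{e:abcK0}) --- are verified correctly, so no gap remains.
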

\begin{proof}
The equivalence (i) $\Leftrightarrow$ (ii) follows from Corollary \ref{c:MBTm-D} and Lemma \ref{l:I-Ta}.
The equivalence (ii) $\Leftrightarrow$ (iii) follows from Lemmata \ref{l:I-Ta} and \ref{l:OpExt} (viii).
\end{proof}

According Definition \ref{d:contraction}, it is assumed in Theorem \ref{t:I-Text} (iii) that $\dom K = \Hs$, while the contractive in $\Hs$ operator $ \Cay_{\Gr (-\ii V^\#ZV)}$ is not necessarily defined on the whole $\Hs$. Actually $\dom  \Cay_{\Gr (-\ii V^\#ZV)}= \Hs$ if and only if $M_{\Gr (-\ii Z)}$ is m-dissipative (see Corollary \ref{c:I-T} (iii)).

\subsection{Leontovich-type boundary conditions in abstract settings}
\label{s:GIBCa}

The Leontovich-type boundary condition (\ref{e:GIBC}) is written in  terms of Calkin's triple \linebreak $(\LLt, \pi_{\top,2},  \ -\n_\times)$, see Corollary \ref{c:curlRT}.  To apply the results of Section \ref{s:I-T} to (\ref{e:GIBC}), we  first provide a transition to the settings of associated reduction tuples and m-boundary tuples.  

 Let $(\Hs_{-,+},\Hs, \Gb, W)$ be a unitary reduction tuple for $A^*$ and $(\Hs,\wt \Gb,\wt W)$ be the associated Calkin triple constructed in Corollary  \ref{c:RTtoCT}. Recall that this means that 
$\dom \wt \Gb :=\{f \in \dom A^* \ : \ \Gb f \in \Hs \} $ and $\wt \Gb f = \Gb f$ for all $f \in \dom \wt \Gb
$, 
where $\wt \Gb$ is understood as a densely defined operator from $\dom A^*$ to $\Hs$.
The unitary operator $\wt W:\Hs \to \Hs$ is the closure in $\Hs$ of the restriction $ W \uph_{\wt \Hs_{-,+}}$ (recall that  
$\wt \Hs_{\mp,\pm} = \Hs_{\mp,\pm} \cap \Hs$).

Let us associate with the Calkin triple $(\Hs,\wt \Gb,\wt W)$ the `restricted  boundary maps' 
\[
\text{$ \wt \Ga_0 \psi = \wt \Gb \psi_2 $ and $\wt \Ga_1 \psi = \ii \wt W  \wt \Gb \psi_1$,}
\]
where $\wt \Ga_j$, $j=0,1$, are defined on all 
$\psi = \{\psi_1,\psi_2\} \in \dom M^*$ such that $\psi_{2-j} \in \dom \wt \Gb$. 
Taking the m-boundary tuple $(\Hs_{-,+},\Hs,  \Ga_0 , \Ga_1)$ associated by Proposition \ref{p:M*MBT} with the aforementioned reduction tuple, we get 
$
 \Ga_0 \psi =  \Gb \psi_2 , \   \Ga_1 \psi = \ii  W   \Gb \psi_1.
$ 
So 
$\wt \Ga_j$ are restrictions of $\Ga_j$, $j=0,1$.

Through this subsection we assume that $\wh Z$ is  an accretive operator in $\Hs$. 
An abstract version of a Leontovich-type boundary condition can be introduced as 
\begin{gather} \label{e:aGIBC}
\ii \wh Z \wt \Ga_0 \psi + \wt \Ga_1 \psi = 0  \qquad \text{(it can be rewritten as $ \wh Z \wt \Gb \psi_2 + \wt W  \wt \Gb \psi_1 = 0$).} 
\end{gather}

\begin{defn} \label{d:MimpZ}
The  \emph{abstract version $M_{\imp,\wh Z}$ of Leontovich-type operator associated with $\wh Z$} is the restriction  of $M^*$ to 
$
\dom M_{\imp,\wh Z} := \{ \psi\in \dom M^* \cap \dom \wt \Ga_0 \cap  \dom \wt\Ga_1 \ : \ \text{ (\ref{e:aGIBC}) is satisfied} \}. 
$
\end{defn}

In the case of Calkin's triple $(\LLt,\pi_{\top,2},-\n_\times)$ for $\curlm$ and the multiplication operator $\wh Z = \Mul_{\Z}$, one sees that (\ref{e:aGIBC}) becomes the Leontovich-type boundary condition (\ref{e:GIBC}).

Our aim now is to write (\ref{e:aGIBC}) in the form $\ii Z \Ga_0 \psi + \Ga_1 \psi = 0 $ of Section \ref{s:I-T} with a certain impedance-type operator 
$Z$.  
We define $Z$ by 
\begin{gather} \label{e:domZ}
\text{$Z h = \wh Z h $ for all $h$ belonging to } \dom Z := \{ h \in \wt \Hs_{-,+} \cap \dom \wh Z \ : \ 
\wh Z h \in \wt \Hs_{+,-}\} .
\end{gather}
Since $\wh Z$ is accretive in $\Hs$, we see that $Z$ is accretive as an operator from 
$ \Hs_{-,+} $ to $ \Hs_{+,-}$. So $Z$  is an impedance-type operator in the sense of Definition \ref{d:ImpOp}. 
Recall that  the operator $M_{\Gr (-\ii Z)}$ is the restriction of $M^*$ defined  by the  condition $\ii Z \Ga_0 \psi + \Ga_1 \psi = 0 $.

\begin{lem}  \label{l:M=M}
Condition (\ref{e:aGIBC}) is equivalent to the condition $\ii Z \Ga_0 \psi + \Ga_1 \psi = 0 $ (i.e., $M_{\imp, \wh Z} = M_{\Gr (-\ii Z)} $). In particular, $M_{\imp, \wh Z}$ is a dissipative extension of $M$.
\end{lem}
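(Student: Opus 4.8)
The plan is to prove the identity $M_{\imp,\wh Z}=M_{\Gr(-\ii Z)}$ by showing that these two restrictions of $M^*$ have the same domain; the dissipativity assertion will then be read off from what has already been recorded about $\Gr(-\ii Z)$. First I would write both domains in fully explicit form. By Definition~\ref{d:MimpZ} and the rewriting of (\ref{e:aGIBC}), a vector $\psi=\{\psi_1,\psi_2\}\in\dom M^*$ lies in $\dom M_{\imp,\wh Z}$ exactly when $\psi_1,\psi_2\in\dom\wt\Gb$ (which is what $\psi\in\dom\wt\Ga_0\cap\dom\wt\Ga_1$ amounts to) and $\wh Z\wt\Gb\psi_2+\wt W\wt\Gb\psi_1=0$. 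On the other side, unwinding (\ref{e:ATh}) for $\Th=\Gr(-\ii Z)$ together with $\Ga_0\psi=\Gb\psi_2$ and $\Ga_1\psi=\ii W\Gb\psi_1$, membership $\psi\in\dom M_{\Gr(-\ii Z)}$ means $\Gb\psi_2\in\dom Z$ and $Z\Gb\psi_2+W\Gb\psi_1=0$.

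The heart of the argument is the translation between these two sets of conditions, and it rests entirely on the bookkeeping supplied by the associated Calkin triple. I would use repeatedly that, by Corollary~\ref{c:RTtoCT}, $\wt\Gb f=\Gb f$ for every $f$ with $\Gb f\in\Hs$ (i.e.\ $f\in\dom\wt\Gb$), that $\wt W$ agrees with $W$ on $\wt\Hs_{-,+}$, and, crucially, that by condition (U2) of Definition~\ref{d:urt} the homeomorphism $W$ maps $\wt\Hs_{-,+}$ \emph{onto} $\wt\Hs_{+,-}$. For the inclusion $\dom M_{\imp,\wh Z}\subseteq\dom M_{\Gr(-\ii Z)}$, from $\psi_2\in\dom\wt\Gb$ one has $\Gb\psi_2\in\wt\Hs_{-,+}$; the defining relation forces $\wh Z\Gb\psi_2=-\wt W\Gb\psi_1=-W\Gb\psi_1\in\wt\Hs_{+,-}$ (here $\Gb\psi_1\in\wt\Hs_{-,+}$ is sent into $\wt\Hs_{+,-}$ by $W$), so by the definition (\ref{e:domZ}) of $Z$ we get $\Gb\psi_2\in\dom Z$ with $Z\Gb\psi_2=\wh Z\Gb\psi_2$, and the relation becomes $Z\Gb\psi_2+W\Gb\psi_1=0$.

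For the reverse inclusion I would begin from $\Gb\psi_2\in\dom Z$, which already gives $\Gb\psi_2\in\wt\Hs_{-,+}\subseteq\Hs$ and hence $\psi_2\in\dom\wt\Gb$; then $W\Gb\psi_1=-Z\Gb\psi_2\in\wt\Hs_{+,-}$ combined with the surjectivity in (U2) yields $\Gb\psi_1=W^{-1}(W\Gb\psi_1)\in\wt\Hs_{-,+}\subseteq\Hs$, so $\psi_1\in\dom\wt\Gb$ as well. Replacing $\wt\Gb$ by $\Gb$, $\wt W$ by $W$, and $\wh Z$ by $Z$ on these elements turns $Z\Gb\psi_2+W\Gb\psi_1=0$ back into (\ref{e:aGIBC}). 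This establishes equality of the domains and therefore of the operators, since both are restrictions of $M^*$.

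Finally, for the last sentence I would recall that $\Gr(-\ii Z)$ is dissipative (as noted just before the lemma, because $Z$ is accretive) and that $\{0,0\}\in\Gr(-\ii Z)$ by linearity of $Z$, so $M_{\Gr(-\ii Z)}$ contains $M$: indeed on $\dom M=(\dom A)^2=(\ker\Gb)^2$ both boundary maps vanish by Proposition~\ref{p:RT}(iii), and dissipativity then follows from condition (M2) of Definition~\ref{d:MBT}. The one genuine subtlety, and the step I would treat most carefully, is the passage between the \emph{membership} conditions $\psi_j\in\dom\wt\Gb$ and the \emph{range} conditions $\Gb\psi_j\in\wt\Hs_{\mp,\pm}$; this is precisely where the unitarity clause (U2) of the reduction tuple is indispensable, whereas everything else is a formal unwinding of definitions.
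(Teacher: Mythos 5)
Your proof is correct and follows essentially the same route as the paper's: both establish $\dom M_{\imp,\wh Z}=\dom M_{\Gr(-\ii Z)}$ by unwinding the definition (\ref{e:domZ}) of $Z$ as a restriction of $\wh Z$ and invoking the unitarity clause (U2), i.e.\ $W\wt\Hs_{-,+}=\wt\Hs_{+,-}$, to pass between the membership conditions $\psi_j\in\dom\wt\Gb$ and the range conditions on $\Gb\psi_j$, with dissipativity then read off from the accretivity of $Z$ as recorded in Section \ref{s:I-T}. Your version merely spells out the two inclusions separately, which the paper compresses into one sentence.
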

\begin{proof} The lemma follows from $\dom M_{\imp, \wh Z} = \dom M_{\Gr (-\ii Z)}$. Indeed, the restrictions imposed in (\ref{e:aGIBC}) by $\psi_j \in \dom \wt \Gb = \Gb^{-1} \wt \Hs_{-,+}$, $j=1,2$,  are imposed in $Z \Gb \psi_2 +  W \Gb \psi_1 = 0 $ via the more narrow domain of $Z$, i.e., 
$\Gb \psi_2 \in \dom Z \subset \wt \Hs_{-,+}$ and $ \Gb \psi_1 \in W^{-1} \ran Z  \subset W^{-1} \wt \Hs_{+,-} = \wt \Hs_{-,+}$ (we use the facts that $(\Hs_{-,+},\Hs,\Gb,W)$ is a unitary reduction tuple and, consequently, $W^{-1} \wt \Hs_{+,-} = \wt \Hs_{-,+}$).
Thus, $M_{\imp, \wh Z} = M_{\Gr (-\ii Z)} $ and this operator is  dissipative (see Section \ref{s:I-T}). 
\end{proof}

It follows from Proposition \ref{p:MixOrder}   that there exists a unique positive selfadjoint operator 
$\Soin$ in $\Hs$ with $\dom \Soin = \wt \Hs_{+,-} $ such that $\| h \|_{\Hs_{+,-}} = \| \Soin h\|_{\Hs}$ for all $h \in \wt \Hs_{+,-}$. Since $\Uop_{\Hs_{+,-} \to \Hs} = \Uop_{\Hs \to \Hs_{-,+}}^\#$ (by Lemma \ref{l:propertiesHadj}),
the operator $\Soin$ satisfies
\begin{gather} \label{e:Spm}
\Soin \ = \ \Sop^{-1} \ = \ \Uop_{\Hs \to \Hs_{-,+}} \! \uph_{\wt \Hs_{+,-}} \ = \ \Uop_{\Hs_{+,-} \to \Hs} \uph_{\wt \Hs_{+,-}} \ = \ \Uop_{\Hs \to \Hs_{-,+}}^\# \! \uph_{\wt \Hs_{+,-}} .
\end{gather}
 
\begin{thm}[criterion of m-dissipativity] \label{t:aIBCm-d}
Let $\wh Z$  be an accretive operator in $\Hs$. Then:
\item[(i)] The operator $M_{\imp, \wh Z}$ is m-dissipative (is essentially m-dissipative) if and only if the operator $\Soin \wh Z \Soin$ is m-accretive in $\Hs$ (resp., essentially m-accretive in $\Hs$). 
\item[(ii)] Assume additionally that $\wh Z $ is nonnegative. Then $M_{\imp, \wh Z}$ is m-dissipative (is essentially m-dissipative) if and only if $\Soin \wh Z \Soin$ is selfadjoint (resp., essentially selfadjoint) in $\Hs$.
\end{thm}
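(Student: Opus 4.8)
The plan is to reduce both statements to the already-established operator-level criteria of Corollary \ref{c:I-T} and Remark \ref{r:essMDI-T} by a clever choice of the homeomorphism $V$ and then by identifying the conjugated operator $V^\# Z V$ with $\Soin \wh Z \Soin$. By Lemma \ref{l:M=M} we already have $M_{\imp,\wh Z} = M_{\Gr (-\ii Z)}$, where $Z$ is the impedance-type operator of (\ref{e:domZ}); thus it suffices to analyse $M_{\Gr (-\ii Z)}$. Since the equivalence (i)$\Leftrightarrow$(ii) of Corollary \ref{c:I-T} does not involve $V$, its statement (iii) holds for \emph{every} homeomorphism $V:\Hs\to\Hs_{-,+}$, so I am free to fix the particular unitary $V := \Uop_{\Hs \to \Hs_{-,+}}$ of Proposition \ref{p:MixOrder}. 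For this choice Lemma \ref{l:propertiesHadj} yields $V^\# = \Uop_{\Hs_{+,-}\to\Hs}$, and (\ref{e:Spm}) shows that both $V$ and $V^\#$ restrict on $\wt\Hs_{+,-}$ to the positive selfadjoint operator $\Soin = \Sop^{-1}$, which maps $\wt\Hs_{+,-}$ bijectively onto $\wt\Hs_{-,+}$.

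The central technical step is then the operator identity $V^\# Z V = \Soin \wh Z \Soin$, with coinciding domains. For the action this is immediate: if $f \in \wt\Hs_{+,-}$ then $Vf = \Soin f \in \wt\Hs_{-,+}$, and whenever $Vf \in \dom Z$ one gets $Z V f = \wh Z \Soin f \in \wt\Hs_{+,-}$, whence $V^\# Z V f = \Soin \wh Z \Soin f$. The point requiring care is the matching of domains. Using the injectivity of $V$ together with $V(\wt\Hs_{+,-}) = \wt\Hs_{-,+}$, which gives $V^{-1}(\wt\Hs_{-,+}) = \wt\Hs_{+,-}$, and inserting the definition (\ref{e:domZ}) of $\dom Z$, I would show that $\dom(V^\# Z V) = \{ f \in \wt\Hs_{+,-} : \Soin f \in \dom\wh Z,\ \wh Z\Soin f \in \wt\Hs_{+,-}\}$, which is precisely $\dom(\Soin\wh Z\Soin)$ because $\dom\Soin = \wt\Hs_{+,-}$. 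This unwinding of the interplay between the unitaries and the spaces $\wt\Hs_{\mp,\pm}$ is the only delicate part, and I expect it to be the main obstacle.

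Granting the identity, part (i) is immediate: by Corollary \ref{c:I-T}, $M_{\Gr (-\ii Z)}$ is m-dissipative if and only if $V^\# Z V = \Soin\wh Z\Soin$ is m-accretive in $\Hs$; and by Remark \ref{r:essMDI-T} it is essentially m-dissipative if and only if $\overline{\Gr(\Soin\wh Z\Soin)}$ is a maximal accretive linear relation, that is, $\Soin\wh Z\Soin$ is essentially m-accretive. For part (ii) I would first note that, for $h\in\dom Z\subseteq\wt\Hs_{-,+}$, one has $\<Z h|h\>_\Hs = (\wh Z h|h)_\Hs$ (both arguments lying in $\Hs$, so the $\Hs$-pairing collapses to the inner product), whence nonnegativity of $\wh Z$ makes $Z$, equivalently the relation $\Phi := \Gr Z$, nonnegative. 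Remark \ref{r:I-T} then sharpens the accretivity criterion: $\Phi_V = \Gr(\Soin\wh Z\Soin)$ is maximal accretive if and only if $\Phi_V = (\Phi_V)^*$, i.e.\ if and only if $\Soin\wh Z\Soin$ is selfadjoint, which settles the m-dissipative case. Applying the same reasoning to the closure $\overline\Phi$ (still nonnegative) and using $(\overline\Phi)_V = \overline{\Gr(\Soin\wh Z\Soin)}$ gives the essentially-selfadjoint characterization of the essentially m-dissipative case, completing the proof.
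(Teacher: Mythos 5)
Your proposal is correct and follows essentially the same route as the paper: reduce to $M_{\Gr(-\ii Z)}$ via Lemma \ref{l:M=M}, fix $V=\Uop_{\Hs\to\Hs_{-,+}}$, establish the identity $V^\# Z V=\Soin\wh Z\Soin$ (this is precisely the paper's Lemma \ref{l:SZS=UZU}, including the domain-matching via the bijectivity of $\Soin$ from $\wt\Hs_{+,-}$ onto $\wt\Hs_{-,+}$), and then invoke Corollary \ref{c:I-T}, Remark \ref{r:essMDI-T}, and Remark \ref{r:I-T}. No substantive differences.
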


Theorem \ref{t:aIBCm-d} follows immediately from Corollary \ref{c:I-T} and Remarks \ref{r:I-T}--\ref{r:essMDI-T} applied to the operator $M_{\imp, \wh Z} = M_{\Gr (-\ii Z)} $ with the use of the following lemma. 

\begin{lem}  \label{l:SZS=UZU}
$\Soin \wh Z \Soin =  \Uop_{\Hs_{+,-} \to \Hs} Z \Uop_{\Hs \to \Hs_{-,+}}$ 
\end{lem}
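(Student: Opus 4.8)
The plan is to verify the asserted operator identity by unwinding the definitions of the unitaries and of $Z$, checking separately that the two sides have the same domain and act identically on it. Since every operator in sight is unbounded, the only genuine work is the domain bookkeeping; the action is a mechanical substitution.

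First I would record the data I may quote. By (\ref{e:Spm}), $\Soin = \Sop^{-1}$ as operators in $\Hs$, with $\dom \Soin = \wt \Hs_{+,-}$, and the two restricted unitaries I need both agree with $\Soin$ on this subspace: $\Uop_{\Hs \to \Hs_{-,+}}\uph_{\wt \Hs_{+,-}} = \Soin = \Uop_{\Hs_{+,-} \to \Hs}\uph_{\wt \Hs_{+,-}}$. For the range I would use the identity $\wt \Hs_{-,+} = \ran \Sop^{-1}$ of Proposition \ref{p:MixOrder}(i), so that $\ran \Soin = \wt \Hs_{-,+}$.

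The key geometric step is the preimage identity $\{\,g \in \Hs : \Uop_{\Hs \to \Hs_{-,+}} g \in \wt \Hs_{-,+}\,\} = \wt \Hs_{+,-}$, on which $\Uop_{\Hs \to \Hs_{-,+}}$ coincides with $\Soin$. To prove it I would observe that the unitary $\Uop_{\Hs \to \Hs_{-,+}}$ is a bijection of $\Hs$ onto $\Hs_{-,+}$ whose restriction to $\wt \Hs_{+,-}$ equals $\Soin = \Sop^{-1}$, and therefore maps $\wt \Hs_{+,-}$ bijectively onto $\ran \Sop^{-1} = \wt \Hs_{-,+}$. For a bijection, an exact image relation on a subset forces the exact preimage relation, so the full preimage of $\wt \Hs_{-,+}$ is precisely $\wt \Hs_{+,-}$, as claimed.

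With this in hand I would compare both sides. A point $g$ lies in the domain of $\Uop_{\Hs_{+,-} \to \Hs} Z \Uop_{\Hs \to \Hs_{-,+}}$ exactly when $\Uop_{\Hs \to \Hs_{-,+}} g \in \dom Z$; by (\ref{e:domZ}) this unwinds to the three conditions $\Uop_{\Hs \to \Hs_{-,+}} g \in \wt \Hs_{-,+}$, $\Uop_{\Hs \to \Hs_{-,+}} g \in \dom \wh Z$, and $\wh Z\, \Uop_{\Hs \to \Hs_{-,+}} g \in \wt \Hs_{+,-}$. The geometric step converts the first condition into $g \in \wt \Hs_{+,-}$ and simultaneously replaces $\Uop_{\Hs \to \Hs_{-,+}} g$ by $\Soin g$ throughout, producing exactly the domain $\{\,g \in \wt \Hs_{+,-} : \Soin g \in \dom \wh Z,\ \wh Z \Soin g \in \wt \Hs_{+,-}\,\}$ of $\Soin \wh Z \Soin$. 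On this common domain, writing $h := \Uop_{\Hs \to \Hs_{-,+}} g = \Soin g \in \dom Z$ and using $Z h = \wh Z h$ together with $Zh \in \wt \Hs_{+,-}$, I would evaluate $\Uop_{\Hs_{+,-} \to \Hs}(Z h) = \Soin \wh Z \Soin g$, which matches $\Soin \wh Z \Soin$. The main obstacle is purely this matching of unbounded domains through the restricted actions of the unitaries; once the bijectivity argument pins down the preimage of $\wt \Hs_{-,+}$, everything else is routine.
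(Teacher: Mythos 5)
Your proof is correct and follows essentially the same route as the paper: both arguments rest on the facts that $\Uop_{\Hs \to \Hs_{-,+}}$ and $\Uop_{\Hs_{+,-} \to \Hs}$ restrict to $\Soin$ on $\wt\Hs_{+,-}$, that $\Soin$ is a bijection of $\wt\Hs_{+,-}$ onto $\wt\Hs_{-,+}$, and that $Z$ is the restriction of $\wh Z$ given by (\ref{e:domZ}). You merely spell out the domain bookkeeping that the paper's proof compresses into one sentence, which is a harmless (indeed helpful) elaboration rather than a different method.
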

\begin{proof} 
Since $\dom Z \subseteq \wt \Hs_{-,+} $ and $\ran Z \subseteq \wt \Hs_{+,-}$, the mapping $h \mapsto Zh$ can be treated both as an operator from $\Hs_{-,+}$ to $\Hs_{+,-}$ and as an operator in $\Hs$. In turn, $ \Uop_{\Hs_{+,-} \to \Hs} Z \Uop_{\Hs \to \Hs_{-,+}} = \Soin Z \Soin$. Since $Z$ is a restriction of $\wh Z$ to the domain given by  (\ref{e:domZ}), the fact that $\Soin$ is a bijection from $\dom \Soin = \wt \Hs_{+,-}$ onto $\wt \Hs_{-,+}$ (see Proposition \ref{p:MixOrder}) implies $\Soin Z \Soin = \Soin \wh Z \Soin$. This completes the proofs of Lemma \ref{l:SZS=UZU} and 
Theorem \ref{t:aIBCm-d}.
\end{proof}

\begin{cor} \label{c:sectorialL}
Assume that a bounded accretive operator $\wh Z:\Hs \to \Hs$ and a constant $C_1>0$ satisfy 
$C_1 \|h\|^2 \le |(\wh Z h|h)_\Hs| $ for all $h\in \Hs$.  
Then $M_{\imp, \wh Z}$ is m-dissipative.
\end{cor}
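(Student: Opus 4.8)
The plan is to invoke the criterion of Theorem \ref{t:aIBCm-d}(i): since $\wh Z$ is accretive in $\Hs$, the operator $M_{\imp,\wh Z}$ is m-dissipative as soon as $G:=\Soin\wh Z\Soin$ is m-accretive in $\Hs$, where $\Soin$ is the positive selfadjoint operator of Proposition \ref{p:MixOrder} with $\dom\Soin=\wt\Hs_{+,-}$ and $\Soin=\Sop^{-1}$, see (\ref{e:Spm}). First I record two elementary consequences of the hypotheses: writing $b=(\wh Z h|h)_\Hs$, accretivity gives $\re b\ge0$ and the extra assumption gives $|b|\ge C_1\|h\|^2$, and the same two bounds hold for $\wh Z^*$ because $(\wh Z^*h|h)_\Hs=\overline{b}$. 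Moreover $G$ is accretive, since for $u\in\dom G$ one has $(Gu|u)_\Hs=(\wh Z\Soin u|\Soin u)_\Hs$, whose real part is nonnegative.

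To prove $G$ m-accretive I would verify the resolvent condition of Definition \ref{d:dis} directly, i.e. that $G+\zeta$ is a bijection of $\dom G$ onto $\Hs$ with $\|(G+\zeta)^{-1}\|\le(\re\zeta)^{-1}$ for every $\zeta$ with $\re\zeta>0$; this is equivalent to $\CC_+\subseteq\rho(-\ii G)$ with the required estimate and avoids any separate density discussion. Accretivity already yields $\|(G+\zeta)u\|\ge\re\zeta\,\|u\|$, so $G+\zeta$ is injective with closed range, and it remains to establish surjectivity. The key idea is to reduce the inversion of $G+\zeta$ to that of the bounded-below operator $Y_\zeta:=\wh Z+\zeta\Sop^2$ on $\dom\Sop^2$, since formally $G+\zeta=\Soin Y_\zeta\Soin$ because $\Soin\Sop^2\Soin$ acts as the identity.

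The heart of the matter is that $Y_\zeta$ is boundedly invertible. For $w\in\dom\Sop^2$ put $a=(\Sop^2w|w)_\Hs\ge0$ and $b=(\wh Z w|w)_\Hs$; then $(Y_\zeta w|w)_\Hs=\zeta a+b$ with $\re b\ge0$ and $|b|\ge C_1\|w\|^2$. For real $\zeta>0$ this is immediate, $|\zeta a+b|\ge|b|\ge C_1\|w\|^2$; for general $\re\zeta>0$ a short numerical-range argument shows $\inf\{|\zeta a+b|:a\ge0,\ \re b\ge0,\ |b|\ge C_1\}>0$ (if $|\zeta a_n+b_n|\to0$ with $\|w\|=1$, then $a_n\re\zeta+\re b_n\to0$ forces $a_n\to0$, hence $b_n\to0$, contradicting $|b_n|\ge C_1$). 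Thus $|(Y_\zeta w|w)_\Hs|\ge c_\zeta\|w\|^2$, so $\|Y_\zeta w\|\ge c_\zeta\|w\|$; the identical estimate applied to $Y_\zeta^*=\wh Z^*+\overline{\zeta}\Sop^2$ shows $\ran Y_\zeta$ is dense, whence $Y_\zeta$ maps $\dom\Sop^2$ bijectively onto $\Hs$. Now for $f\in\wt\Hs_{-,+}=\dom\Sop$ I set $w:=Y_\zeta^{-1}\Sop f$ and $u:=\Sop w$; a direct check using $w\in\dom\Sop^2$ gives $u\in\dom\Sop\cap\dom\Soin$, $\Soin u=w$, and $\wh Z w=\Sop(f-\zeta u)\in\ran\Sop=\dom\Soin$, so that $u\in\dom G$ and $(G+\zeta)u=\Soin\wh Z w+\zeta u=(f-\zeta u)+\zeta u=f$. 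Hence $\ran(G+\zeta)\supseteq\wt\Hs_{-,+}$ is dense and, being closed, equals $\Hs$; therefore $G$ is m-accretive and Theorem \ref{t:aIBCm-d}(i) gives the claim.

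The step I expect to be the main obstacle is the domain bookkeeping for the unbounded sandwich $\Soin(\cdot)\Soin$: the identity $G+\zeta=\Soin Y_\zeta\Soin$ is only literally valid on a suitable core, so surjectivity must be obtained by explicitly constructing the preimage $u=\Sop Y_\zeta^{-1}\Sop f$ and verifying that $\wh Z\Soin u\in\dom\Soin$, which is precisely the membership $u\in\dom G$. It is exactly here that the strict lower bound $|(\wh Z h|h)_\Hs|\ge C_1\|h\|^2$ is indispensable: without it the numerical range of $Y_\zeta$ may approach $0$, $Y_\zeta$ need not be surjective, and $G$ need not be m-accretive, in line with the non-m-dissipativity phenomenon of Proposition \ref{p:non-m-dis}.
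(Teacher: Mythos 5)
Your overall strategy --- reducing to m-accretivity of $G=\Soin\wh Z\Soin$ via Theorem \ref{t:aIBCm-d}(i) --- is the paper's, but your route to m-accretivity is genuinely different. The paper applies the Lax--Milgram theorem to the form $\bfr(u,v)=(\wh Z\Soin u|\Soin v)_\Hs+(u|v)_\Hs$ on $(\wt\Hs_{+,-},\|\cdot\|_{\wt\Hs_{+,-}})$, which produces for \emph{every} $f\in\Hs$ a $u\in\dom\Soin$ with $(\wh Z\Soin u|\Soin v)_\Hs=(f-u|v)_\Hs$ for all $v\in\dom\Soin$; since $\Soin^*=\Soin$, this identity itself forces $\wh Z\Soin u\in\dom\Soin$, so $\ran(G+I_\Hs)=\Hs$ exactly and m-accretivity follows. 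Your explicit construction via $Y_\zeta=\wh Z+\zeta\Sop^2$ is sound as far as it goes (the numerical-range bound, the bijectivity of $Y_\zeta$, and the verification that $u=\Sop Y_\zeta^{-1}\Sop f$ lies in $\dom G$ with $(G+\zeta)u=f$ all check out), but it only yields $\ran(G+\zeta)\supseteq\wt\Hs_{-,+}$, i.e.\ a \emph{dense} range.

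The genuine gap is the sentence ``accretivity already yields $\|(G+\zeta)u\|\ge\re\zeta\,\|u\|$, so $G+\zeta$ is injective with closed range.'' A lower bound implies closed range only for a \emph{closed} operator, and closedness of the sandwich $G=\Soin\wh Z\Soin$ is neither automatic nor addressed anywhere in your argument; without it you prove only that $\overline{G}$ is m-accretive, hence by Theorem \ref{t:aIBCm-d}(i) only that $M_{\imp,\wh Z}$ is \emph{essentially} m-dissipative, which is strictly weaker than the claim. The gap is repairable by using your hypothesis once more: if $u_n\to u$ and $Gu_n\to g$, then
\[
C_1\|\Soin(u_n-u_m)\|_\Hs^2\le\bigl|(\wh Z\Soin(u_n-u_m)|\Soin(u_n-u_m))_\Hs\bigr|=\bigl|(G(u_n-u_m)|u_n-u_m)_\Hs\bigr|\to 0,
\]
so $(\Soin u_n)$ is Cauchy, and the closedness of the selfadjoint operator $\Soin$ together with the boundedness of $\wh Z$ gives $u\in\dom G$ and $Gu=g$; thus $G$ is closed and your dense range is in fact all of $\Hs$. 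Alternatively, run your surjectivity step weakly for arbitrary $f\in\Hs$ via the identity $(\wh Z\Soin u|\Soin v)_\Hs=(f-\zeta u|v)_\Hs$ and $\Soin^*=\Soin$ --- which is exactly the paper's Lax--Milgram route. Either patch completes the proof.
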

\begin{proof} 
Since $\Soin = \Soin^*$ and $\wh Z$ is accretive, $\Soin \wh Z \Soin$ is also accretive. Due to Theorem \ref{t:aIBCm-d}, it is enough to prove that $\Soin \wh Z \Soin + I_\Hs$ is boundedly invertible in $\Hs$, which can be done applying the Lax–Milgram theorem (e.g.,  \cite{ACL17}) to 
the form 
$\bfr (u,v) := (\wh Z \Soin u | \Soin v)_\Hs + ( u |  v)_\Hs$ on the Hilbert space $(\wt \Hs_{+,-}, \|\cdot\|_{\wt \Hs_{+,-}})$ with $\| h \|_{\wt \Hs_{+,-}}^2 = \| \Soin h\|_\Hs^2 +\|h\|_\Hs^2 $. 
\end{proof}

\section{Applications to m-dissipative Maxwell operators}
\label{s:mdM}

Let $\M$ be the symmetric Maxwell operator  in $ \LL^2_{\epb,\mub} (\Om)$ defined by
(\ref{e:iM}). Then the operator $\M^* = \left( \begin{smallmatrix} 0 & \ii \epb^{-1} \curlm \\
-\ii \mub^{-1} \curlm & 0 \end{smallmatrix} \right) $ is defined on $\dom \M^* = (\HH (\curlm,\Om))^2$.

\begin{thm} \label{t:MaxwellBT}
(i) Let us define  $\pi_\top^{\hf} \{\E,\H\} := \pi_\top \H$ and $\ga_\top^{\ef} \{\E,\H\} := \ga_\top \E$. Then \begin{gather} 
(\Hs_{-,+},\Hs, \Ga_0 ,\Ga_1) = (\HH^{-1/2} (\curl_{\pa \Om}),\LLt , \pi_\top^{\hf}, \   \ii \ga_\top^\ef ) \text{ is an
m-boundary tuple for $\M^*$.}\label{e:MBTM*}
\end{gather}

\item[(ii)] In the settings of (i), an operator $\wt \M$ is an m-dissipative extension of the symmetric Maxwell operator $\M$ if and only if $\wt \M = \M_\Th$ for a  certain maximal dissipative linear relation $\Th$ from $\Hs_{-,+} = \HH^{-1/2} (\curl_{\pa \Om})$ to $\Hs_{+,-} = \HH^{-1/2} (\Div_{\pa \Om})$. (According to (\ref{e:ATh}), this means that $\M_\Th := \M^* \uph_{\dom \M_\Th}$ with $\dom \M_\Th := \{ \{\E,\H\} \in \dom \M^* \ : \  \{\pi_\top \H,  \ii \ga_\top \E\} \in \Th \}$). 
\end{thm}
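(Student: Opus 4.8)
The plan is to deduce both statements from the abstract machinery already assembled in the excerpt, so that essentially no new computation is required. The key observation is that the concrete Maxwell operator $\M$ of (\ref{e:iM}) is exactly an abstract $S$-weighted Maxwell operator in the sense of (\ref{e:M}): taking $A=\curln$ and $\Hc=\LL^2(\Om)$, the weight $S=\left(\begin{smallmatrix} \epb & 0 \\ 0 & \mub \end{smallmatrix}\right)$ is bounded, uniformly positive, and selfadjoint on $\Hc^2=\LL^2(\Om)\oplus\LL^2(\Om)$, and it produces precisely the inner product of $\LL^2_{\epb,\mub}(\Om)$. Thus $\LL^2_{\epb,\mub}(\Om)=\Hc^{2,S}$ and $\M$ coincides with the operator $M$ of (\ref{e:M}). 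Consequently $\M^*=S^{-1}\left(\begin{smallmatrix} 0 & \ii\curlm \\ -\ii\curlm & 0 \end{smallmatrix}\right)$ with $\dom\M^*=\HH(\curlm,\Om)^2$, matching the formula displayed before the theorem.

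For part (i), I would invoke Proposition \ref{p:M*MBT} directly. By Corollary \ref{c:curlRT}(i), the tuple $(\HH^{-1/2}(\curl_{\pa\Om}),\LLt,\pi_\top,-\n_\times^{\pi\to\ga})$ is a (unitary) reduction tuple for $\curlm=A^*$, with $\Gb=\pi_\top$ and rotation $W=-\n_\times^{\pi\to\ga}$. Feeding this reduction tuple into Proposition \ref{p:M*MBT} yields an m-boundary tuple $(\Hs_{-,+},\Hs,\Ga_0,\Ga_1)$ for $M^*=\M^*$ with $\Ga_0\psi=\Gb\psi_2=\pi_\top\H$ and $\Ga_1\psi=\ii W\Gb\psi_1=\ii(-\n_\times^{\pi\to\ga})\pi_\top\E=\ii\ga_\top\E$, where the last equality is just the definition $\n_\times^{\pi\to\ga}=-\ga_\top\pi_\top^{-1}$. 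This is exactly $\Ga_0=\pi_\top^{\hf}$ and $\Ga_1=\ii\ga_\top^{\ef}$, establishing (\ref{e:MBTM*}). The only point requiring care is bookkeeping the identifications $\psi=\{\E,\H\}$, $\psi_1=\E$, $\psi_2=\H$, and checking that the pivot space $\Hs=\LLt$ and the m-order spaces $\Hs_{\mp,\pm}$ are those fixed in Corollary \ref{c:DevDuality}(ii); this is where I expect the main (though still routine) effort, since one must confirm that the norms on the trace spaces are the mutually dual norms $\|\cdot\|_{\pi(\ga)}$ so that Proposition \ref{p:M*MBT} applies verbatim.

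For part (ii), the statement is a direct instance of the abstract parametrization already proved. Since $\M$ is closed, densely defined, and symmetric (as noted after (\ref{e:iM})), and since (\ref{e:MBTM*}) provides an m-boundary tuple for $\M^*$, I would apply Corollary \ref{c:MBTm-D}: an extension $\wt\M$ of $\M$ is m-dissipative if and only if $\wt\M=\M_\Th$ for a maximal dissipative linear relation $\Th$ from $\Hs_{-,+}=\HH^{-1/2}(\curl_{\pa\Om})$ to $\Hs_{+,-}=\HH^{-1/2}(\Div_{\pa\Om})$. The explicit description of $\dom\M_\Th$ in the theorem is then obtained by substituting $\Ga_0\{\E,\H\}=\pi_\top\H$ and $\Ga_1\{\E,\H\}=\ii\ga_\top\E$ into the defining relation (\ref{e:ATh}), i.e. $\dom\M_\Th=\{\{\E,\H\}\in\dom\M^*:\{\pi_\top\H,\ii\ga_\top\E\}\in\Th\}$. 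Here the one subtlety is that Corollary \ref{c:MBTm-D}(i) guarantees every dissipative (hence every m-dissipative) extension is admissible, so that the correspondence $\wt\M\leftrightarrow\Th$ is exhaustive; this rests on the admissibility result cited from \cite{K77}, which I would simply quote. No genuine obstacle remains, as the whole content has been pushed into the already-established abstract results.
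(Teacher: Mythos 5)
Your proposal is correct and follows essentially the same route as the paper: part (i) is obtained by feeding the unitary reduction tuple of Corollary \ref{c:curlRT}(i) into Proposition \ref{p:M*MBT} (with $A=\curln$, $S=\mathrm{diag}(\epb,\mub)$, $\psi_1=\E$, $\psi_2=\H$, and $W\pi_\top\E=\ga_\top\E$), and part (ii) is a direct application of Corollary \ref{c:MBTm-D}. The identifications you flag as needing care are exactly the ones the paper relies on, and they check out.
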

\begin{proof} (i) follows from Corollary \ref{c:curlRT} and Proposition \ref{p:M*MBT},  (ii) from  Corollary \ref{c:MBTm-D}.\end{proof}

 
An equivalent representation of m-dissipative extensions $\wt M$ in terms of boundary conditions is provided by Theorem \ref{t:m-dMax}, which is an immediate corollary of  Theorem \ref{t:absM-dis}.

According to Section \ref{s:I-T}, an operator $\Zc$ is called an \emph{impedance-type operator associated with the m-boundary tuple} (\ref{e:MBTM*})
if $\Zc$ is an accretive operator from $\HH^{-1/2} (\curl_{\pa \Om})$ to $\HH^{-1/2} (\Div_{\pa \Om})$.
Then the boundary conditions (\ref{e:I-TC}) can be written as $
\Zc \pi_\top \H  +  \ga_\top \E = 0 
$
and all the results of Section \ref{s:I-T} are applicable to 
the Maxwell operator $\wt \M = \M_{\Gr (-\ii \Zc)}$ defined by this boundary condition.
In particular, Corollary \ref{c:I-T} implies Corollary \ref{c:Z}. 

The Leontovich-type boundary conditions or, more generally, the boundary condition 
\begin{gather} \label{e:hZBC2}
\wh \Zc \pi_{\top,2} \H + \ga_{\top,2} \E  = 0 , 
\end{gather}
with an arbitrary accretive operator $\wh \Zc:\dom \wh \Zc \subseteq \LLt \to \LLt$ can be put in the above framework in the way shown by Section \ref{s:GIBCa}. This means that 
we choose the reduction tuple $(\Hs_{-,+},\Hs, \Gb, W) = (\HH^{-1/2} (\curl_{\pa \Om}),\LLt, \pi_\top, \  -\n_\times^{\pi \to \ga})$ for the operator $\curlm$
and consider the associated Calkin triple 
$(\Hs,\wt \Gb,\wt W) = (\LLt, \pi_{\top,2},  \ -\n_\times)$
constructed in Section \ref{s:RT}. Proposition \ref{p:M*MBT} connects them with the m-boundary tuple (\ref{e:MBTM*}).
Then the m-dissipativity and essential m-dissipativity of the Maxwell operator $\M_{\imp,\wh \Zc}$ defined by (\ref{e:hZBC2}) are characterized by Theorem \ref{t:aIBCm-d}. Theorem \ref{t:aIBCm-d}  contains  
Theorem \ref{t:CritGIMC} (i) as its particular case since 
$\Soin = \Sdiv$ in the present settings. To get Theorem \ref{t:CritGIMC} (ii), one takes into account Remark \ref{r:I-T}.  

As an application, let us consider the following special class of operators $\wh \Zc$.

\begin{cor} 
\label{c:f(De)}
Assume that an accretive operator $\wh \Zc:\dom \wh \Zc \subseteq \LLt \to \LLt$ is diagonal w.r.t. the orthonormal basis $\{\vv_j \}_{j\in \ZZ\setminus\{0\}} \cup \{\vv_{0,j}\}_{j=1}^{b_1(\pa \Om)} $ of Theorem \ref{t:basis} and has a maximal possible domain of definition. That is, there exist sequences $\{\mu_j\}_{j\in \ZZ\setminus\{0\}} \subset \ii \overline{\CC}_-$ and 
$\{\mu_{0,j}\}_{j=1}^{b_1(\pa \Om)} \subset \ii \overline{\CC}_-$ such that $\wh \Zc \vv_j = \mu_j \vv_j$, $\wh \Zc \vv_{0,j} = \mu_{0,j} \vv_{0,j}$, and 
\[ \textstyle
\dom \wh \Zc = 
\left\{  \uu  = \sum_{j \in \ZZ\setminus\{0\}} \al_j \vv_j + \sum_{j=1}^{b_1 (\pa \Om)}
  \al_{0,j} \vv_{0,j} : \sum_{j \in \ZZ\setminus\{0\}} 
  | \al_j|^2 <\infty, \ \sum_{j \in \ZZ\setminus\{0\}} 
  |\mu_j \al_j|^2 <\infty
\right\}.
\]
Then the Maxwell operator $\M_{\imp,\wh \Zc}$ defined by (\ref{e:hZBC2}) is essentially m-dissipative. 
\end{cor}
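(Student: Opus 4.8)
The plan is to invoke the m-dissipativity criterion of Theorem~\ref{t:aIBCm-d}(i) and thereby replace the assertion by an \emph{essential m-accretivity} statement for a completely diagonal boundary operator, which can then be settled by a core argument for normal operators. In the present Maxwell setting one has $\Soin = \Sdiv$ (see Section~\ref{s:mdM}), so by Theorem~\ref{t:aIBCm-d}(i) it suffices to show that $T := \Sdiv\,\wh\Zc\,\Sdiv$ is essentially m-accretive in $\LLt$. Recall that $\Sdiv$ is a positive selfadjoint operator (Proposition~\ref{p:MixOrder} and (\ref{e:Spm})) which, by (\ref{e:Sga0}) together with Corollary~\ref{c:DevDuality}, is diagonal with respect to the orthonormal basis $\{\vv_j\}_{j\in\ZZ\setminus\{0\}}\cup\{\vv_{0,j}\}_{j=1}^{b_1(\pa\Om)}$ of Theorem~\ref{t:basis}, with strictly positive eigenvalues $\sigma_j>0$ on the $\vv_j$ and eigenvalue $1$ on $\KK_1(\pa\Om)$. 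By hypothesis $\wh\Zc$ is diagonal in the \emph{same} basis, with eigenvalues $\mu_j,\mu_{0,j}\in\ii\overline{\CC}_-$.

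First I would diagonalize $T$. Since $\Sdiv$ and $\wh\Zc$ share this orthonormal eigenbasis, $T$ is again diagonal in it, with eigenvalues $\sigma_j^2\mu_j$ on $\vv_j$ and $\mu_{0,j}$ on $\vv_{0,j}$; because $\sigma_j^2>0$ and $\re\mu_j\ge0$, every such eigenvalue lies in the closed right half-plane $\{z\in\CC:\re z\ge0\}$. Each basis vector is a single mode, hence it passes through the three maps $\Sdiv,\wh\Zc,\Sdiv$ without leaving their domains ($\Sdiv\vv_j=\sigma_j\vv_j\in\dom\wh\Zc$ and $\wh\Zc\Sdiv\vv_j=\sigma_j\mu_j\vv_j\in\dom\Sdiv$), so it belongs to $\dom T$. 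Thus the dense finite linear span $D_0$ of the basis satisfies $D_0\subseteq\dom T$, and $T\!\uph_{D_0}$ is exactly the associated diagonal operator on $D_0$.

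Next I would compare $T$ with the maximal diagonal operator $T_{\max}$ carrying the same eigenvalues $z_k$ in this basis. The operator $T_{\max}$ is normal with spectrum in $\{\re z\ge0\}$, hence m-accretive: it is accretive since $\re(T_{\max}u\,|\,u)_{\LLT}=\sum_k(\re z_k)|c_k|^2\ge0$ for $u=\sum_k c_k e_k\in\dom T_{\max}$, and $\ran(T_{\max}+I_{\LLT})=\LLt$ because $|z_k+1|\ge1$ for all $k$. Moreover $D_0$ is a core for $T_{\max}$, as finite truncations of any $u\in\dom T_{\max}$ converge to $u$ in the graph norm. Since $T\!\uph_{D_0}\subseteq T\subseteq T_{\max}$ and $\overline{T\!\uph_{D_0}}=T_{\max}$, a squeezing argument forces $\overline{T}=T_{\max}$, so $T$ is essentially m-accretive; Theorem~\ref{t:aIBCm-d}(i) then yields that $\M_{\imp,\wh\Zc}$ is essentially m-dissipative.

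The one genuinely delicate point, and the reason the conclusion is only \emph{essential} m-dissipativity, is that the composition domain $\dom T$ need not coincide with $\dom T_{\max}$: when the $\mu_j$ grow, the intermediate requirements $\Sdiv u\in\dom\wh\Zc$ and $\wh\Zc\Sdiv u\in\dom\Sdiv$ can be strictly stronger than square-summability of $(z_k c_k)_k$, so $T$ itself may fail to be closed. The core/squeeze step is precisely what bridges this gap. I would also emphasize that the argument never uses the explicit values of $\sigma_j$; only the positivity of $\sigma_j^2$ and the sign condition $\re\mu_j\ge0$ enter, which is what makes the reduction to a normal operator with right-half-plane spectrum so transparent.
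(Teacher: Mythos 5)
Your proposal is correct and follows essentially the same route as the paper: reduce via Theorem \ref{t:aIBCm-d}(i) (with $\Soin=\Sdiv$) to essential m-accretivity of $\Sdiv\,\wh\Zc\,\Sdiv$, and observe that this operator is densely defined and diagonal in the basis of Theorem \ref{t:basis} with closure a normal (in your version, maximal diagonal) accretive operator, hence m-accretive. The only difference is that you spell out the core/squeeze step identifying $\overline{T}$ with $T_{\max}$, which the paper leaves implicit in the phrase ``its closure is a normal operator ... hence m-accretive.''
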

\begin{proof}
We apply Theorem \ref{t:aIBCm-d} in the settings described above. In particular, the selfadjoint in $\Hs$ operator $\Soin = \Sdiv$ is diagonal w.r.t. the basis $\{\vv_j \}_{j\in \ZZ\setminus\{0\}} \cup \{\vv_{0,j}\}_{j=1}^{b_1(\pa \Om)} $, see (\ref{e:Sga0}) and Theorem \ref{t:basis}. Hence, the operator $\Tc = \Soin \wh \Zc \Soin = \Sdiv \wh \Zc \Sdiv$ is densely defined and its closure $\overline{\Tc}$ is a normal operator. Since $\wh \Zc$ and $\Tc$ are accretive, we see that $\overline{\Tc}$ is  m-accretive. Thus, Theorem \ref{t:aIBCm-d} implies that
$\M_{\imp,\wh \Zc}$ is essentially m-dissipative.
\end{proof}

This corollary and Theorem \ref{t:basis} imply the statement of Example \ref{ex:De}.

All m-dissipative extensions of the dissipative operator $\M_{\imp, \wh Z} $ defined by (\ref{e:hZBC2}) are described by Theorem \ref{t:I-Text} and Lemma \ref{l:M=M}. This and Theorem   \ref{t:Upi} prove Theorem \ref{t:Kz}.

Let us consider in more detail a particular case of Leontovich-type boundary condition  
with a nonnegative measurable function $\Z : \pa \Om \to [0,+\infty)$ 
as the impedance coefficient, see (\ref{e:GIBC}). 
Recall that by 
$\Lc_\Z$ we denote the associated Leontovich-type operator, i.e., $\Lc_\Z$ is a restriction of the  Maxwell operator 
$\M^* $ to the set of $\{\E,\H\} \in \dom \M^*$ satisfying 
$\Z (\x) \pi_{\top,2} \H (\x)+ \ga_{\top,2} \E (\x) = 0 $  a.e. on $ \pa \Om$. By (\ref{e:ATh}), there exists a linear relation $\Th$ from $\HH^{-1/2} (\curl_{\pa \Om})$ to $\HH^{-1/2} (\Div_{\pa \Om})$ such that $\Lc_\Z = \M_\Th$. It is easy to see that $\Th = \Gr (-\ii \wt \Mul_{\Z})$, where 
\begin{gather*} 
\wt \Mul_{\Z} := \Mul_\Z \uph_{\dom \wt \Mul_{\Z}}, \quad
 \dom \wt \Mul_{\Z} := \{ \uu \in \HH^{-1/2} (\curl_{\pa \Om}) \cap \dom \Mul_\Z  \ : \  
 \Z \uu \in \HH^{-1/2} (\Div_{\pa \Om}) \},
\end{gather*}
i.e., $\wt \Mul_{\Z} : \dom \wt \Mul_{\Z} \subset  \HH^{-1/2} (\curl_{\pa \Om}) \to \HH^{-1/2} (\Div_{\pa \Om}) $ is the restriction to $\dom \wt \Mul_{\Z}$ of the selfadjoint in $\LLt$ multiplication operator $\Mul_\Z: \uu \mapsto \Z \uu $.
Since $\< \wt \Mul_{\Z} \uu | \uu \>_{\LLT} = \int_{\pa \Om} \Z \|\uu (\x)\|_{\CC^3}^2 \ge 0$ for all $\uu \in \dom \wt \Mul_{\Z}$, Definition \ref{d:Th}  imply that 
$\wt \Mul_{\Z}$ is a nonnegative operator from $\HH^{-1/2} (\curl_{\pa \Om})$ to $\HH^{-1/2} (\Div_{\pa \Om})$, and so $\Gr (-\ii \wt \Mul_{\Z})$ is a dissipative  linear relation. By Remark \ref{r:OpRel} (ii), we see that $\Lc_\Z=\M_{\Gr (-\ii \wt \Mul_{\Z})} = \M_{\imp, \Mul_{\Z}}$ is dissipative.
 
It is known (see \cite{LL04}) that $\Lc_\Z$ is an m-dissipative operator if the impedance coefficient $\Z (\cdot)$ is non-degenerate (i.e., if $\Z$ and $1/\Z$ are $L^\infty$-functions).
Let us show that for a wide class of degenerate impedance coefficients $\Lc_\Z$ is not m-dissipative.

For an open set $\Es$ in the Lipschitz manifold $\pa \Om$, we introduce the set $H^1_{\comp} (\Es)$ that consists of all $H^1 (\pa \Om)$-functions with the support compactly included in $\Es$. The function space $H^s_0 (\Es)$ is defined for $0<s\le 1$ as the closure of $H^1_{\comp} (\Es)$ in $H^s (\pa \Om)$. 

\begin{prop}\label{p:non-m-dis}
Let $\Es$ be an open set in the Lipschitz manifold $\pa \Om$. Assume that $\Z (\x) = 0$ for a.a. 
$\x \in \Es$. Then the Leontovich-type operator $\Lc_\Z$ is not closed and is not m-dissipative. 
\end{prop}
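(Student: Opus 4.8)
The plan is to prove the stronger assertion that $\Lc_\Z$ is not closed; the failure of m-dissipativity then follows at once, because an m-dissipative operator has $\CC_+$ in its resolvent set by Definition \ref{d:dis} and is therefore closed (Remark \ref{r:m-D2}). Recall from the paragraph preceding the proposition that $\Lc_\Z = \M_{\Gr(-\ii\wt\Mul_\Z)}$, where $\wt\Mul_\Z$ is the restriction of the multiplication operator $\Mul_\Z$ to $\dom\wt\Mul_\Z \subseteq \HH^{-1/2}(\curl_{\pa \Om})$; in particular $\dom\wt\Mul_\Z \subseteq \LLt$. By Remark \ref{r:AThvsTh}, the operator $\M_\Th$ is closed exactly when the linear relation $\Th = \Gr(-\ii\wt\Mul_\Z)$ is closed in $\HH^{-1/2}(\curl_{\pa \Om}) \oplus \HH^{-1/2}(\Div_{\pa \Om})$, so it suffices to show that $\Th$ is not closed.

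The mechanism I would exploit is that gradient fields supported in $\Es$ automatically satisfy the boundary condition (since $\Z$ vanishes there), yet can fail to lie in $\LLt$. The crux is to produce a scalar function $\varphi \in H^{1/2}_0(\Es)$ with $\varphi \notin H^1(\pa \Om)$. This existence statement is the main obstacle of the proof; everything else is bookkeeping within the trace-space and Hodge framework already established. To settle it, I would pass to a Lipschitz coordinate patch contained in the nonempty open set $\Es$ and use bi-Lipschitz invariance of $H^s$ for $s \in [0,1]$ to reduce to a ball in $\RR^2$. There the set-theoretic inclusion $H^{1/2}_0 \subseteq H^1$ cannot hold: if it did, the closed graph theorem would make the inclusion bounded, contradicting the elementary fact that the $H^1$-norm is not controlled by the $H^{1/2}$-norm on $H^1_{\comp}$ (oscillating test functions break any such bound). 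Hence $H^{1/2}_0(\Es) \not\subseteq H^1(\pa \Om)$, and I fix such a $\varphi$ together with a sequence $\varphi_n \in H^1_{\comp}(\Es)$ with $\varphi_n \to \varphi$ in $H^{1/2}(\pa \Om)$.

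Now set $\uu_n := \grad_{\pa \Om}\varphi_n$ and $\uu := \grad_{\pa \Om}\varphi$. Since $\varphi_n \in H^1(\pa \Om)$, the equivalence of the norm $(\|\cdot\|_{L^2(\pa \Om)}^2 + \|\grad_{\pa \Om}\cdot\|_{\LLT}^2)^{1/2}$ with $\|\cdot\|_{H^1(\pa \Om)}$ gives $\uu_n \in \LLt$, while $\supp\uu_n \subseteq \supp\varphi_n$ is contained in a compact subset of $\Es$, so $\Z\uu_n = 0$ a.e.; consequently $\uu_n \in \dom\wt\Mul_\Z$ with $\wt\Mul_\Z\uu_n = 0$, that is, $\{\uu_n, 0\} \in \Th$. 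By the Hodge decomposition (\ref{e:HDcurl}), $\grad_{\pa \Om}$ maps $H^{1/2}(\pa \Om)$ continuously into $\HH^{-1/2}(\curl_{\pa \Om})$ (homeomorphically onto the closed subspace $\grad_{\pa \Om}H^{1/2}_{\De_{\pa \Om}}$), hence $\uu_n \to \uu$ in $\HH^{-1/2}(\curl_{\pa \Om})$ and $\{\uu_n, 0\} \to \{\uu, 0\}$ in $\HH^{-1/2}(\curl_{\pa \Om}) \oplus \HH^{-1/2}(\Div_{\pa \Om})$. Finally, $\uu = \grad_{\pa \Om}\varphi \notin \LLt$: were $\uu \in \LLt$, the same norm equivalence would force $\varphi \in H^1(\pa \Om)$, contrary to the choice of $\varphi$. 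Since $\dom\wt\Mul_\Z \subseteq \LLt$, we conclude $\{\uu, 0\} \notin \Th$, so $\Th$ is not closed.

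Putting these together, $\Lc_\Z = \M_\Th$ is not closed by Remark \ref{r:AThvsTh}, and therefore not m-dissipative. As noted, the only genuinely nontrivial step is the construction of $\varphi \in H^{1/2}_0(\Es) \setminus H^1(\pa \Om)$, which is precisely where the openness and nonemptiness of $\Es$ enter; once $\varphi$ is in hand, the approximation $\grad_{\pa \Om}\varphi_n \to \grad_{\pa \Om}\varphi$ drives the argument.
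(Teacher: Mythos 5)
Your proof is correct and follows essentially the same route as the paper: both arguments exhibit $\grad_{\pa\Om}\varphi$ with $\varphi\in H^{1/2}_0(\Es)\setminus H^1(\pa\Om)$ as a limit of elements of $\Gr(-\ii\wt\Mul_\Z)$ (equivalently, of $\ker\wt\Mul_\Z$) lying outside $\LLt\supseteq\dom\wt\Mul_\Z$, and then invoke Remark \ref{r:AThvsTh}. The only difference is that you also supply a (correct) closed-graph/localization justification of the nontrivial inclusion failure $H^{1/2}_0(\Es)\not\subseteq H^1(\pa\Om)$, which the paper asserts without proof.
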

\begin{proof}
We prove that $\ker \wt \Mul_{\Z}$ is not closed in $\HH^{-1/2} (\curl_{\pa \Om})$.  
Using the Hodge decompositions (\ref{e:HDL22}) and 
(\ref{e:HDcurl}),
one sees that $\ker \wt \Mul_{\Z}$ contains $ \grad_{\pa \Om} H^1_{\comp} (\Es) $. It follows from (\ref{e:HDcurl}) that the closure of $ \grad_{\pa \Om} H^1_{\comp} (\Es) $ in $\HH^{-1/2} (\curl_{\pa \Om})$ is  $\grad_{\pa \Om} H^{1/2}_0 (\Es)$. 
Using (\ref{e:HDL22}) and $ H^{1/2}_0 (\Es) \setminus H^1 (\pa \Om) \neq \emptyset$, we get  $\left(\grad_{\pa \Om} H^{1/2}_0 (\Es) \right)\setminus \LLt \neq \emptyset$. 
The definition of $\wt \Mul_{\Z}$ gives  $\ker \wt \Mul_{\Z} \subset \LLt$.
Thus, $\ker \wt \Mul_{\Z}$  is not closed in $\HH^{-1/2} (\curl_{\pa \Om})$. Hence, the operator $(-\ii)\wt \Mul_{\Z} $ is not closed and so its graph $\Gr (-\ii \wt \Mul_{\Z})$ is not closed.
The equality $\Lc_\Z = \M_{\Gr (-\ii \wt \Mul_{\Z})}$ and Remark \ref{r:AThvsTh} imply that  $\Lc_\Z$ is not closed and is not m-dissipative.
\end{proof}

In the next section, we consider specific m-dissipative extensions of 
Leontovich-type operators $\Lc_\Z$ with degenerate nonnegative impedance coefficients $\Z$.

\section{Discussion: K/F-extensions,  randomization, and 
fat fractals}
\label{s:discussion}

Let us compare various approached to the study of  the mix of the conservative condition $\n \times \E =0$ on a part $\pOmp$ of $\pa \Om$ and  a Leontovich condition on $\pOma = \pa \Om \setminus \pOmp$.

Kapitonov \cite{K94}  extends the  boundary mapping  
$\Z (\x) \pi_{\top} \uph_{C^1 (\overline{\Om})} \H (\x)+ \ga_{\top} \uph_{C^1 (\overline{\Om})} \E (\x)$ defined originally on $(C^1 (\overline{\Om}))^2$ to a continuous mapping $\ka: (\HH (\curlm,\Om))^2 \to \HH^{-1/2}(\pa \Om)$ and showed that the boundary condition $\ka (\{\E,\H\}) = 0$ is m-dissipative under the assumptions that  $\pa \Om$ is of $C^\infty$-regularity, $\Z (\cdot) \in C^1 (\pa \Om)$, and $\re \Z (\x) \ge 0$ a.e. in $\pa \Om$. 

The condition $\n \times \E =0$ on $\x \in \pOmp$ corresponding to $\Z (\x)=0$ is combined often with the assumption $0<c_1 \le \Z (\x) \le c_2$
of the type of \cite{LL04} but imposed only on $\pOma$ instead of whole $\pa \Om$. The well-posedness of  Maxwell systems with  such mixed boundary conditions was addressed for polyhedral domains $\Om$ in \cite{M03,ACL17} (see also references therein)  
assuming that the impedance coefficient $\Z(\cdot)$ and the part $\pOma$ of $\pa \Om$ possess certain good enough properties, e.g., in \cite[Sections 5.1.2 and  7]{ACL17} it is assumed that  $\Z$ is a positive constant on $\pOma$ and 
 $\pOma$ has a piecewise smooth boundary and trivial topology. 

Below we give several examples, where $\Z (\cdot)$, $\pOmp$, and $\pOma$ do not fit to the above assumptions, but we are able to associate with corresponding Leontovich-type boundary conditions  specific m-dissipative Maxwell operators $\wh M$ using Theorem \ref{t:Kz} and special extensions of the boundary operator  $\Sdiv \Mul_\Z \Sdiv$ (the m-dissipativity then immediately implies the well-posedness of the evolution Maxwell system $\pa_t \mathbf{\Psi} = - \ii \wh M \mathbf{\Psi}$).

Let $\chi_\Es$ be the indicator function of $\Es$, i.e.  $\chi_\Es (\x) = 1$ if $\x \in \Es$, and $\chi_\Es (\x) = 0$ if $\x \not \in \Es$.  For an $\RR$-valued function $f$, let 
$[f (\x)]_+ := f (\x) \chi_{\{\x:f(\x)> 0\}} (\x)$.

\begin{ex}[fat-fractal boundary impedance] \label{ex:FF}
Let $\Ff$ be a certain ``fat fractal'' set  \cite{UF85} on the boundary $\pa \Om$ of the 3-D cube $ \Om = [0,1]^3$, i.e., $\Ff \subset \pa \Om$ is a fractal-like structure having positive surface measure. Assume also that $\pa \Om \setminus \Ff $ has a nonempty interior as a subset of the manifold $\pa \Om$. For example, to be concrete, we can denote by $\Ff_0 \subset [0,1]^2$ the Wallis sieve \cite{R93} and assume that  
$\Ff $ is the union of 6 copies of $\Ff_0$ placed to each of the facets of $ \Om = [0,1]^3$.
Then the Leontovich-type operator $\Lc_{\chi_\Ff}$ with the impedance coefficient $\Z (\x) =  \chi_\Ff (\x) $, $\x \in \pa \Om$, is dissipative, but, by Proposition \ref{p:non-m-dis}, is  not m-dissipative.
\end{ex}

\begin{ex}[randomized mixed boundary condition] \label{ex:random}
Let $(\bbOmega,\FF,\PP)$ be a probability space. We  introduce on the Lipschitz manifold $\pa \Om$ an $\RR$-valued random field $f_\om (\x) = f (\x,\om)$ with the property that $f_\om (\cdot) \in L^2 (\pa \Om) \ominus \KK_0$ almost surely (i.e., for a.a. $\om \in \bbOmega$ w.r.t. the measure $\PP$) using the following construction.
Recall that $b_0 = b_0 (\pa \Om) \in \NN $ is  the dimensionality of the space $\KK_0$ of locally constant functions, and let $\{v_{0,k}\}_{k=1}^{b_0} \cup \{v_k\}_{k=1}^{\infty}$ be an orthonormal basis in $L^2 (\pa \Om)$ composed of real-valued eigenfunctions of the Laplace-Beltrami operator $\De^{\pa \Om}$ in such a way that 
$\De^{\pa \Om} v_{0,k} = 0$ for $1 \le k \le b_0 $ and 
$\De^{\pa \Om} v_k = - \la_k^2 v_k$ for all $\la_k$, $k \in \NN$, defined as in (\ref{e:basisDe}).
Let $ \{\xi_k\}_{j\in \NN}$ be a sequence of independent centered random variables with finite variances $\Var (\xi_j)$ satisfying $0<\sum_{j\in \NN} \Var (\xi_j) < \infty$. We construct a random field with desired properties  
taking $ f_{\om} (\x) = 
\sum_{k \in \NN} \xi_j v_k (\x) $,
where the series is a.s.  
strongly convergent in $L^2 (\pa \Om)$ due to \cite[Theorem 3.2]{K93}. For modeling of the leakage of energy to an uncertain outer medium $\RR^3 \setminus \Om$, let us introduce the random impedance coefficient
$\Z_\om  := [f_{\om} (\x)]_+$. Then a dissipative Leontovich-type operator $\Lc_{\Z_\om}$ is defined for a.a. $\om \in \bbOmega$.
In the particular case of (nontrivial)
cut‐off fractional Gaussian fields $f_{\om} $ on $C^\infty$-manifold $\pa \Om$ \cite{R19}  
$f_{\om} (\cdot)$ is a.s. smooth,  an so the a.s. orthogonality to $\KK_0$ and Proposition \ref{p:non-m-dis} imply that 
$\Lc_{\Z_\om}$ is a.s. non-m-dissipative. Thus, an applied use of  $\Z_\om  := [f_{\om} (\x)]_+$ require a construction of m-dissipative extensions of $\Lc_{\Z_\om}$. 
\end{ex}

We conjecture that 
$\Lc_{\Z_\om}$ is non-m-dissipative with positive probability 
for arbitrary Lipschitz domain $\Om$ and arbitrary $ f_{\om}  = 
\sum_{k \in \NN} \xi_j v_k  $ with $\{\xi_k\}_{j\in \NN}$ satisfying the above conditions.
Indeed, Theorem \ref{t:CritGIMC} shifts the difficulty of the question of m-dissipativity of $\Lc_{[f_{\om}]_+}$ to the selfadjointness of nonnegative boundary operator $\Sdiv\Mul_{\Z} \Sdiv$ with $\Z = [f_{\om}]_+$, which requires an additional study. However, since $f_\om (\x)$ has a.s. zero average on every connected component $[\pa \Om]_j$ of $\pa \Om$, the zero set
of $\Z_\om (\cdot) = [f_{\om} (\cdot)]_+$ has a.s. positive measure on each $[\pa \Om]_j$,
and so is degenerate from points of view of  the Lax–Milgram theorem and of Proposition  \ref{p:non-m-dis}. Therefore, it is difficult to expect that  $\Lc_{[f_{\om}]_+}$ is m-dissipative with positive probability.

Let us show how to apply our approach to the construction of 
m-dissipative Maxwell operators associated with the impedance conditions of Examples \ref{ex:FF}-\ref{ex:random}. 
For $\Lc_{\chi_\Ff}$ and for $\Lc_{[f_{\om}]_+}$, all m-dissipative extensions are theoretically described by  Theorem \ref{t:Kz}. However, the applied modeling requires  concrete m-dissipative operators. Operator theoretical results \cite{Kato13,CdS78} allow us to construct concrete m-dissipative Maxwell operators using either the
 Friedrichs extension $[\Sdiv\Mul_{\Z} \Sdiv]_\Fr$, or the Krein-von Neumann extension $[\Sdiv\Mul_{\Z} \Sdiv]_\KN$ of the nonegative boundary operator $\Sdiv \Mul_{\Z} \Sdiv$ in $\LLt$. 

We consider here a rigorous procedure of the application of Friedrichs and Krein-von Neumann extensions to the abstract settings of Section \ref{s:I-T}. Let a dissipative extension $\wt M$ of the  abstract symmetric Maxwell operator $M$ be defined via the condition $ \ii Z \Ga_0 \psi + \Ga_1 \psi = 0 $ with a nonnegative impedance-type operator $Z:\dom Z \subseteq \Hs_{-,+} \to \Hs_{+,-}$, i.e., $\wt M = M_{\Gr (-\ii Z)}$. Let $V$ be a certain fixed linear homeomorphism from $\Hs$ to $\Hs_{-,+}$ (for Maxwell operators we use $V=\Upi^{-1}$). 
Then $V^\# Z V$ is a nonnegative operator (possibly nondensely defined) in the pivot space $\Hs$. Its graph 
$\Psi := \Gr (V^\# Z V)$ is a nonnegative linear relation in $\Hs$. The class of selfadjoint nonnegative linear relations in $\Hs$ that are extensions of $\Psi$ can be described as a closed interval w.r.t. the partial ordering  associated with corresponding quadratic forms \cite{DM95,HMS04,BHdS20}. 
The Friedrichs extension $[\Psi]_\Fr$ and the Krein-von Neumann extension $[\Psi]_{\KN}$ of $\Psi$ \cite{CdS78} are the greatest and, resp., the smallest elements of this interval.
 So $[\Psi]_{\Fr(\KN)}$  are nonnegative selfadjoint linear relations in $\Hs$. Then $\big(\begin{smallmatrix}
   1 & 0 \\
0 &  -\ii  
\end{smallmatrix}\big) [\Psi]_{\Fr (\KN)} $ are m-dissipative linear relations in $\Hs$ containing $\Gr (-\ii V^\# Z V)$,
see Remark \ref{r:I-T}. 
 
\begin{defn} \label{d:FK} 
If $\wh M $ is a restriction of $M^*$ defined by the condition 
\begin{gather} \label{e:K+IGa5}
(I_{\Hs}+K) V^{-1} \Ga_0 \psi  - \ii (I_{\Hs}-K) V^\#  \Ga_1 \psi = 0 
\end{gather}
with the operator $K$ equal to the Cayley transform of $\big(\begin{smallmatrix}
   1 & 0 \\
0 &  -\ii  
\end{smallmatrix}\big) [\Psi]_{\Fr } $ (of $\big(\begin{smallmatrix}
   1 & 0 \\
0 &  -\ii  
\end{smallmatrix}\big) [\Psi]_{\KN} $), we shall say that $\wh M$ is 
\emph{the F-extension  of} $\wt M$ (resp., \emph{the K-extension  of} $\wt M$) associated with the m-boundary tuple $(\Hs_{-,+},\Hs,  \Ga_0 , \Ga_1)$.
\end{defn}

The F- and K- extensions of  $\wt M$ are well-defined. Indeed, they can be written as 
$M_\Th$ with $\big(\begin{smallmatrix}
   1 & 0 \\
0 &  \ii  
\end{smallmatrix}\big)\Th = \left(\begin{smallmatrix} V & 0 \\
0 & (V^\#)^{-1} \end{smallmatrix} \right) [\Psi]_{\Fr(\KN)}$  and do not depend on the choice of $V$ (they do depend on the choice the m-boundary tuple $(\Hs_{-,+},\Hs,  \Ga_0 , \Ga_1)$). Theorem \ref{t:I-Text} shows that these F- and K- extensions are indeed extensions of 
$\wt M$ and proves also the following.

\begin{cor} \label{c:FK}
 The F-extension and  K-extension of $\wt M$ are m-dissipative  operators. 
\end{cor}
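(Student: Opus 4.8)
The plan is to derive Corollary \ref{c:FK} directly from Theorem \ref{t:I-Text}, so the task reduces to checking that the contraction $K$ prescribed in Definition \ref{d:FK} satisfies the hypothesis of part (iii) of that theorem. First I would record what the classical extension theory supplies: since $Z$ (equivalently $\wh Z$) is nonnegative, the transported operator $V^\# Z V$ is nonnegative in the pivot space $\Hs$, so $\Psi = \Gr (V^\# Z V)$ is a nonnegative linear relation in $\Hs$. By the theory of nonnegative selfadjoint extensions of nonnegative relations \cite{CdS78,HMS04,BHdS20}, both $[\Psi]_\Fr$ and $[\Psi]_\KN$ are nonnegative selfadjoint linear relations in $\Hs$ extending $\Psi$; in particular each of them is maximal accretive.

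Next I would pass from the accretive to the dissipative picture. As already observed before Definition \ref{d:FK} via Remark \ref{r:I-T}, applying $\big(\begin{smallmatrix} 1 & 0 \\ 0 & -\ii \end{smallmatrix}\big)$ turns a nonnegative selfadjoint relation into a maximal dissipative one, so $\big(\begin{smallmatrix} 1 & 0 \\ 0 & -\ii \end{smallmatrix}\big)[\Psi]_{\Fr}$ and $\big(\begin{smallmatrix} 1 & 0 \\ 0 & -\ii \end{smallmatrix}\big)[\Psi]_{\KN}$ are maximal dissipative linear relations in $\Hs$, each containing $\big(\begin{smallmatrix} 1 & 0 \\ 0 & -\ii \end{smallmatrix}\big)\Psi = \Gr (-\ii V^\# Z V)$ (the inclusion is preserved because $\Psi \subseteq [\Psi]_{\Fr(\KN)}$ and the operation $\big(\begin{smallmatrix} 1 & 0 \\ 0 & -\ii \end{smallmatrix}\big)$ is order preserving).

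Then I would convert these relations to contractions through the Cayley transform. By (\ref{e:f=hj-hj}) the Cayley transform of a maximal dissipative relation is defined on all of $\Hs$, so the operator $K$ of Definition \ref{d:FK} is a contraction on $\Hs$. Moreover, the monotonicity of the Cayley correspondence established in the proof of Lemma \ref{l:OpExt} (viii), namely $\Xi_1 \subseteq \Xi_2$ iff $\Gr \Cay_{\Xi_1} \subseteq \Gr \Cay_{\Xi_2}$, turns the inclusion $\big(\begin{smallmatrix} 1 & 0 \\ 0 & -\ii \end{smallmatrix}\big)[\Psi]_{\Fr(\KN)} \supseteq \Gr (-\ii V^\# Z V)$ into $\Gr K \supseteq \Gr \Cay_{\Gr (-\ii V^\# Z V)}$. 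This is precisely condition (iii) of Theorem \ref{t:I-Text}, whence the restriction $\wh M$ of $M^*$ defined by (\ref{e:K+IGa5}) is an m-dissipative extension of $\wt M = M_{\Gr (-\ii Z)}$; applying this to both $[\Psi]_\Fr$ and $[\Psi]_\KN$ yields the claim.

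The argument is essentially bookkeeping, the analytic content sitting in Theorem \ref{t:I-Text} and in the cited construction of $[\Psi]_\Fr$ and $[\Psi]_\KN$. The one point I would watch is the nondensely defined case: when $V^\# Z V$ is not densely defined, $[\Psi]_\Fr$ and possibly $[\Psi]_\KN$ are genuine multivalued selfadjoint relations rather than operators. The linear-relation framework of Section \ref{s:MBT-BT} and the cited extension theory are designed to cover exactly this, and the Cayley transform of a (possibly multivalued) maximal dissipative relation remains a contraction on all of $\Hs$ by (\ref{e:f=hj-hj}), so no step of the plan degrades.
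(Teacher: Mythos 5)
Your proposal is correct and follows essentially the same route as the paper: the paper likewise obtains Corollary \ref{c:FK} by observing that $\big(\begin{smallmatrix} 1 & 0 \\ 0 & -\ii \end{smallmatrix}\big)[\Psi]_{\Fr(\KN)}$ are maximal dissipative relations containing $\Gr(-\ii V^\# Z V)$ and then invoking Theorem \ref{t:I-Text}. Your additional bookkeeping on the Cayley-transform monotonicity and the nondensely defined case is consistent with the paper's framework.
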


The F-extension and  the K-extension coincide if and only if $[\Psi]_\Fr = [\Psi]_{\KN}$. 
This equality can be checked with the use of the generalized Krein uniqueness criterion \cite[Theorem 4.14]{HMS04} 
which however reduces the question to the spectral analysis of the boundary operator $V^\# Z V$.

For introduced in Section \ref{s:GIBCa} abstract Leontovich-type operators $M_{\imp,\wh Z}$ with nonnegative $\wh Z:\dom \wh Z \subseteq \Hs \to \Hs$, the F- and K-extensions can be constructed by application of the above procedure to the nonnegative impedance-type operator 
$Z:\dom Z \subseteq \Hs_{-,+} \to \Hs_{+,-}$ defined by (\ref{e:domZ}). This means that we start from Friedrichs and Krein-von Neumann extensions of the operator $\Soin \wh Z \Soin$, see Theorem \ref{t:aIBCm-d}.
Thus, we can define the  F-extension $\Lc_{\Z,\Fr}$ and the K-extension $\Lc_{\Z,\KN}$ for arbitrary Leontovich-type operator $\Lc_\Z$ and, in particular, for the ``fat-fractal'' operator 
$\Lc_{\chi_\Ff}$ of Example \ref{ex:FF}. Since F- and K-extensions are m-dissipative, in this way we associate with an arbitrary measurable impedance coefficient $\Z : \pa \Om \to \ii \overline{\CC}_- $ two concrete Maxwell contraction semigroups (possibly coinciding).   

In the case of randomized impedance coefficient $\Z_\om$ of Example \ref{ex:random},
$\Lc_{\Z_\om,\Fr}$ and $\Lc_{\Z_\om,\KN}$ are operator-valued functions on the probability space $\bbOmega$ taking a.s. m-dissipative values. 

The above procedure can be generalized to the case of sectorial $\wh Z$ by application of the results of \cite{Kato13} and \cite{A00}.

\vspace{2ex}
\noindent
\emph{Acknowledgments.}  IK is grateful to the VolkswagenStiftung project   "From Modeling and Analysis to Approximation" for the financial support of his participation in the workshop ``Analytical Modeling and Approximation Methods'',  Humboldt-Universität Berlin, 4-8.03.2020.

\vspace{2ex}
\begingroup
\small
\renewcommand{\section}[2]{}

\endgroup
\end{document}